\newcommand{\rank}{\mathop{\rm rank}\nolimits}
\newcommand{\numberset}{\mathbb}
\newcommand{\Z}{\numberset{Z}}
\def\NN{{\mathbb N}}
\def\ZZ{{\mathbb Z}}
\newcommand{\lt}{\mathop{\rm in}\nolimits}
\newcommand{\cB}{\mathcal{B}}
\newcommand{\cC}{\mathcal{C}}
\newcommand{\cP}{\mathcal{P}}
\newcommand{\cH}{\mathcal{H}}
\newcommand{\cI}{\mathcal{I}}
\newcommand{\cK}{\mathcal{K}}
\newcommand{\cF}{\mathcal{F}}
\newcommand{\cQ}{\mathcal{Q}}
\newcommand{\cW}{\mathcal{W}}
\newcommand{\cR}{\mathcal{R}}
\newcommand{\cS}{\mathcal{S}}
\newcommand{\cT}{\mathcal{T}}
\newcommand{\cL}{\mathcal{L}}
\newcommand{\cG}{\mathcal{G}}
\newcommand{\rHP}{\mathrm{HP}}
\theoremstyle{plain}
\theoremstyle{theorem}
\newtheorem{defn}{Definition}[section]
\newtheorem{prop}[defn]{Proposition}
\newtheorem{thm}[defn]{Theorem}
\newtheorem{lemma}[defn]{Lemma}
\newtheorem{conj}[defn]{Conjecture}
\newtheorem{coro}[defn]{Corollary}
\newtheorem{exa}[defn]{Example}
\newtheorem{rmk}[defn]{Remark}
\newtheorem{disc}[defn]{Discussion}
\theoremstyle{remark}
\begin{document}

	\title[Shellable simplicial complex and rook polynomial of frame polyominoes]{Shellable simplicial complex and switching rook polynomial of frame polyominoes}
	    \author{RIZWAN JAHANGIR}
			\address{Sabanci University, Faculty of Engineering and Natural Sciences, Orta Mahalle, Tuzla 34956, Istanbul, Turkey}
			\email{rizwan@sabanciuniv.edu}
   
			\author{FRANCESCO NAVARRA}
			\address{Università di Messina, Dipartimento di Scienze Matematiche e Informatiche, Scienze Fisiche e Scienze della Terra\\
			Viale Ferdinando Stagno D'Alcontres 31\\
			98166 Messina, Italy}
			\email{francesco.navarra@unime.it}

	\keywords{Polyominoes, Simplicial complex, Hilbert series, rook-polynomial.}
	
	\subjclass[2010]{05B50, 05E40}

	\begin{abstract}
    Let $\cP$ be a frame polyomino, a new kind of non-simple polyomino. In this paper we study the $h$-polynomial of $K[\cP]$ in terms of the switching rook polynomial of $\cP$ using the shellable simplicial complex $\Delta(\cP)$ attached to $\cP$. We provide a suitable shelling order for $\Delta(\cP)$ and we define a bijection between the set of the canonical configuration of $j$ rooks in $\cP$ and the facets of $\Delta(\cP)$ with $j$ steps. Finally we use a well-known combinatorial result, due to McMullen and Walkup, about the $h$-vector of a shellable simplicial complex to interpret the $h$-polynomial of $K[\cP]$ as the switching rook polynomial of $\cP$. 		
 	\end{abstract}

	\maketitle
	
	\section{Introduction}
	
     \noindent A central topic in Combinatorial Commutative Algebra is the study of the ideals of $t$-minors of an $m \times n$ matrix of indeterminates, for any integer 
	$1\leq t\leq \min\{m,n\}$. The most known references are \cite{conca1}, \cite{conca2}, \cite{conca3}, \cite{gorla} for the ideals
	generated by all $t$-minors of a one or two-sided ladder, \cite{adiajent1}, \cite{adjent 2}, \cite{adiajent3} for the ideals of adjacent 2-minors and \cite{2.n} for the ideals generated by an arbitrary set of $2$-minors in a $2\times n$ matrix.  In \cite{Qureshi} A.A. Qureshi defines a new class of binomial ideals that arises from $2$-minors of a polyomino, generalizing the class of the ideals generated by 2-minors of $m\times n$ matrices.\\
    A polyomino is a finite collection of unitary squares joined edge by edge. This term was coined by Golomb in 1953 for the first time and several combinatorial aspects of polyominoes are discussed in his monograph \cite{golomb}. If $\cP$ is a polyomino, then the \textit{polyomino ideal} of $\cP$ is the ideal generated by all inner 2-minors of $\cP$ in a suitable polynomial ring $S_\cP$ and it is denoted by $I_{\cP}$. The study of the main algebraic properties of the quotient ring $K[\cP]=S_\cP/I_{\cP}$, depending on the shape of $\cP$, is the purpose of this research. The primality of the polyomino ideal is studied in many articles (see \cite{Cisto_Navarra_closed_path}, \cite{Cisto_Navarra_weakly}, \cite{def balanced}, \cite{Not simple with localization}, \cite{Trento}, \cite{Trento2}, \cite{Simple are prime}) but a complete characterization for the prime polyomino ideals is still unknown, as well as for the Cohen-Macaulayness and the normality of $K[\cP]$ (\cite{Cisto_Navarra_CM_closed_path}) or for the K\"onig type property of $I_{\cP}$ (see \cite{Dinu_Navarra_Konig}, \cite{Hibi - Herzog Konig type polyomino}). Consult \cite{Andrei}, \cite{Cisto_Navarra_Veer}, \cite{Herzog rioluzioni lineari}, \cite{Kummini CD} for other interesting algebraic properties of the polyomino ideal. \\
    Recently a new line of research has been developed, regarding the study of the Hilbert-Poincar\'e series and the Castelnuovo-Mumford regularity of $K[\cP]$ in terms of the rook polynomial of $\cP$ and its invariants. The rook polynomial of a polyomino $\cP$ is well-known in literature (see \cite{Kapl-Riordan} and \cite[Chapter 7]{Riordan}). It is a polynomial $r_{\cP}(t)=\sum_{i=0}^n r_i t^i\in \ZZ[t]$, whose coefficient $r_i$ represents the number of distinct possibilities of arranging $i$ rooks on cells of $\cP$ in non-attacking positions (with the convention $r_0=1$); in particular, the degree of $r_{\cP}(t)$ is the rook number of $\cP$ which is the maximum number of rooks which can be arranged in $\cP$ in non-attacking positions. In \cite{L-convessi} the authors show that if $\cP$ is an $L$-convex polyomino then Castelnuovo-Mumford regularity of $K[\cP]$ is equal to the rook number of $\cP$. In \cite{Trento3} it is proved that for a simple thin polyomino the $h$-polynomial $h(t)$ of $K[\cP]$ is the rook polynomial of $\cP$ and the Gorenstein simple thin polyominoes are also characterized using the so-called $S$-property. Roughly speaking a polyomino is simple if it has no hole and it is thin if it does not contain the square tetromino. Finally it is conjectured that $r_{\cP}(t)=h(t)$ if and only if $\cP$ is thin (\cite[Conjecture 4.5]{Trento3}). Similar results are obtained in \cite{Cisto_Navarra_Hilbert_series} for a particular class of non-simple thin polyominoes, called closed paths, providing also partial support for that conjecture. In \cite{Kummini rook polynomial} the authors prove that for a convex polyomino $\cP$ whose vertex set is a sub-lattice of $\NN^2$ we have $h(t)\neq r_{\cP}(t)$ if $\cP$ is not thin. In \cite{Parallelogram Hilbert series} the authors devote their study to the Hilbert-Poincar\'e series of simple polyominoes which could be also non-thin. They generalize the rook polynomial of $\cP$, introducing a particular equivalence relation $\sim$ on the rook complex and a new polynomial $\tilde{r}_{\cP}(t)=\sum_{i=0}^n \tilde{r}_it^i$, which we call in this work \textit{switching rook polynomial} of $\cP$, where $\tilde{r}_i$ is the number of the equivalence classes with respect to $\sim$ of $i$ non-attacking rooks of $\cP$. In particular, they prove that $h(t)=\tilde{r}_{\cP}(t)$ for a parallelogram polyomino $\cP$ and, by a computational method, also for all simple polyominoes having rank at most eleven. Finally, they conjecture that the latter holds for all simple polyominoes (\cite[Conjecture 3.2]{Parallelogram Hilbert series}) and they provide a characterization of the parallelogram polyominoes whose coordinate ring is Gorenstein.\\
    Inspired by the above-mentioned results and conjectures, we study the Hilbert-Poincar\'e series of the coordinate ring of some non-simple polyominoes, providing a demonstrative technique whose key ingredient is the well-known result of McMullen and Walkup about the $h$-vector of a shellable simplicial complex (\cite[Corollary 5.1.14]{Bruns_Herzog}). This paper is arranged as follows. In Section 2 we recall some notions on polyominoes and we introduce the polyomino ideal following \cite{Qureshi}. In Section 3 we define the class of \textit{frame polyominoes}. Roughly speaking, a frame polyomino $\cP$ is a non-simple polyomino obtained by removing a parallelogram polyomino from a rectangle polyomino. Some algebraic properties of the $K$-algebras associated with a larger class of this kind of polyominoes are studied in \cite{Not simple with localization} and \cite{Shikama}, where the authors prove that the associated coordinate ring is a normal Cohen-Macaulay domain but without computing the Krull dimension. Let $\cP$ be a frame polyomino. In this section, we prove firstly that the Krull dimension of $K[\cP]$ is equal to the difference between the number of the vertices of $\cP$ and the number of the cells of $\cP$. After that, we start a deep study of the simplicial complex $\Delta(\cP)$ associated with the initial ideal of $I_{\cP}$ with respect to a suitable monomial order on $S_{\cP}$. We introduce a new combinatorial notion for a face of a simplicial complex attached to a polyomino, called a \textit{step}, and in Discussion \ref{Discussion: facet} we show in detail how a facet with $j$ steps of $\Delta(\cP)$ is nicely displayed in a frame polyomino $\cP$. Finally we show that the descending lexicographical order of the facets of $\Delta(\cP)$  forms a shelling order for $\Delta(\cP)$ and we reinterpret the theorem of McMullen-Walkup showing that the $i$-th coefficient of the $h$-polynomial of $K[\cP]$ is equal to the number of the facets of $\Delta(\cP)$ having $i$ steps (see Theorem \ref{Proposition: The facet less the lower right corner generates the intersection}).\\  
    In Section 4 we present the main result of this paper, which states that the $h$-polynomial of $K[\cP]$ is the switching rook polynomial of $\cP$ (see Theorem  \ref{mainthm}). At first, we recall some notions about the switching rook polynomial of a polyomino. After that, we show that every configuration of $j$ rooks in non-attacking position in $\cP$ can lead to a canonical configuration in $\cP$ and, in particular, we find a one-to-one correspondence between the facets with $j$ steps of $\Delta(\cP)$ and the canonical configurations in $\cP$ of $j$ rooks (see Theorem \ref{Theorem: bijection between facets and canonical positions}). The latter joined with the Theorem \ref{Proposition: The facet less the lower right corner generates the intersection}, allows us to prove our main goal. 
    We conclude the work showing that the Castelnuovo-Mumford regularity of $K[\cP]$ is the rook number of $\cP$ in Corollary \ref{Coro: rook number} and conjecturing that the main result of this paper holds for every polyomino (see Conjecture \ref{Conj}).

	\section{Polyominoes and polyomino ideals}\label{Section: Introduction}
	
	\noindent Let $(i,j),(k,l)\in \Z^2$. We say that $(i,j)\leq(k,l)$ if $i\leq k$ and $j\leq l$. Consider $a=(i,j)$ and $b=(k,l)$ in $\Z^2$ with $a\leq b$. The set $[a,b]=\{(m,n)\in \Z^2: i\leq m\leq k,\ j\leq n\leq l \}$ is called an \textit{interval} of $\Z^2$. 
	In addition, if $i< k$ and $j<l$ then $[a,b]$ is a \textit{proper} interval. In such a case we say $a, b$ the \textit{diagonal corners} of $[a,b]$ and $c=(i,l)$, $d=(k,j)$ the \textit{anti-diagonal corners} of $[a,b]$. If $j=l$ (or $i=k$) then $a$ and $b$ are in a \textit{horizontal} (or \textit{vertical}) \textit{position}. We denote by $]a,b[$ the set $\{(m,n)\in \Z^2: i< m< k,\ j< n< l\}$. A proper interval $C=[a,b]$ with $b=a+(1,1)$ is called a \textit{cell} of $\ZZ^2$; moreover, the elements $a$, $b$, $c$ and $d$ are called respectively the \textit{lower left}, \textit{upper right}, \textit{upper left} and \textit{lower right} \textit{corners} of $C$. The sets $\{a,c\}$, $\{c,b\}$, $\{b,d\}$ and $\{a,d\}$ are the \textit{edges} of $C$. We put $V(C)=\{a,b,c,d\}$ and $E(C)=\{\{a,c\},\{c,b\},\{b,d\},\{a,d\}\}$. Let $\cS$ be a non-empty collection of cells in $\Z^2$. The set of the vertices and of the edges of $\cS$ are respectively $V(\cS)=\bigcup_{C\in \cS}V(C)$ and $E(\cS)=\bigcup_{C\in \cS}E(C)$, while $\rank\cS$ is the number of cells belonging to $\cS$. If $C$ and $D$ are two distinct cells of $\cS$, then a \textit{walk} from $C$ to $D$ in $\cS$ is a sequence $\cC:C=C_1,\dots,C_m=D$ of cells of $\ZZ^2$ such that $C_i \cap C_{i+1}$ is an edge of $C_i$ and $C_{i+1}$ for $i=1,\dots,m-1$. In addition, if $C_i \neq C_j$ for all $i\neq j$, then $\cC$ is called a \textit{path} from $C$ to $D$. Two cells $C$ and $D$ are connected in $\cS$ if there exists a path of cells in $\cS$ from $C$ to $D$.
	A \textit{polyomino} $\cP$ is a non-empty, finite collection of cells in $\Z^2$ where any two cells of $\cP$ are connected in $\cP$. For instance, see Figure \ref{Figura: Polimino introduzione}.
	
	\begin{figure}[h]
		\centering
		\includegraphics[scale=0.6]{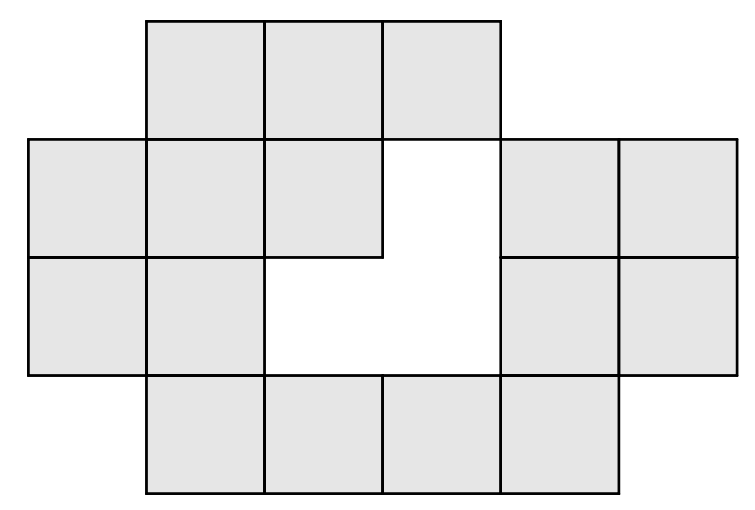}
		\caption{A polyomino.}
		\label{Figura: Polimino introduzione}
	\end{figure}
	
	\noindent A polyomino $\cP$ is \textit{simple} if for any two cells $C$ and $D$ not in $\cP$ there exists a path of cells not in $\cP$ from $C$ to $D$. A finite collection of cells $\cH$ not in $\cP$ is a \textit{hole} of $\cP$ if any two cells of $\cH$ are connected in $\cH$ and $\cH$ is maximal with respect to set inclusion. For example, the polyomino in Figure \ref{Figura: Polimino introduzione} is not simple with a hole. Obviously, each hole of $\cP$ is a simple polyomino and $\cP$ is simple if and only if it has no hole.\\
	Consider two cells $A$ and $B$ of $\Z^2$ with $a=(i,j)$ and $b=(k,l)$ as the lower left corners of $A$ and $B$ with $a\leq b$. A \textit{cell interval} $[A,B]$ is the set of the cells of $\Z^2$ with lower left corner $(r,s)$ such that $i\leqslant r\leqslant k$ and $j\leqslant s\leqslant l$. If $(i,j)$ and $(k,l)$ are in horizontal (or vertical) position, we say that the cells $A$ and $B$ are in a \textit{horizontal} (or \textit{vertical}) \textit{position}.\\
	Let $\cP$ be a polyomino. Consider two cells $A$ and $B$ of $\cP$ in a vertical or horizontal position. The cell interval $[A,B]$, containing $n>1$ cells, is called a
	\textit{block of $\cP$ of rank n} if all cells of $[A,B]$ belong to $\cP$. The cells $A$ and $B$ are called \textit{extremal} cells of $[A,B]$. Moreover, a block $\cB$ of $\cP$ is \textit{maximal} if there does not exist any block of $\cP$ which contains properly $\cB$. It is clear that an interval of $\ZZ^2$ identifies a cell interval of $\ZZ^2$ and vice versa, so we associate to an interval $I$ of $\ZZ^2$ the corresponding cell interval, which we denote by $\cP_{I}$. Note that $\cP_{I}$ is a polyomino itself. If $I=[a,b]$ and $c$, $d$ are the anti-diagonal corner of $I$, then the two cells containing $a$ or $b$ ($c$ or $d$) are called \textit{diagonal} (\textit{anti-diagonal}) cells of $\cP_I$. A proper interval $[a,b]$ is called an \textit{inner interval} of $\cP$ if all cells of $\cP_{[a,b]}$ belong to $\cP$. We denote by $\cI(\cP)$ the set of all inner intervals of $\cP$. An interval $[a,b]$ with $a=(i,j)$, $b=(k,j)$ and $i<k$ is called a \textit{horizontal edge interval} of $\cP$ if the sets $\{(\ell,j),(\ell+1,j)\}$ are edges of cells of $\cP$ for all $\ell=i,\dots,k-1$. In addition, if $\{(i-1,j),(i,j)\}$ and $\{(k,j),(k+1,j)\}$ do not belong to $E(\cP)$, then $[a,b]$ is called a \textit{maximal} horizontal edge interval of $\cP$. We define similarly a \textit{vertical edge interval} and a \textit{maximal} vertical edge interval. \\
	\noindent Let $\cP$ be a polyomino and  $S_\cP=K[x_v| v\in V(\cP)]$ be the polynomial ring of $\cP$ where $K$ is a field. If $[a,b]$ is an inner interval of $\cP$, with $a$,$b$ and $c$,$d$ respectively diagonal and anti-diagonal corners, then the binomial $x_ax_b-x_cx_d$ is called an \textit{inner 2-minor} of $\cP$. $I_{\cP}$ is called \textit{polyomino ideal} of $\cP$ and is defined as the ideal in $S_\cP$ generated by all the inner 2-minors of $\cP$. We denote by $G(\cP)$ the set of all generators of $I_{\cP}$. We set $K[\cP] = S_\cP/I_{\cP}$, which is the \textit{coordinate ring} of $\cP$. Along the paper, we work with the reverse lexicographical order $<$ on $S$ induced by the ordering of the variables defined by $x_{ij}>x_{kl}$ if $j > l$, or, $j = l$ and $i > k$. If $J$ is an ideal of $S_{\cP}$ we denote by $\lt_{<}(J)$ the initial ideal of $J$ with respect to $<$.\\ Now, we point out the conditions in order to the set of the generators of $I_{\cP}$ forms the reduced Gr\"obner basis of $I_{\cP}$ with respect to $<$.  The latter can be proved as done in \cite[Remark 4.2]{Qureshi} because, if $[p,q]$ is an inner interval of $\cP$ with anti-diagonal corners $r$ and $s$, then the initial monomial of $x_px_q-x_rx_s$ with respect to $<$ is given by $x_rx_s$.
	
	\begin{thm}
 \label{Theorem: Qureshi condition to have Groebner basis}
   	Let $\cP$ be a collection of cells. Then the
   	set of inner 2-minors of $\cP$ forms the reduced (quadratic) Gr\"obner basis with respect to $<$ if and only if for any two inner intervals $[b,a]$ and $[d,c]$ of $\cP$ with anti-diagonal corners $e, f$ and $f,g$ as shown
   	in Figure \ref{Figura: conditions for Groebner basis}, either $b, g$ or $e, c$ are anti-diagonal corners of an inner interval of $\cP$.
	\end{thm}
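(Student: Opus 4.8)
The plan is to establish this via Buchberger's criterion for the candidate generating set $G(\cP)=\{x_ax_b-x_cx_d\colon [a,b]\in\cI(\cP)\}$. The first step is to pin down the leading terms: for the inner $2$-minor attached to an inner interval $[a,b]$ with $a=(i,j)$, $b=(k,l)$, $i<k$, $j<l$, the four corner variables satisfy $x_{kl}>x_{il}>x_{kj}>x_{ij}$ for the order $<$, and since the smallest of them, $x_{ij}$, appears in the diagonal monomial $x_ax_b$ but not in the anti-diagonal monomial $x_cx_d$, the reverse lexicographic rule forces the leading monomial $\lt_<(x_ax_b-x_cx_d)$ to be $x_cx_d$. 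Hence $G(\cP)$ is a Gr\"obner basis with respect to $<$ if and only if every $S$-polynomial $S(f_I,f_J)$, $I,J\in\cI(\cP)$, reduces to $0$ modulo $G(\cP)$; moreover such a basis is automatically reduced, because two inner intervals with a common anti-diagonal corner pair coincide, and an anti-diagonal corner pair (incomparable vertices) can never equal a diagonal corner pair (comparable vertices).

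Next I would cut down the set of $S$-pairs with Buchberger's coprimality criterion: $S(f_I,f_J)$ reduces to $0$ whenever $\gcd(\lt_<(f_I),\lt_<(f_J))=1$, so only the pairs whose anti-diagonal monomials share a variable matter. Two such monomials cannot share both variables without forcing $I=J$, so they share exactly one, i.e.\ $I=[b,a]$ and $J=[d,c]$ have a single common anti-diagonal corner $f$; writing $e$ and $g$ for the remaining anti-diagonal corners of $I$ and of $J$, this is, after relabelling, the configuration drawn in Figure~\ref{Figura: conditions for Groebner basis}, the finitely many other relative positions of $f$ inside $I$ and $J$ being treated by the same method. A short computation then gives, up to normalising leading coefficients,
\[ S(f_I,f_J)\;=\;x_e\,x_cx_d\;-\;x_g\,x_ax_b. \]

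For the ``if'' direction I would show that either hypothesis forces this $S$-polynomial to reduce to $0$. Suppose $b,g$ are the anti-diagonal corners of an inner interval $K$ with diagonal corners $p,q$; then $x_bx_g=\lt_<(f_K)$, so $x_g\,x_ax_b$ reduces by $f_K$ to $x_a\,x_px_q$, leaving the remainder $x_e\,x_cx_d-x_a\,x_px_q$. Reading the coordinates off Figure~\ref{Figura: conditions for Groebner basis}, one verifies that $q$ coincides with $c$ and that $e,d$ are the anti-diagonal corners of a rectangle $L$ every cell of which lies in $I$ or in $K$; hence $L\in\cI(\cP)$, and a further reduction by $f_L$ annihilates the remainder. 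The symmetric hypothesis, that $e,c$ are the anti-diagonal corners of an inner interval, is handled by the mirror-image argument. Conversely, arguing by contraposition, if neither $\{b,g\}$ nor $\{e,c\}$ bounds an inner interval then $S(f_I,f_J)=x_e\,x_cx_d-x_g\,x_ax_b$ is a difference of two distinct monomials, neither divisible by any $\lt_<(f)$ with $f\in G(\cP)$; it is therefore already in normal form and nonzero, so $G(\cP)$ is not a Gr\"obner basis. Indeed, in $x_e\,x_cx_d$ the subset $\{c,d\}$ is the diagonal corner pair of $J$ (comparable, hence never a leading term), $\{e,c\}$ is excluded by assumption, and if $\{e,d\}$ were the anti-diagonal corner pair of an inner interval $L$ then, the rectangle on $\{b,g\}$ being covered by $L\cup J$, that rectangle would itself be an inner interval, a contradiction; the monomial $x_g\,x_ax_b$ is treated symmetrically, and the two monomials are distinct because $I$ and $J$ meet only at $f$.

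The step I expect to be the main obstacle is the case analysis inside the ``if'' direction: one has to run through each admissible position of the shared corner $f$ relative to the two rectangles and check, in every case, that the auxiliary rectangle produced along the reduction is genuinely an inner interval of $\cP$. The whole argument leans on the elementary but repeatedly-used fact that a sub-rectangle of the union of two inner intervals is again an inner interval, together with careful bookkeeping of which vertices play which corner-role in Figure~\ref{Figura: conditions for Groebner basis}.
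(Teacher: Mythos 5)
This statement is imported by the paper from \cite{Qureshi}, Remark 4.2, and no proof of it appears in the text, so there is nothing internal to compare your argument against; judged on its own, your Buchberger-criterion proof is the standard route and is essentially correct. The identification of the leading term as the anti-diagonal monomial, the reduction to S-pairs of intervals sharing exactly one anti-diagonal corner, the computation $S(f_I,f_J)=x_ex_cx_d-x_gx_ax_b$, the two-step reduction in the ``if'' direction (using that the auxiliary rectangle lies in the union of two inner intervals), and the normal-form check in the contrapositive all go through. The one place where your write-up is genuinely incomplete is the point you yourself flag: the figure depicts only the ``transversal'' configuration, in which $f$ is the lower-right corner of one interval and the upper-left corner of the other, and you must still dispose of the configurations in which $f$ plays the same corner-role in both intervals (so that the two rectangles overlap or one contains the other). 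In those cases a direct check shows the S-polynomial reduces to zero \emph{unconditionally}, using only inner intervals contained in $I\cup J$ (indeed in $I$ or in $J$ alone), which is precisely why the criterion in the theorem involves only the configuration of Figure \ref{Figura: conditions for Groebner basis}; writing out those few sub-cases would complete the proof.
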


	\begin{figure}[h]
	\centering
	\includegraphics[scale=1]{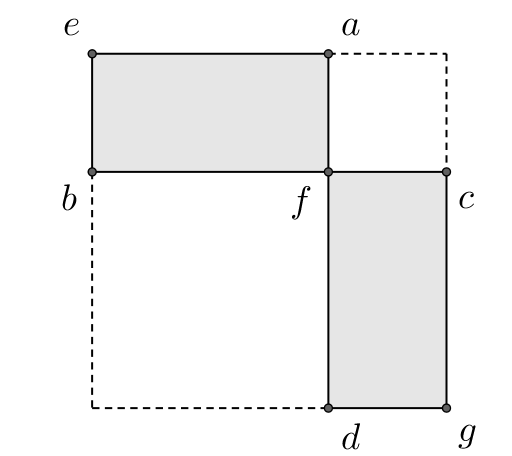}
	\caption{Conditions for Gr\"obner basis with respect to $<$.}
	\label{Figura: conditions for Groebner basis}
	\end{figure}

	\noindent We recall some notions on the Hilbert function and the Hilbert-Poincar\'{e} series of a graded $K$-algebra $R/I$. Let $R$ be a graded $K$-algebra and $I$ be an homogeneous ideal of $R$, then $R/I$ has a natural structure of graded $K$-algebra as $\bigoplus_{k\in\mathbb{N}}(R/I)_k$. The function $\mathrm{H}_{R/I}:\mathbb{N}\to \mathbb{N}$ with $\mathrm{H}_{R/I}(k)=\dim_{K} (R/I)_k$ is called the \textit{Hilbert function} of $R/I$ and the formal power series $\rHP_{R/I}(t)=\sum_{k\in\mathbb{N}}\mathrm{H}_{R/I}(k)t^k$ is defined as the \textit{Hilbert-Poincar\'e series} of $R/I$.
	It is known by Hilbert-Serre theorem that there exists a unique polynomial $h(t)\in \mathbb{Z}[t]$, called \textit{h-polynomial} of $R/I$, such that $h(1)\neq0$ and $\rHP_{R/I}(t)=\frac{h(t)}{(1-t)^d}$, where $d$ is the Krull dimension of $R/I$. Moreover, if $R/I$ is Cohen-Macaulay then $\mathrm{reg}(R/I)=\deg h(t)$. \\
	
	\noindent We recall some basic facts on simplicial complexes. A \textit{finite simplicial complex} $\Delta$ on $[n]:=\{1,\dots,n\}$ is a
	collection of subsets of $[n]$ satisfying the following two conditions:
	\begin{enumerate}
		\item if $F'\in \Delta$ and $F \subseteq F'$ then $F \in \Delta$;
		\item $\{i\}\in \Delta$ for all $i=1,\dots,n$.
	\end{enumerate}  The elements of $\Delta$ are called \textit{faces}, and the dimension of a face is one less than
	its cardinality. An \textit{edge} of $\Delta$ is a face of dimension 1, while a \textit{vertex} of $\Delta$ is a face
	of dimension 0. The maximal faces of $\Delta$ with respect to the set inclusion are called \textit{facets}. The dimension
	of $\Delta$ is given by $\max\{\dim(F):F\in \Delta\}$. A simplicial complex $\Delta$ is \textit{pure} if all facets have the same dimension. For a pure simplicial complex $\Delta$, the dimension of $\Delta$ is given trivially by the dimension of a facet of $\Delta$. Given a collection $\mathscr{F}=\{F_1,\dots,F_m\}$ of subsets of $[n]$, we denote by $\langle F_1,\dots,F_m\rangle$ or briefly $\langle \mathscr{F}\rangle$ the simplicial complex consisting of all subsets of $[n]$ which are contained in $F_i$ for some $i\in[m]$. This simplicial complex is said to be generated by $F_1,\dots,F_m$; in particular, if $\cF$ is the set of the facets of a simplicial complex $\Delta$, then $\Delta$ is generated by $\cF$. A pure simplicial complex $\Delta$ is called \textit{shellable} if the facets of $\Delta$ can be ordered as $F_1,\dots,F_m$ in such a way that $\langle F_1,\dots,F_{i-1}\rangle \cap \langle F_i\rangle$ is generated by a non-empty set of maximal proper faces of $\langle F_i\rangle$, for all $i\in \{2,\dots,m\}$ (see \cite[Definition 5.1.11]{Bruns_Herzog}). In such a case $F_1,\dots,F_m$ is called a \textit{shelling} of $\Delta$. \\
	Let $\Delta$ be a simplicial complex on $[n]$ and  $R=K[x_1,\dots,x_n]$ be the polynomial ring in $n$ variables over a field $K$. To every collection $F=\{i_1,\dots,i_r\}$ of $r$ distinct vertices of $\Delta$, there is an associated monomial $x_F$ in $R$ where $x_F=x_{i_1}\dots x_{i_r}.$ The monomial ideal generated by all monomials $x_F$ such that $F$ is not a face of $\Delta$ is called \textit{Stanley-Reisner ideal} and it is denoted by $I_{\Delta}$. The \textit{face ring} of $\Delta$, denoted by $K[\Delta]$, is defined to be the quotient ring $R/I_{\Delta}$. From \cite[Corollary 6.3.5]{Villareal}, if $\Delta$ is a simplicial complex on $[n]$ of dimension $d$, then $\dim K[\Delta]=d+1=\max\{s: x_{i_1}\dots x_{i_s}\notin I_{\Delta},i_1<\dots<i_s\}$. 
    We recall a nice combinatorial interpretation of the $h$-vector of a shellable simplicial complex, due to McMullen and Walkup (see \cite[Corollary 5.1.14]{Bruns_Herzog}).
	
	\begin{prop}\label{Thm:McMullen-Walkup}
		Let $\Delta$ be a shellable simplicial complex of dimension $d$ with shelling $F_1,\dots,F_m$. For $j\in \{2,\dots,m\}$ we denote by $r_j$ the number of facets of $\langle F_1,\dots,F_{j-1}\rangle\cap \langle F_j\rangle$ and we set $r_1=0$. Let $(h_0,\dots,h_{d+1})$ be the $h$-vector of $K[\Delta]$. Then $h_i=|\{j:r_j=i\}|$ for all $i\in [d+1]$. In particular, up to their order, the numbers $r_j$ do not depend on the particular shelling. 
	\end{prop}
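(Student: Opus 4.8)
The plan is to decompose the face poset of $\Delta$ into half‑open Boolean intervals indexed by the shelling, read off the $f$‑vector from this decomposition, and then convert to the $h$‑polynomial via the standard Hilbert--Poincar\'e series of a Stanley--Reisner ring. Since $\Delta$ is pure of dimension $d$, every facet $F_j$ has exactly $d+1$ vertices. For $j\in\{2,\dots,m\}$ shellability gives that $\langle F_1,\dots,F_{j-1}\rangle\cap\langle F_j\rangle$ is generated by a non‑empty set of maximal proper faces of $F_j$; writing each such face uniquely as $F_j\setminus\{v\}$, I would set
\[
R_j=\{v\in F_j:\ F_j\setminus\{v\}\in\langle F_1,\dots,F_{j-1}\rangle\},\qquad R_1=\emptyset .
\]

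First I would establish the \emph{key combinatorial lemma}: the faces of $\langle F_1,\dots,F_j\rangle$ not lying in $\langle F_1,\dots,F_{j-1}\rangle$ are exactly those $G$ with $R_j\subseteq G\subseteq F_j$, and $|R_j|=r_j$. For this, a newly acquired face must lie in $\langle F_j\rangle$, hence $G\subseteq F_j$; if some $v\in R_j$ were missing from $G$, then $G\subseteq F_j\setminus\{v\}\in\langle F_1,\dots,F_{j-1}\rangle$, contradicting newness; conversely, if $R_j\subseteq G\subseteq F_j$ lay in $\langle F_1,\dots,F_{j-1}\rangle$, then $G$ would lie in the intersection, hence inside one of its generators $F_j\setminus\{v\}$ with $v\in R_j$, forcing $v\notin G$, again a contradiction. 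That $|R_j|=r_j$ would follow by noting that $F_j\notin\langle F_1,\dots,F_{j-1}\rangle$ (the $F_i$ are distinct facets, so $F_j$ cannot be contained in an earlier $F_i$), whence the facets of the intersection are precisely the $d$‑element sets $F_j\setminus\{v\}$ lying in $\langle F_1,\dots,F_{j-1}\rangle$, i.e.\ those with $v\in R_j$. Consequently the set of all faces of $\Delta=\langle F_1,\dots,F_m\rangle$ is the disjoint union $\bigsqcup_{j=1}^m\{G:R_j\subseteq G\subseteq F_j\}$.

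Next I would count faces by cardinality. From the interval decomposition, with the convention $f_{-1}=1$,
\[
\sum_{i=0}^{d+1}f_{i-1}\,t^{i}=\sum_{G\in\Delta}t^{|G|}=\sum_{j=1}^m\ \sum_{R_j\subseteq G\subseteq F_j}t^{|G|}=\sum_{j=1}^{m}t^{\,r_j}(1+t)^{\,d+1-r_j}.
\]
Then I would invoke the standard formula $\rHP_{K[\Delta]}(t)=\sum_{i=0}^{d+1}f_{i-1}\,t^{i}/(1-t)^{i}$ for the Hilbert--Poincar\'e series of a face ring (see \cite{Bruns_Herzog}); combined with $\rHP_{K[\Delta]}(t)=h(t)/(1-t)^{d+1}$, where $\dim K[\Delta]=d+1$ by \cite[Corollary 6.3.5]{Villareal}, this gives $h(t)=(1-t)^{d+1}\sum_{i=0}^{d+1}f_{i-1}\,(t/(1-t))^{i}$. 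Substituting $s=t/(1-t)$, so that $1+s=1/(1-t)$ and hence $s^{r_j}(1+s)^{d+1-r_j}=t^{r_j}(1-t)^{-(d+1)}$, into the displayed identity yields $h(t)=\sum_{j=1}^{m}t^{\,r_j}$, that is $h_i=|\{j:r_j=i\}|$ for every $i$. The final ``in particular'' then follows because the $h$‑vector of $K[\Delta]$ is an invariant of the ring, independent of any shelling, so the multiset $\{r_1,\dots,r_m\}$ cannot depend on the shelling either.

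The main obstacle I anticipate is the combinatorial lemma of the second paragraph: one must verify simultaneously that the faces newly acquired at step $j$ form precisely the interval $\{G:R_j\subseteq G\subseteq F_j\}$ and that $|R_j|$ coincides with the number $r_j$ of facets of $\langle F_1,\dots,F_{j-1}\rangle\cap\langle F_j\rangle$; once this is in hand, the passage to the $f$‑ and $h$‑polynomials is purely formal.
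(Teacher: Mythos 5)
Your argument is correct and complete: the decomposition of the face poset into the half-open intervals $\{G: R_j\subseteq G\subseteq F_j\}$, the identity $\sum_j t^{r_j}(1+t)^{d+1-r_j}$ for the $f$-polynomial, and the substitution $s=t/(1-t)$ constitute the standard proof of this fact, which is exactly the route taken in the reference the paper cites for it (the paper itself states Proposition \ref{Thm:McMullen-Walkup} without proof, quoting \cite[Corollary 5.1.14]{Bruns_Herzog}). Nothing further is needed.
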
 
    \noindent We conclude this section by introducing the main object of study in this paper. Let $\cP$ be a polyomino satisfying the conditions of Theorem \ref{Theorem: Qureshi condition to have Groebner basis}. Then $G(\cP)$ forms the reduced Gr\"obner basis of $I_{\cP}$ with respect to $<$, in particular $\lt_{<}(I_\cP)$ is squarefree and it is generated in degree two. We denote by $\Delta(\cP)$ the simplicial complex on $V(\cP)$ with $\lt_<(I_{\cP})$ as Stanley-Reisner ideal and we call it the \textit{simplicial complex attached to $\cP$}.

	\section{Shellability of the simplicial complex of a frame polyomino}

    \noindent In this section we introduce a new class of non-simple polyominoes, which we call \textit{frame polyominoes}, and we study the shellability of the attached simplicial complex. A frame polyomino is basically a polyomino that can be obtained by removing a parallelogram polyomino from a rectangular polyomino. \\
    Let us start recalling the definition of parallelogram polyomino given in \cite{Parallelogram Hilbert series}. Let $(a,b)\in \ZZ^2$. The sets $\{(a,b),(a+1,b)\}$ and $\{(a,b),(a,b+1)\}$ are called respectively \textit{east step} and \textit{north step} in $\ZZ^2$. A sequence of vertices $(a_0, b_0), (a_1,b_1),\dots,(a_k,b_k)$ in $\ZZ^2$ is called a \textit{north-east path} if $\{(a_i,b_i), (a_{i+1},b_{i+1})\}$ is either an east or a north step. The vertices $(a_0,b_0)$ and $(a_k, b_k)$ are called the \textit{endpoints} of $S$.
	Let $S_1 : (a_0, b_0), (a_1, b_1), \dots , (a_k, b_k)$ and $S_2 : (c_0, d_0), (c_1, d_1),\dots, (c_k, d_k)$ be two north-east
	paths such that $(a_0, b_0) = (c_0, d_0)$ and $(a_k, b_k ) = (c_k, d_k )$. If for all $1 \leq 
	i$ and $j \leq k - 1$, we have $b_i > d_j$ when $a_i = c_j$, then $S_1$ is said to lie above $S_2$.\\
    If $S_1$ lies above $S_2$, we define 	\textit{parallelogram} polyomino, determined by $S_1$ and $S_2$, the set of cells in the region of $\ZZ^2$ bounded above by $S_1$ and below by $S_2$.

	\begin{defn}\rm
		Let $I=[(1,1),(m,n)]$ be an interval of $\ZZ^2$ and $\cS$ be a parallelogram polyomino determined by $S_1 : (a_0, b_0), (a_1, b_1), \dots , (a_k, b_k)$ and $S_2 : (c_0, d_0), (c_1, d_1),\dots, (c_k, d_k)$, where $1<a_0<a_k<m$ and $1<b_0<b_k<n$. We call \textit{frame polyomino} the non-simple polyomino obtained by removing the cells of $\cS$ from $\cP_{I}$.
	\end{defn}
	\noindent In Figure \ref{Figure: Frame polyominoes} we show three examples of frame polyominoes. 
	\begin{figure}[h!]
		\centering
		\subfloat{\includegraphics[scale=0.65]{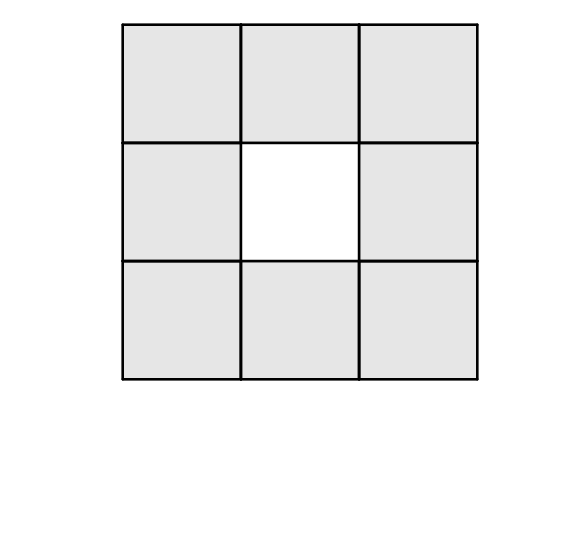}}
		\qquad\qquad
		\subfloat{\includegraphics[scale=0.5]{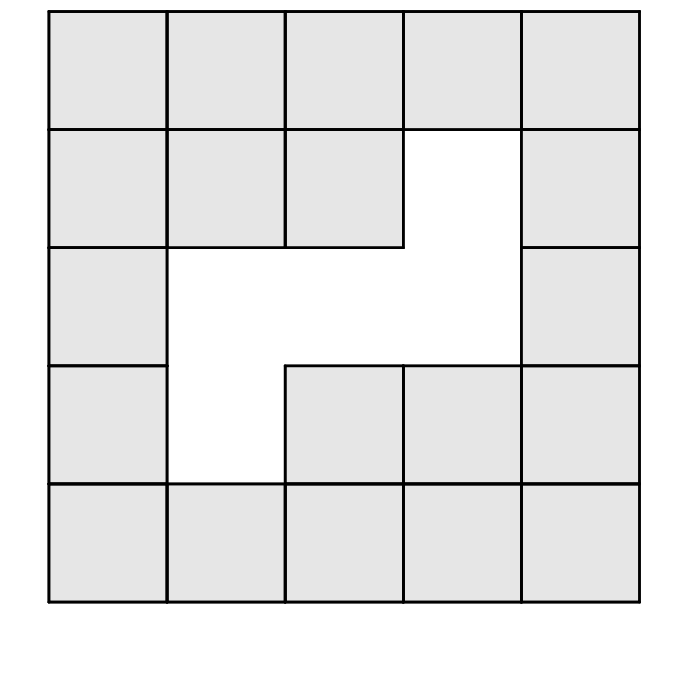}}
  \qquad\qquad
		\subfloat{\includegraphics[scale=0.5]{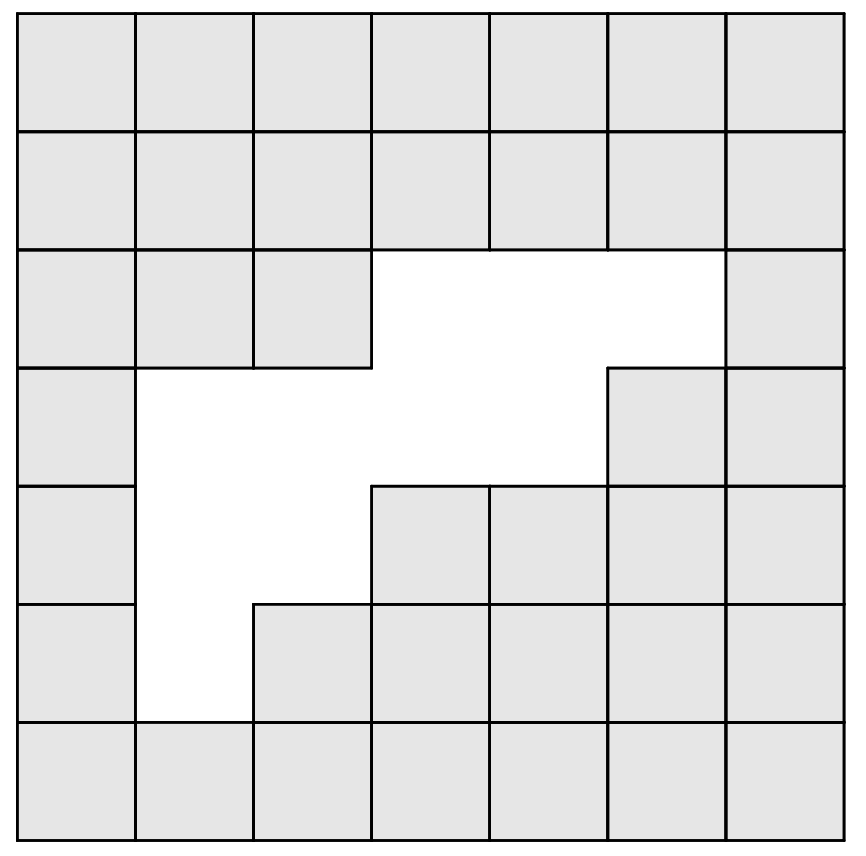}}
		\caption{Examples of frame polyominoes.}
		\label{Figure: Frame polyominoes}
	\end{figure}

        \noindent For a frame polyomino $\cP$ we provide an elementary decomposition, which we use along the paper. It consists of two suitable parallelogram sub-polyominoes, denoted by $\cP_1$ and $\cP_2$. Referring to Figure \ref{Figure: proof for equivalance rook}, $\cP_1$ is the sub-polyomino of $\cP$ highlighted with a red color, and $\cP_2$ is the other one with a hatching filling. Observe that $\cP_1\cap \cP_2=\mathcal{Q}$, with $\cQ=\cP_{[(1,1),(a_0,b_0)]}\cup \cP_{[(a_k,b_k),(m,n)]}$, where $\cP_{[(1,1),(a_0,b_0)]}$ and $\cP_{[(a_k,b_k),(m,n)]}$ are the cell intervals attached respectively to $[(1,1),(a_0,b_0)]$ and $[(a_k,b_k),(m,n)]$. \
	
	\begin{figure}[h!]
		\centering
		\includegraphics[scale=0.6]{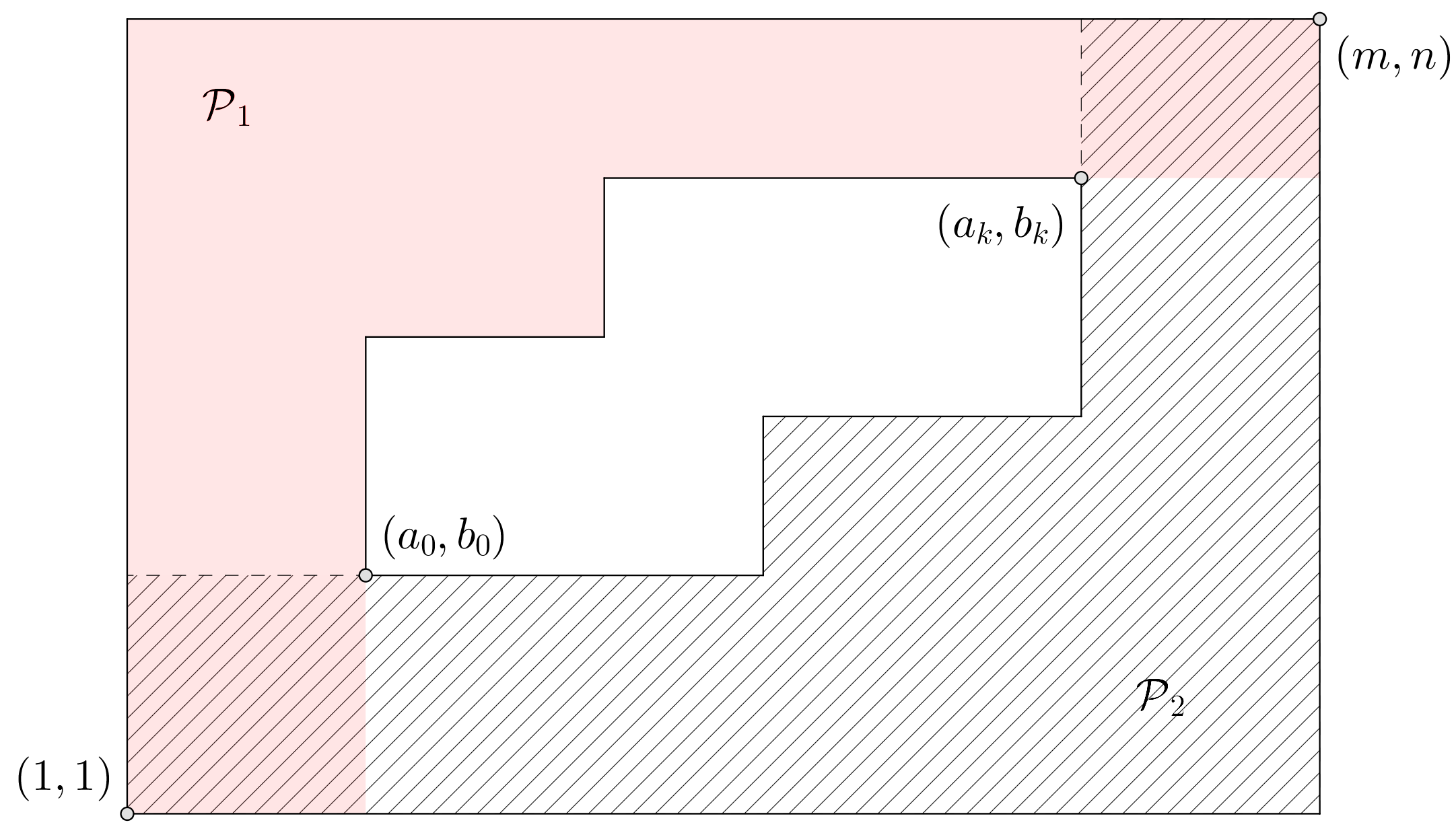}
		\caption{Elementary decomposition of $\cP$.}
		\label{Figure: proof for equivalance rook}
	\end{figure}

    \noindent In the following proposition we show some basic algebraic properties of the polyomino ideal of a frame polyomino. In particular, we determine the Krull dimension of the related coordinate ring using the simplicial complex theory. 
 
	\begin{prop}\label{Prop: Grobner basis of frame polyomino and C.M.}
		Let $\cP$ be a frame polyomino defined by $I=[(1,1),(m,n)]$ and by a parallelogram polyomino $\cS$ determined by $S_1$ and $S_2$ with endpoints $(a_0,b_0)$ and $(a_k,b_k)$. Then:
		\begin{enumerate}
			\item\label{rank1} the set $G(\cP)$ forms the reduced Gr\"{o}bner basis of $I_{\cP}$ with respect to $<$;  
			\item\label{rank2} the initial ideal $\lt_{<}(I_{\cP})$ is generated by the monomials $x_cx_d$ where $c,d$ are the anti-diagonal corners of an inner interval $[a,b]$ of $\cP$;    
			\item\label{rank3} $K[\cP]$ is a normal Cohen-Macaulay domain of Krull dimension $\vert V(\cP)\vert-\rank(\cP)$.
		\end{enumerate}
	\end{prop}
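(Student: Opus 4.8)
The plan is to prove the three statements in order, since each one feeds into the next. For part (\ref{rank1}), the strategy is to verify the combinatorial criterion of Theorem \ref{Theorem: Qureshi condition to have Groebner basis}. So I would take two inner intervals $[b,a]$ and $[d,c]$ of $\cP$ arranged as in Figure \ref{Figura: conditions for Groebner basis}, sharing the anti-diagonal corner $f$, and show that one of the two rectangles spanned by $b,g$ or by $e,c$ is again an inner interval of $\cP$. Here the key geometric input is the elementary decomposition $\cP = \cP_1 \cup \cP_2$ with $\cP_1 \cap \cP_2 = \cQ$: each of $\cP_1$, $\cP_2$ is a parallelogram polyomino, and parallelogram polyominoes are known to satisfy the Qureshi condition (indeed they are simple, even convex). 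The only configurations that need genuine checking are those where the two given inner intervals are not both contained in a single $\cP_i$; but an inner interval straddling both pieces must have all its cells inside $\cP_1 \cap \cP_2 = \cQ$ in the overlap region, and a short case analysis on the position of $f$ relative to the corners $(a_0,b_0)$ and $(a_k,b_k)$ of the removed parallelogram $\cS$ handles these. Part (\ref{rank2}) is then immediate: once $G(\cP)$ is a Gröbner basis, $\lt_<(I_\cP)$ is generated by the leading terms $\lt_<(x_a x_b - x_c x_d)$, and by the chosen reverse-lexicographic order the leading term of each inner $2$-minor is the anti-diagonal product $x_c x_d$.

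For part (\ref{rank3}), normality and Cohen-Macaulayness are already established in the literature for the larger class containing frame polyominoes (\cite{Not simple with localization}, \cite{Shikama}), so the new content is the Krull dimension. The approach is via Stanley--Reisner theory: let $\Delta(\cP)$ be the simplicial complex on $V(\cP)$ whose Stanley--Reisner ideal is $\lt_<(I_\cP)$, which by part (\ref{rank2}) is the edge ideal of the graph $\cG$ on $V(\cP)$ having an edge $\{c,d\}$ for every anti-diagonal pair of an inner interval of $\cP$. Since $K[\cP]$ and $K[\Delta(\cP)] = S_\cP/\lt_<(I_\cP)$ have the same Hilbert function, $\dim K[\cP] = \dim K[\Delta(\cP)] = \dim \Delta(\cP) + 1$, which by \cite[Corollary 6.3.5]{Villareal} equals the maximal size of a subset of $V(\cP)$ containing no edge of $\cG$, i.e.\ the maximal size of an independent set of $\cG$. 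So it suffices to show that the maximal independent sets of $\cG$ have cardinality exactly $|V(\cP)| - \rank(\cP)$, equivalently that the minimal vertex covers of $\cG$ have size $\rank(\cP)$.

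The heart of the argument, and the step I expect to be the main obstacle, is this vertex-cover computation. I would exhibit an explicit vertex cover of size $\rank(\cP)$ --- for instance, for each cell $C$ of $\cP$ select the lower-right corner of $C$ --- and check that it meets every edge $\{c,d\}$ coming from an inner interval (an inner interval of area $\geq 1$ always contains a cell whose lower-right corner is one of its two anti-diagonal corners, or more carefully, lies ``between'' them in the required sense); this gives $\dim K[\cP] \geq |V(\cP)| - \rank(\cP)$. For the reverse inequality one must show no vertex cover can be smaller, equivalently produce an independent set (equivalently, a set of vertices no two of which are anti-diagonal corners of a common inner interval) of size $|V(\cP)| - \rank(\cP)$; the natural candidate is something like the set of all vertices that are \emph{not} lower-right corners of any cell, and one checks that such vertices are pairwise non-adjacent in $\cG$ because any inner interval forces one of its anti-diagonal corners to be a lower-right cell corner. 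Matching these two bounds pins down the dimension. The combinatorial bookkeeping here is where the frame shape (rectangle minus a parallelogram) really enters, since one must confirm the corner counts are consistent with the non-simple boundary created by removing $\cS$; I would organize it by the decomposition $\cP = \cP_1 \cup \cP_2$ and Euler-type counting of vertices versus cells on each piece and on the overlap $\cQ$.
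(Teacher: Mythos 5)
Your treatment of parts (1) and (2) is essentially sound and close in spirit to the paper, which simply cites \cite[Corollary 1.2]{Not simple with localization} for the Gr\"obner basis claim and reads off the initial terms; your plan to verify the condition of Theorem \ref{Theorem: Qureshi condition to have Groebner basis} directly via the decomposition $\cP=\cP_1\cup\cP_2$ is a reasonable substitute, though the case analysis for pairs of inner intervals not contained in a single piece is only sketched. Your identification of the anti-diagonal product as the leading term under $<$ is correct.

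The genuine gap is in the Krull dimension computation in part (3). Exhibiting the vertex cover consisting of the lower-right corners of the cells (equivalently, the independent set of all vertices that are not lower-right corners of a cell) proves only $\dim K[\cP]\geq |V(\cP)|-\rank(\cP)$. For the reverse inequality you must show that \emph{no} independent set of $\cG$ exceeds $|V(\cP)|-\rank(\cP)$, i.e.\ that every vertex cover has size at least $\rank(\cP)$; producing an independent set of size $|V(\cP)|-\rank(\cP)$, as you propose, is not equivalent to this and merely re-proves the lower bound. This upper bound is also not forced by the $\rank(\cP)$ ``cell edges'' alone: already for a $2\times 2$ square those four edges share a vertex and can be covered by three vertices, and one needs the edges coming from the larger inner intervals to force a cover of size four, so no easy matching argument is available. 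The paper sidesteps the issue entirely by invoking purity of $\Delta(\cP)$: since $\lt_<(I_\cP)$ is a squarefree initial ideal of a toric ideal, $\Delta(\cP)$ is pure and shellable (\cite[Theorem 9.6.1]{Villareal}), so all facets have the same cardinality and it suffices to exhibit the single explicit facet $F_I\sqcup S^*$ of cardinality $|V(\cP)|-\rank(\cP)$ (whose size is computed from the known dimension formulas for the simple polyominoes $\cP_I$ and $\cS$). You should either import that purity statement or supply an actual upper-bound argument for the independence number; as written your proof establishes only one of the two inequalities.
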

	
	\begin{proof}
		(1) It follows arguing as done in \cite[Corollary 1.2]{Not simple with localization}.\\
		(2) It is an immediate consequence of (1).\\
		(3) It is known that $I_{\cP}$ is a toric ideal from \cite[Theorem 2.1]{Not simple with localization}. By \cite[Corollary 4.26]{binomial ideals} and \cite[Corollary 1.2]{Not simple with localization} we have that $K[\cP]$ is normal and, as a consequence, by \cite[Theorem 6.3.5]{Bruns_Herzog} we obtain that $K[\cP]$ is Cohen-Macaulay. Now we compute the Krull dimension of $K[\cP]$. Consider the simplicial complex $\Delta(\cP)$ attached to $\cP$.  
        Observe that $\Delta(\cP)$ is a shellable simplicial complex by \cite[Theorem 9.6.1]{Villareal}. Moreover, we have that $\vert V(\cP)\vert = \vert V(\cP_I)\vert -\vert V(\cS)\vert + \vert S_1\vert +\vert S_2\vert -2$ and $\rank(\cP)=\rank(\cP_I)-\rank(\cS)$. Hence:
		\begin{equation}
			\vert V(\cP)\vert -\rank(\cP)= \vert V(\cP_I)\vert- \rank(\cP_I) - (\vert V(\cS)\vert-\rank(\cS)) + \vert S_1\vert +\vert S_2\vert -2.
		\end{equation} 
		\noindent Observe that $\cP_I$ and $\cS$ satisfy the conditions in Theorem \ref{Theorem: Qureshi condition to have Groebner basis}, so we denote by $\Delta(\cP_I)$ and $\Delta(\cS)$ the simplicial complexes attached to $\cP_I$ and $\cS$ respectively. 
        In particular we have that $\dim K[\cP_I]=\dim K[\Delta(\cP_I)]$ and $\dim K[\cS]=\dim K[\Delta(\cS)]$. Since $\cP_I$ and $\cS$ are simple polyominoes, from \cite[Theorem 2.1]{Simple equivalent balanced} and \cite[Corollary 3.3]{def balanced} we know that $K[\cP_I]$ and $K[\cS]$ are normal Cohen-Macaulay domain with respectively $\dim K[\cP_I]=\vert V(\cP_I)\vert -\rank(\cP_I)$ and $\dim K[\cS]=\vert V(\cS)\vert -\rank(\cS)$. As a consequence $\Delta(\cP_I)$ and $\Delta(\cS)$ are pure, so $\dim(\Delta(\cP_I))=\vert F_I\vert$ and $\dim(\Delta(\cS))=\vert S_1\vert=\vert S_2\vert$, where $F_I=[(1,1),(1,n)]\cup  [(1,n),(m,n)]$. Set $S^*=S_2\backslash \{(a_0,b_0),(a_k,b_k)\}$. 
		Therefore, from $(\ref{rank1})$ and from the previous arguments, we have that
		$$ \vert V(\cP)\vert -\rank(\cP)= \vert F_I\vert - \vert S_1\vert + \vert S_1\vert +\vert S_2\vert -2=\vert F_I\vert+\vert S^*\vert=\vert F_I\sqcup S^*\vert .$$
	We prove that $F_I\sqcup S^*$ is a facet of $\Delta(\cP)$. Firstly observe that $F_I\sqcup S^*$ is a face of $\Delta(\cP)$ because there does not exist any inner interval of $\cP$ whose anti-diagonal corners are in $F_I\sqcup S^*$. Due to the maximality, suppose by contradiction that there exists a face $K$ of $\Delta(\cP)$ such that $F_I\sqcup S^*\subset K$. Let $w\in K\setminus (F_I\sqcup S^*)$. If $w\in V(\cP_1)\setminus F_I$ then the interval having $(1,n)$ and $w$ as anti-diagonal corners is an inner interval of $\cP$, which is a contradiction with $(\ref{rank2})$. If $w\in V(\cP_2)\setminus (V(\cP_1)\cup S^*)$, then it is easy to see that there is an inner interval of $\cP$ whose anti-diagonal corners are $w$ and a vertex in $\{(1,b_0),(a_k,n)\}\sqcup S^*$, that is a contradiction with $(\ref{rank2})$. Hence $F_I\sqcup S^*$ is a facet of $\Delta(\cP)$, so we get the desired conclusion.
	\end{proof}

	\noindent As seen in (3) of Proposition \ref{Prop: Grobner basis of frame polyomino and C.M.}, the simplicial complex $\Delta(\cP)$ attached to a frame polyomino $\cP$ is shellable. In order to define a suitable shelling order of $\Delta(\cP)$, we introduce the notion of a \textit{step} of a face of $\Delta(\cP)$. 
	
	\begin{defn}\rm\label{Definition: Step of a facet}
	\noindent Let $\cP$ be a polyomino satisfying Theorem \ref{Theorem: Qureshi condition to have Groebner basis} and $\Delta(\cP)$ be the simplicial complex attached to $\cP$. Let $F$ be a face of $\Delta(\cP)$ with $\vert F\vert \geq 3$ and $F'=\{(a,b),(c,b),(c,d)\}\subseteq F$. We say that $F'$ forms a \textit{step} in $F$ or that $F$ has a \textit{step} $F'$ if:
	\begin{enumerate}
			\item $a<c$ and $b<d$;
			\item for every integer $i\in \{a+1,\dots,c-1\}$ there does not exist $(i,b)$ in $F$;
			\item for every integer $j\in \{b+1,\dots,d-1\}$ there does not exist $(c,j)$ in $F$;
			\item $(c,b)$ is the lower right corner of a cell of $\cP$.
	\end{enumerate}
	In such a case the vertex $(c,b)$ is said to be the \textit{lower right corner} of $F'$. 
	\end{defn}

	\begin{exa}\rm 
	Let $\cP$ be the polyomino in Figure \ref{Figure: Examples steps}. It is trivial to see that $\cP$ satisfies Theorem \ref{Theorem: Qureshi condition to have Groebner basis}, so we consider the simplicial complex $\Delta(\cP)$ attached to $\cP$. The blue vertices represent a facet $F$ of $\Delta(\cP)$. $F$ has five steps, which are $\{a_1,a_2,a_7\}$, $\{a_2,a_3,a_4\}$, $\{a_7,a_8,a_{11}\}$, $\{a_{12},a_{13},a_{15}\}$ and $\{a_9,a_{10},a_{14}\}$. Note that $\{a_5,a_6,a_9\}$ is not a step of such a facet because $a_6$ is not the lower right corner of a cell of $\cP$. 
	\begin{figure}[h!]
		\centering
		\includegraphics[scale=1]{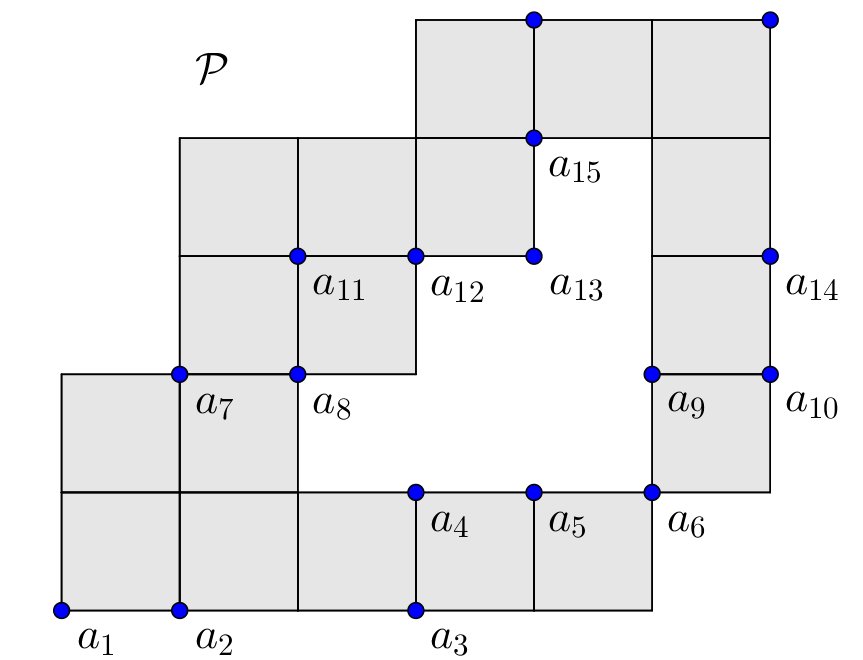}
		\caption{Example of steps in a facet of $\Delta(\cP)$.}
		\label{Figure: Examples steps}
	\end{figure}
	\end{exa}

	\noindent We show a useful property of a step of a facet of the simplicial complex attached to a frame polyomino. 

	\begin{lemma}\label{Lemma: a step gives an inner interval}
	Let $\cP$ be a frame polyomino. Let $\Delta(\cP)$ be the simplicial complex attached to $\cP$ and $F'=\{(a,j),(i,j),(i,b)\}$ be a step of a facet of $\Delta(\cP)$. Then $[(a,j),(i,b)]$ is an inner interval of $\cP$.  	
	\end{lemma}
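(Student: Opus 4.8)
The plan is to show that the interval $[(a,j),(i,b)]$ is proper and that every cell of the cell interval $\cP_{[(a,j),(i,b)]}$ belongs to $\cP$. First I would observe that properness is immediate: conditions (1) of Definition \ref{Definition: Step of a facet} give $a<i$ and $j<b$, so $(a,j)$ and $(i,b)$ are genuine diagonal corners of a proper interval, with anti-diagonal corners $(a,b)$ and $(i,j)$. Condition (4) tells us $(i,j)$ is the lower right corner of a cell $C$ of $\cP$; this anchors the bottom-right corner of the putative inner interval inside $\cP$.

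Next I would argue that no cell of $\cP_{[(a,j),(i,b)]}$ can be missing. Here is where I expect the main obstacle to lie, and the key is to use that $F'$ is contained in a \emph{facet} of $\Delta(\cP)$, together with the explicit geometry of a frame polyomino and the decomposition $\cP=\cP_1\cup\cP_2$ with $\cP_1\cap\cP_2=\cQ$ recalled before Proposition \ref{Prop: Grobner basis of frame polyomino and C.M.}. The strategy is: were some cell of $\cP_{[(a,j),(i,b)]}$ not in $\cP$, then the hole of $\cP$ (the removed parallelogram $\cS$) would have to meet the rectangle $[(a,j),(i,b)]$. I would then use the fact that $\cS$ is a parallelogram polyomino, so its boundary paths $S_1,S_2$ are monotone north-east paths, to locate a vertex of $\cS$ inside the open rectangle $](a,j),(i,b)[$ or on its boundary edges; concretely, the portion of $\cP$ lying weakly below the row $j$ and weakly left of the column $i$ near the corner $(i,j)$ forces, by monotonicity of $S_2$, that the whole staircase region up to $(a,j)$ and $(i,b)$ is filled — otherwise the cell with lower right corner $(i,j)$ (which exists by (4)) could not be in $\cP$ while a cell further inside the rectangle is absent, contradicting connectivity/shape of the hole.

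Then I would derive the contradiction with maximality of the facet. Suppose $[(a,j),(i,b)]$ fails to be an inner interval; by the argument above the obstruction produces an inner interval $[a',b']$ of $\cP$ strictly inside $[(a,j),(i,b)]$ whose anti-diagonal corners $c',d'$ satisfy: one of them lies on the horizontal segment from $(a,j)$ to $(i,j)$ and the other on the vertical segment from $(i,j)$ to $(i,b)$ — but conditions (2) and (3) of Definition \ref{Definition: Step of a facet} say that the only vertices of $F$ on those two segments are the three vertices of $F'$ themselves, namely $(a,j)$, $(i,j)$, $(i,b)$. Since $c',d'$ would then be among these three and form the anti-diagonal corners of a proper inner interval, the only possibility is $\{c',d'\}=\{(a,j),(i,b)\}$, forcing $[a',b']=[(a,j),(i,b)]$ after all, i.e.\ this interval \emph{is} inner. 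Alternatively, if the obstructing inner interval has a corner strictly interior to a segment, that corner is a vertex of $\cP$ not in $F$, and adjoining it to $F$ is still a face (it creates no new inner interval with anti-diagonal corners in the enlarged set, by (2)--(3) and Proposition \ref{Prop: Grobner basis of frame polyomino and C.M.}\eqref{rank2}), contradicting that $F$ is a facet. Either way we conclude $[(a,j),(i,b)]\in\cI(\cP)$.

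The one routine point I would expand carefully in the write-up is the geometric claim that the hole $\cS$, being bounded by north-east paths with endpoints $(a_0,b_0)$ and $(a_k,b_k)$ satisfying the strict inequalities in the definition of frame polyomino, cannot partially intrude into the rectangle $[(a,j),(i,b)]$ while leaving the cell at corner $(i,j)$ inside $\cP$; monotonicity of $S_1,S_2$ makes this a short case check on whether $(i,j)$ lies to the south-east or north-west of the staircase cut out by $\cS$.
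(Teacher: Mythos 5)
Your overall strategy (argue by contradiction and use maximality of the facet together with conditions (2)--(3) of Definition \ref{Definition: Step of a facet}) is the same as the paper's, but the execution has a genuine gap at the decisive step. The crux of the lemma is this: if a cell of $\cP_{[(a,j),(i,b)]}$ is missing, one must exhibit a vertex $v$ of $\cP$ lying on the horizontal segment from $(a,j)$ to $(i,j)$ or on the vertical segment from $(i,j)$ to $(i,b)$ such that $v$ is not an anti-diagonal corner of an inner interval with \emph{any} vertex of $F$; only then does maximality force $v\in F$ and contradict (2) or (3). You assert that adjoining such a $v$ ``creates no new inner interval with anti-diagonal corners in the enlarged set, by (2)--(3)'', but (2)--(3) only say that $F$ has no vertices \emph{on those two segments}; they say nothing about vertices of $F$ elsewhere in $\cP$ that could pair with $v$. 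Ruling those out is exactly where the work lies: the paper delimits explicit regions of $\cP$ containing no vertex of $F$, chooses the missing cell with minimal second coordinate, and uses the maximal inner interval with lower right corner $(i,j)$ to conclude. Your proposal leaves this unargued.

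Two further points would derail a write-up. First, your ``first alternative'' is incoherent: no two of $(a,j)$, $(i,j)$, $(i,b)$ are in anti-diagonal position ($(a,j)$ and $(i,b)$ are \emph{diagonal} corners, the other pairs are collinear), so an obstructing inner interval cannot have its anti-diagonal corners among these three, and the conclusion $\{c',d'\}=\{(a,j),(i,b)\}$ cannot arise. Second, the geometric claim that monotonicity of $S_1,S_2$ together with condition (4) forces the whole rectangle to be filled is false: the hole $\cS$ can intersect $[(a,j),(i,b)]$ while the cell with lower right corner $(i,j)$ stays in $\cP$ (for instance when $(i,j)$ lies to the south-east of $S_2$ and the rectangle reaches up into the hole). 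If that claim were true, the lemma would follow from the frame geometry alone, without using that $F'$ sits inside a facet --- which the paper's own counterexample in Figure \ref{Figure: closed path plus cell} shows is not the nature of the statement. You also omit the easy reductions the paper makes first (the unit-cell case and the case $(i,j)\in V(\cP_1)$, where the parallelogram structure of $\cP_1$ gives the result directly), though that is minor.
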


	\begin{proof}
	If $a=i-1$ and $b=j+1$ then we have the conclusion immediately from (4) of Definition \ref{Definition: Step of a facet}. Assume that $a\neq i-1$ or $b\neq j+1$. If $(i,j)\in V(\cP_1)$ then $[(a,j),(i,b)]$ is an inner interval of $\cP$, by the structure of $\cP$ and by Definition \ref{Definition: Step of a facet}. Assume that $(i,j)\notin V(\cP_1)$, so $(i,j)\in V(\cP_2)\setminus (\cP_{[(1,1),(a_0,b_0)]}\cup \cP_{[(a_k,b_k),(m,n)]})$. Suppose by contradiction that $[(a,j),(i,b)]$ is not an inner interval of $\cP$. Then there exists a cell $C$ not belonging to $\cP$ with lower right corner $(h,l)$ such that $(h,l)\neq (i,j)$ and $a<h\leq i$, $j\leq l<b$. Observe that all vertices of $\cP$ in $[(a+1,1),(m,j)]\setminus \{(i,j)\}$ and $[(i,1),(m,b-1)]\setminus\{(i,j)\}$ do not belong to $F$. Suppose $h=i$. Then there does not exist any inner interval of $\cP$ having $(h,l)$ and another vertex in $F$ as anti-diagonal corners, so $(h,l)\in F$ due to the maximality of $F$, but this is a contradiction with (3) of Definition \ref{Definition: Step of a facet}. A similar contradiction arises if $ l=j$. Therefore $h\neq i$ and $l\neq j$. It is not restrictive to assume that $l$ is the minimum integer such that the cell $C$ with lower right corner $(h,l)$ belongs to $[(a,j),(i,b)]$ but not to $\cP$. Let $J$ be the maximal inner interval of $\cP$ having $(i,j)$ as the lower right corner and containing $(a,j)$; moreover, we denote by $H$ the maximal edge interval of $\cP$ containing $(a,j)$ and $(i,j)$. Note that no vertex of $J\setminus H$ belongs to $F$. Therefore, as explained before, $(h,j)\in F$, so we get again a contradiction with (2) of Definition \ref{Definition: Step of a facet}. In conclusion $[(a,j),(i,b)]$ is an inner interval of $\cP$.
	\end{proof}

	\begin{rmk}\rm 
 	The assumption that $\cP$ is a frame polyomino is important for the claim of Lemma \ref{Lemma: a step gives an inner interval}. In fact, if $\cP$ is the polyomino in Figure \ref{Figure: closed path plus cell} then $\cP$ satisfies the condition of Theorem \ref{Theorem: Qureshi condition to have Groebner basis} and the set of the orange vertices determines a facet $F$ of the simplicial complex $\Delta(\cP)$ attached to $\cP$, where $\{v_1,v_2,v_3\}$ is a step of $F$ but $[v_1,v_3]$ is not an inner interval of $\cP$.      
 	
 	\begin{figure}[h!]
 		\centering
 		\includegraphics[scale=0.9]{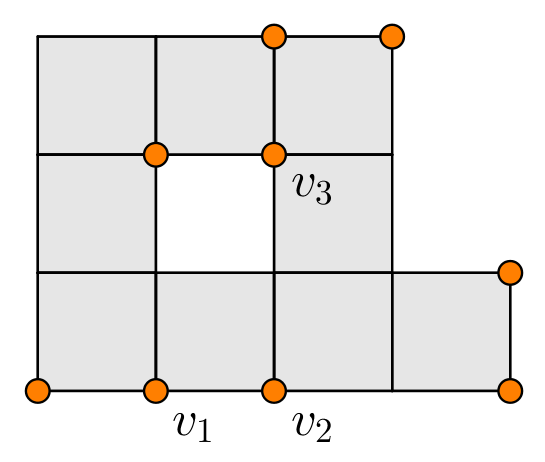}
 		\caption{A facet.}
 		\label{Figure: closed path plus cell}
 	\end{figure}
	\end{rmk}

		\noindent To give a complete description of the structure of a facet of the simplicial complex associated with a frame polyomino, we start introducing some basics on a finite distributive lattice, following \cite{Parallelogram Hilbert series}. Let $P$ be a poset with partial order relation $\prec$. A \textit{chain} of $P$ is a totally ordered subset of $P$. A chain $C$ is said to be \textit{maximal} if there does not exist any chain of $P$ containing $C$. Given $a \in P$, the \textit{rank} of $a$ in $P$ is the supremum of the length of chains in $P$ that descends from $a$. The \textit{rank} of $P$ is the supremum of the length of chains of $P$ and it is denoted by $\rank(P)$. Let $L$ be a simple planar distributive lattice. We say that, given $x,y \in L$, $y$ covers $x$ if $x \prec y$ and there is no $z\in L$ such that $x \prec z \prec y$; in such a case we use the notation $x\rightarrow y$. \\
        Along the paper, as done in \cite{Parallelogram Hilbert series}, if a polyomino $\cP$ has a structure of a distributive lattice on $V(\cP)$, then with abuse of notation we refer to $\cP$ as a distributive lattice. From \cite[Proposition 2.3]{Parallelogram Hilbert series} we know that a finite collection of cells $\cP$ is a parallelogram polyomino if and only if	$\cP$ is a simple planar distributive lattice.\\
        Let $\cP$ be a parallelogram polyomino. Let $\rank(\cP) = d + 1$ as distributive lattice, $ \mathfrak{m} : \min L = x_0 \rightarrow x_1 \rightarrow x_2 \rightarrow
	\dots x_{d+1} = \max L$ be a maximal chain of $\cP$ and $C = [(i, j), (i+1, j+1)]$ be a cell of $\cP$. We say that $\mathfrak{m}$ has a \textit{descent} at $C$ if $\mathfrak{m}$ passes
	through the edges $(i, j) \rightarrow (i + 1, j)$ and $(i + 1, j) \rightarrow(i + 1, j + 1)$.
 
\begin{rmk}\rm \label{Remark: facets in a parallelogram}
	Let $\cP$ be a parallelogram polyomino. Observe that $\cP$ satisfies the conditions in Theorem \ref*{Theorem: Qureshi condition to have Groebner basis}, so the set of generators of $I_{\cP}$ forms the (quadratic) reduced Gr\"obner basis of $I_{\cP}$ with respect to $<$. Let $\Delta(\cP)$ be the simplicial complex attached to $\cP$. For all $j\geq 0$, it is easy to see that every maximal chain of $\cP$ with $j$ descents as a distributive lattice is a facet of $\Delta(\cP)$ with $j$ steps, and vice versa. 
	\end{rmk}

    \begin{lemma}\label{Lemma: on the max. edge int. exists a point in F}
	Let $\cP$ be a frame polyomino defined by $I=[(1,1),(m,n)]$ and by a parallelogram polyomino $\cS$ determined by the north-east paths $S_1 : (a_0, b_0), (a_1, b_1), \dots , (a_k, b_k)$ and $S_2 : (c_0, d_0), (c_1, d_1),\dots, (c_k, d_k)$ with $(a_0, b_0) = (c_0, d_0)$ and $(a_k, b_k ) = (c_k, d_k )$. 
 Let $\Delta(\cP)$ be the simplicial complex attached to $\cP$ and $F$ be a facet of $\Delta(\cP)$. Then for all maximal edge intervals $\cL$ of $\cP$ there exists $v\in \cL$ belonging to $F$.  
	\end{lemma}
	
	\begin{proof}
    Let $V$ be a maximal vertical edge interval of $\cP$. We prove that there exists $v$ in $V$ belonging to $F$. We distinguish the following four cases.
    \begin{itemize}
        \item[Case 1] Assume that $V=[(a,1),(a,n)]$ with $a\in \{1,\dots,a_0\}$. Suppose that $a=1$. Then $(1,1)\in F$, since $(1,1)$ is not the anti-diagonal corner of an inner interval of $\cP$, and trivially $(1,1)\in V$. Therefore we get the claim for $a=1$. Suppose now that $1<a\leq a_0$. Consider $G=\{(i,j)\in F:1\leq i<a,1\leq j\leq n \}$. Observe that $G\neq \emptyset$ since $(1,1)\in G$. Let $(i_1,k_1)\in G$ with $k_1=\max \{j:(i,j)\in G\}$. We want to show that $(a,k_1)\in F$. First of all, we observe that for every inner interval $\cI$ of $\cP$ having $(i_1,k_1)$ as anti-diagonal corner the other anti-diagonal corner of $\cI$ does not belong to $F$, otherwise there exists an inner interval of $\cP$ whose two anti-diagonal corners are in $F$, which is a contradiction with $(2)$ of Proposition \ref{Prop: Grobner basis of frame polyomino and C.M.}. Moreover, the vertices in $V_1=\{(i,j)\in V(\cP):1\leq i< a,k_1<j\leq n\}$ are not in $F$ due to the maximality of $k_1$. In order to prove that $(a,k_1)\in F$, it is sufficient to prove that, for every inner interval of $\cP$ with anti-diagonal corner $(a,k_1)$, the other anti-diagonal corner is not in $F$; in fact, due the maximality of $F$ it follows necessarily that $(a,k_1)\in F$. Let $\cK$ be an inner interval of $\cP$ having $(a,k_1)$ as anti-diagonal corner and $v=(r,s)$ be the other anti-diagonal corner of $\cK$. We have just two cases to examine. If $r<a$ and $s>k_1$ then $v\in V_1$, so $v\notin F$. If $r>a$ and $s<k_1$, then we show that $v\notin F$. In fact, suppose by contradiction that $v\in F$. Let $\tilde{\cK}$ be the interval of $\Z^2$ with anti-diagonal corners $(i_1,k_1)$ and $v$. Denote by $\cC$ the interval of $\Z^2$ having $(i_1,k_1)$ and $(a,s)$ as anti-diagonal corners. Note that $\cC$ is an inner interval of $\cP$ due to the structure of $\cP$ as $a\leq a_0$, and that $\tilde{\cK}=\cK\cup \cC$. Since $\cK$ and $\cC$ are inner intervals of $\cP$, then $\tilde{\cK}$ is an inner interval of $\cP$. This is a contradiction because $(i_1,k_1)$ and $v$ are anti-diagonal corners of $\tilde{\cK}$ and they belong to $F$ at the same time. Hence $v$ cannot be in $F$. In conclusion, we get the desired claim.  
        \item[Case 2] Assume that $V=[(a,1),(a,d_h)]$, where $a_0<a< a_k$ and $(a,d_h)\in S_2$ for a suitable $h\in [k-1]$. Suppose that $a=a_0+1$. Let $k_2=\max \{j:(i,j) \in F\cap [(1,1),(a_0,b_0)]\}$. Then it is easy to see for every inner interval of $\cP$ with anti-diagonal corner $(a_0+1, k_2)$, the other anti-diagonal corner is not in $F$, so $(a_0+1,k_2) \in F$. Suppose that $a_0+1<a\leq a_k$. Let $S_2'\subset S_2$ be the north-east path $(a_0+1,d_1),\dots,(a,d_h)$. We denote by $\cR$ the parallelogram polyomino determined by the two north-east paths $[(a_0+1,1),(a_0+1,d_1)]\cup S_2'$ and $[(a_0+1,1),(a,1)]\cup[(a,1),(a,d_h)]$. Let $G'= V(\cR)\cap F$. $G'\neq \emptyset$ because $(a_0+1,k_2)\in G'$. Set $k_3=\max \{j:(i,j) \in G'\}$. Using similar arguments as done in Case 1, we prove that $(a,k_3)\in F$, so the claim follows.
        \item[Case 3] The case when $V=[(a,b_j),(a,n)]$, where $a_0< a< a_k$ and $(a,b_j)\in S_1$ for an opportune $h\in [k-1]$, can be proved similarly as done in Case 2.
        \item[Case 4] Assume that $V=[(a,1),(a,n)]$, where $a_k\leq a\leq m$. If $a=a_k$, then we set $k_4=\max\{j\in [b_k]:(a_k-1,j)\in F\}$ and, using the arguments explained in Case 2, it is easy to show that $(a_k,k_4)\in F$. If $a_k< a\leq m$, then we get the claim arguing as done in Cases 1 and 2.
    \end{itemize} 

    \noindent For a maximal horizontal edge interval of $\cP$, the claim can be shown as done before. Therefore the Lemma is completely proven.   	    
	\end{proof}

    \begin{disc}\rm \label{Discussion: facet}
    Let $\cP$ be a frame polyomino and $F$ be a facet of the simplicial complex $\Delta(\cP)$ attached to $\cP$. From Lemma \ref{Lemma: on the max. edge int. exists a point in F} we know that in every maximal edge interval of $\cP$ we can find an element of $F$. Now, we want to describe how the elements of $F$ are arranged in $\cP$.\\ 
    Let $v=(i,j)$ be the maximal vertex of $F$ in $V(\cP_{[(1,1),(a_0,b_0)]})$ with respect to $<$ and $w=(t,l)$ be the minimal vertex of $F$ in $V(\cP_{[(a_k,b_k),(m,n)]})$ with respect to $<$. Observe that $v$ and $w$ are unique. In fact, if $v=(i',j')$ is another maximal vertex of $F$ in $V(\cP_{[(1,1),(a_0,b_0)]})$ with respect to $<$, then either $i'<i$ and $j'>j$ or $i'>i$ and $j'<j$, so there exists an inner interval of $\cP$ with $v,v'$ as anti-diagonal corners.  But this is a contradiction with $(2)$ of Proposition \ref{Prop: Grobner basis of frame polyomino and C.M.}, since $v,v'\in F$. The same argument follows for $w$. Moreover, we point out that $(1,1),(m,n)\in F$, because they cannot be the  anti-diagonal corners of an inner interval of $\cP$. \\
    Consider $\cP_{[(1,1),(a_0,b_0)]}$ and $v=(i,j)$. We examine the following four cases.
    \begin{enumerate}
        \item If $i=1$ and $j=1$ then no vertex from $[(1,1),(a_0,b_0)]$ belongs to $F$.
        \item  If  $i>1$ and $j=1$, then $[(1,1),(i,1)]\subset F$, because no vertex from $\{(p,q)\in V(\cP):p<i,q>1\}$ can be in $F$ and due to the maximality of $F$.
        \item If $i=1$ and $j>1$, then $[(1,1),(1,j)]\subset F$, arguing as in $(1)$.
        \item Assume that $i>1$ and $j>1$. Consider $(1,2)$ and $(2,1)$ and we show that either $(1,2)$ or $(2,1)$ belongs to $F$. First of all, both of them cannot be in $F$, otherwise, we have a contradiction with (2) of Proposition \ref{Prop: Grobner basis of frame polyomino and C.M.}. Moreover, since the vertices in $\{(p,q)\in V(\cP):p<i,q>j\}\cup \{(p,q)\in V(\cP):p>i,q<j\}$ are not in $F$ and due to the maximality of $F$, at least one of $(1,2)$ and $(2,1)$ belongs to $F$. We may assume that $(2,1)\in F$, because the arguments for $(1,2)$ are the same. We have two sub-cases. If $i=2$, that is $v=(2,j)$, then the only vertices of $F$ in $V(\cP_{[(1,1),(a_0,b_0)]})$ are given by $\{(1,1),(2,1)\}\cup [(2,1),(2,j)]$. If $i>2$, then the cell having $(2,1)$ as lower left corner is in $\cP_{[(1,1),(a_0,b_0)]}$, so $(2,2),(3,1)\in V(\cP_{[(1,1),(a_0,b_0)]})$. Arguing as before, we have that either $(2,2)$ or $(3,1)$ belongs to $F$, and finally we iterate that procedure until $v$. 
    \end{enumerate}
    \noindent  Hence the elements of $F$ form a chain $\mathfrak{c}_1:(1,1)\rightarrow\dots \rightarrow v=(i,j)$ in $\cP_{[(1,1),(a_0,b_0)]}$.\\
    Now, we focus on $\cP_1\setminus \cQ$. Observe that no vertex in $\{(p,q)\in V(\cP):p<i,q>j\}\cup \{(p,q)\in V(\cP):p>i,q<j\}$ belongs to $F$ because $v=(i,j)\in F$. Moreover, due to the maximality of $v$ in $V(\cP_{[(1,1),(a_0,b_0)]})$ with respect to $<$, we have that the vertices in $\{(p,q)\in V(\cP):p>i,j\leq q<b_0+1\}$ do not belong to $F$. Hence for every inner interval of $\cP$ with anti-diagonal corner $(i, b_0+1)$, the other anti-diagonal corner is not in $F$, so $(i,b_0+1)\in F$ because of the maximality of $F$. Starting from $(i,b_0+1)$, we argue similarly as done before in $\cP_{[(1,1),(a_0,b_0)]}$ and we continue that procedure until $(a_k-1,l)$. Therefore, the elements of $F$ form a chain $\mathfrak{c}_2:(i,b_0+1)\rightarrow\dots \rightarrow (a_k-1,l)$ in $\cP_1\setminus \cQ$.\\
    By similar arguments, the elements of $F$ provide a chain $\mathfrak{c}_3:(a_0+1,j)\rightarrow\dots \rightarrow (t,b_k-1)$ in $\cP_2\setminus \cQ$ and another one $\mathfrak{c}_4:w=(t,l)\rightarrow\dots \rightarrow (m,n)$ in $\cP_{[(a_k,b_k),(m,n)]}$. \\
    Therefore $F$ is described by the chains $\mathfrak{c}_1$, $\mathfrak{c}_2$, $\mathfrak{c}_3$ and $\mathfrak{c}_4$. In Figure \ref{Figure: Facets Frame polyominoes} we show two facets of the simplicial complex attached to a frame polyomino. 
   
    \begin{figure}[h!]
		\centering
		\subfloat[]{\includegraphics[scale=0.95]{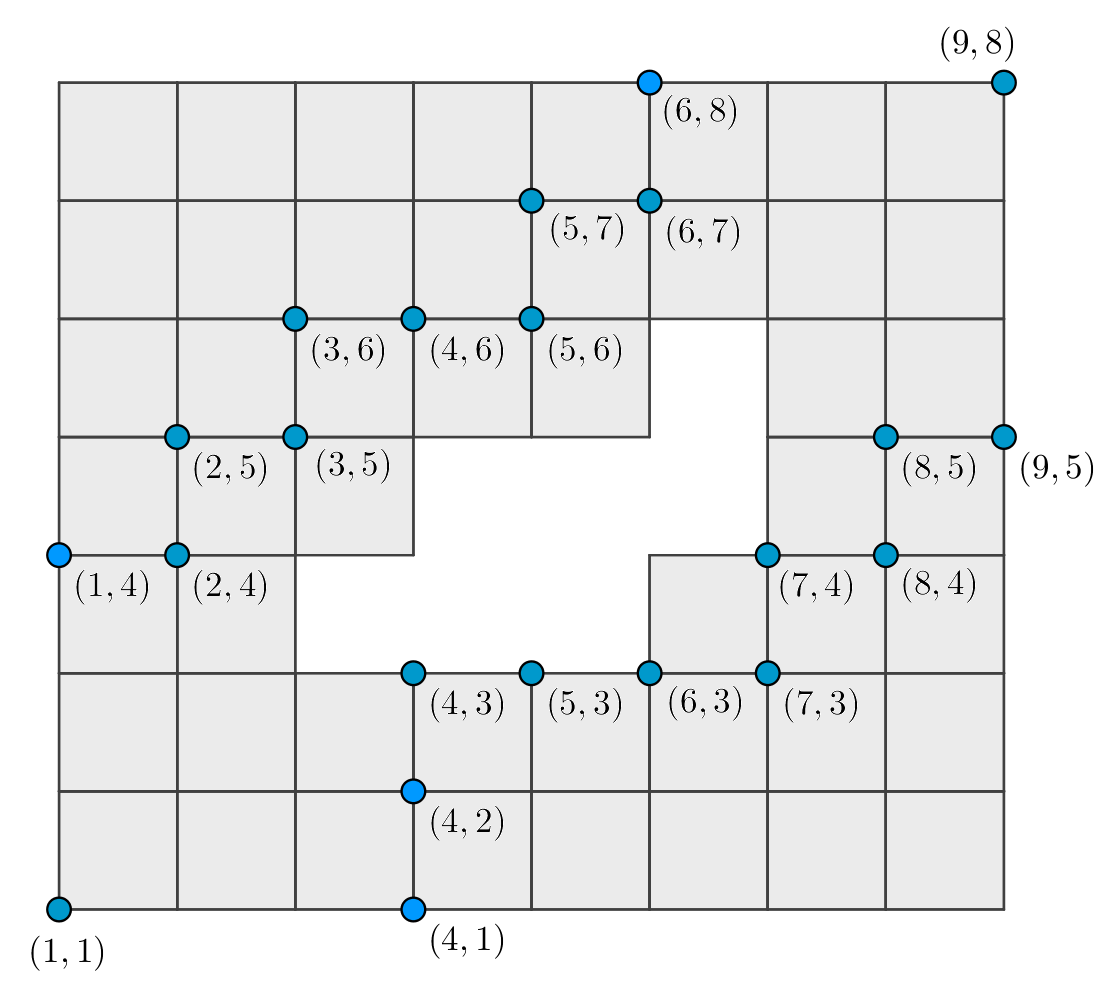}}
		\subfloat[]{\includegraphics[scale=0.95]{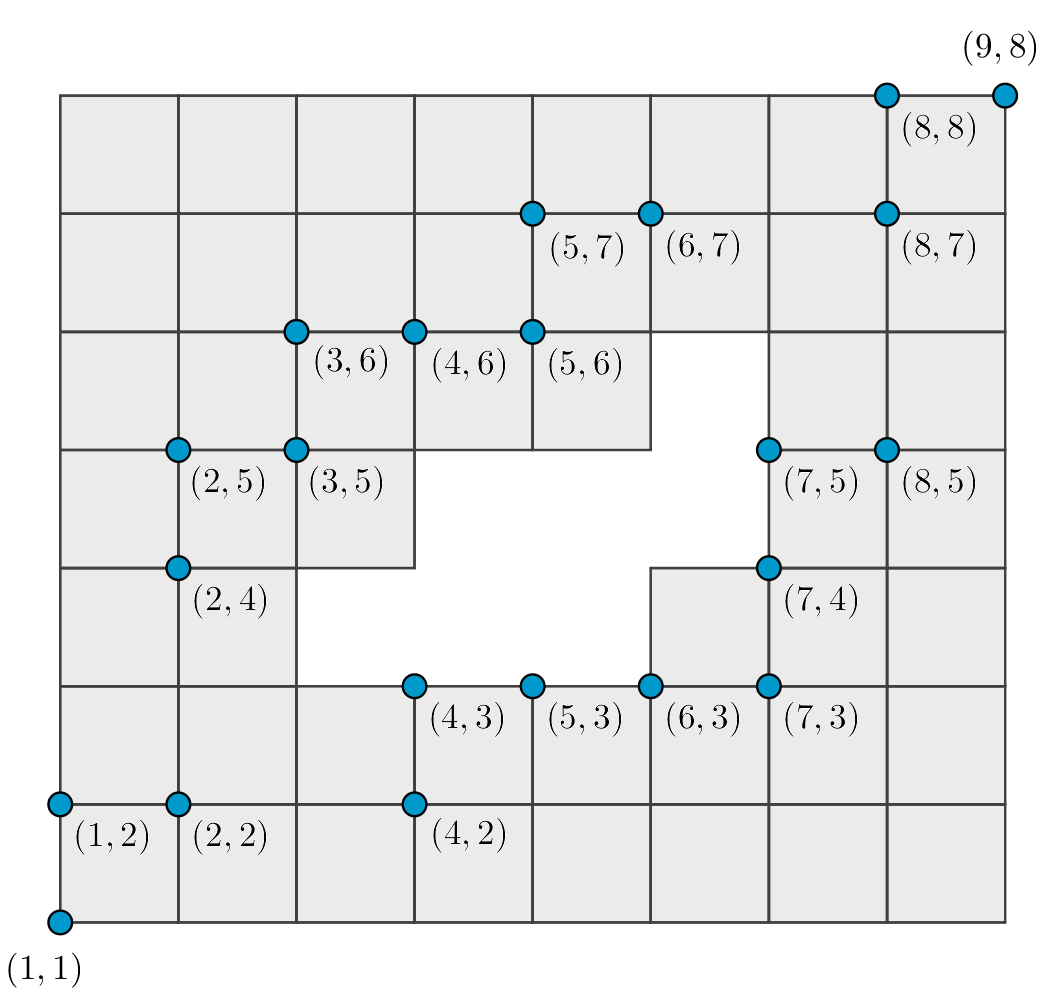}}
		\caption{Examples of facets in a frame polyomino.}
		\label{Figure: Facets Frame polyominoes}
	\end{figure}
    
      \noindent Moreover, if we denote by $n(\mathfrak{c}_i)$ the number of the descents in $\mathfrak{c}_i$ for all $i\in [4]$ and by $n(F)$ the number of the steps in $F$, we invite the reader to observe that $\sum_{i=1}^4 n(\mathfrak{c}_i)\leq n(F)\leq \sum_{i=1}^4 n(\mathfrak{c}_i)+4$. In fact, every descent of $\mathfrak{c}_i$ corresponds to a step of $F$, so $\sum_{i=1}^4 n(\mathfrak{c}_i)\leq n(F)$, and there are at most four steps in $F$ that are not descents of a chain $\mathfrak{c}_i$, as shown in Figure \ref{Figure: Facets Frame polyominoes} (B) by $\{(1,2),(2,2),(2,4)\}$, $\{(2,2),(4,2),(4,3)\}$, $\{(6,7),(8,7),(8,8)\}$ and $\{(7,5),(8,5),(8,7)\}$, so $n(F)\leq \sum_{i=1}^4 n(\mathfrak{c}_i)+4$.
    \end{disc}
    
    \noindent In the following definition we introduce a way to compare two facets of the simplicial complex attached to a polyomino having the property described in Theorem \ref{Theorem: Qureshi condition to have Groebner basis}.
    \begin{defn}\rm 
      Let $\cP$ be a polyomino satisfying Theorem \ref{Theorem: Qureshi condition to have Groebner basis} and $\Delta(\cP)$ be the simplicial complex attached to $\cP$. We denote by $\cF_{\cP}$ the set of the facets of $\Delta(\cP)$ and we define a suitable order on $\cF(\cP)$. First of all, we introduce the following total order on $V(\cP)$. Let $a,b\in V(\cP)$ with $a=(i,j)$ and $b=(k,l)$, we set $b<'a$ if $j > l$, or, $j = l$ and $i > k$. Now, consider two distinct facets $F=\{a_1,\dots, a_d\}$ and $G=\{b_1,\dots, b_d\}$ of $\Delta(\cP)$, where $a_{i+1}<'a_i$ and $b_{i+1}<'b_{i}$ for all $i=1,\dots,d-1$. Let $j$ be the smallest integer in $[d]$ such that $b_j\neq a_j$. Then we define $F<_{\mathrm{lex}} G$ if $a_j<' b_j$. Moreover, if $\cF_{\cP}=\{F_0,F_1,\dots, F_r\}$, then we say that $\cF_{\cP}$ is \textit{lexicographically order in descending} if $F_{i+1}<_{\mathrm{lex}} F_i$ for all $i=0,\dots, r-1$.     
    \end{defn}
   \begin{exa}\rm
       Let $\cP$ be the polyomino in Figure \ref{Figure: Facets Frame polyominoes} and $F$ and $G$ respectively the facets of $\Delta(\cP)$ showed in (A) and (B), that are
       \begin{itemize}
           \item[-]
        $F=\{(9,8),(6,8),(6,7),(5,7),(5,6),(4,6),(3,6),(9,5),(8,5),(3,5),(2,5)(8,4),(7,4),(2,4),\\(1,4),(7,3),(6,3),(5,3),(4,3),(4,2),(4,1),(1,1)\}$,
       \item[-] $G=\{(9,8),(8,8),(8,7),(6,7),(5,7),(5,6),(4,6),(3,6),(8,5),(7,5),(3,5)(2,5),(7,4),(2,4),\\(7,3),(6,3),(5,3),(4,3),(4,2),(2,2),(1,2),(1,1)\}.$
       \end{itemize}
      \noindent Observe that in $F$ and $G$ the first different vertices from the left to right are in the second position and $(6,8)<'(8,8)$, so $F<_{\mathrm{lex}}G$.
   \end{exa}
     \noindent If $\cP$ is frame polyomino, we set $F_0=[(1,1),(1,n)]\cup  [(1,n),(m,n)]\cup \big(S_2\backslash \{(a_0,b_0),(a_k,b_k)\}\big)$, which is a facet of $\Delta(\cP)$ as proved in (3) of Proposition \ref{Prop: Grobner basis of frame polyomino and C.M.}. Moreover, observe from Discussion \ref{Discussion: facet} that $F_0$ is the unique facet in $\Delta(\cP)$ with no step and $F<_{\mathrm{lex}} F_0$ for all $F\in \cF_{\cP}$. Now, we are ready to  prove the main result of this section.    
    
	\begin{thm}\label{Proposition: The facet less the lower right corner generates the intersection}
    Let $\cP$ be a frame polyomino and $\Delta(\cP)$ be the simplicial complex attached to $\cP$. Suppose that $\cF_{\cP}$ is lexicographically ordered in descending and consider a facet $F\neq F_0$ of $\Delta(\cP)$. Set $\cS(F)=\{G\in \cF_{\cP}: F<_{\mathrm{lex}}G\}$ and $\cK_F=\{F\backslash\{v\}:v\ \text{is the lower right corner of a step of}\ F\}$. Then:
    \begin{enumerate}
        \item $\langle \cS(F)\rangle \cap \langle F\rangle=\langle \cK_F\rangle$ and, in particular, $\cF_{\cP}$ forms a shelling order of $\Delta(\cP)$;
        \item the $i$-th coefficient of the $h$-polynomial of $K[\cP]$ is the number of the facets of  $\Delta(\cP)$ having $i$ steps.
    \end{enumerate}
	\end{thm}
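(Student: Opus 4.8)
The plan is to prove (1)---that $F_0,F_1,\dots,F_r$ is a shelling of $\Delta(\cP)$ whose restriction faces are the $\cK_{F_j}$---and then to deduce (2) from the McMullen--Walkup formula in Proposition~\ref{Thm:McMullen-Walkup}. First some reductions. Since $\lt_{<}(I_{\cP})$ is the Stanley--Reisner ideal of $\Delta(\cP)$ and passing to an initial ideal preserves the Hilbert function, the $h$-polynomial of $K[\cP]$ equals that of $K[\Delta(\cP)]$, so it suffices to compute the $h$-vector of $K[\Delta(\cP)]$, to which Proposition~\ref{Thm:McMullen-Walkup} applies; by Proposition~\ref{Prop: Grobner basis of frame polyomino and C.M.} the complex $\Delta(\cP)$ is pure, hence all its facets have the same cardinality. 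Two elementary facts are used repeatedly: distinct steps of a facet $F$ have distinct lower right corners (immediate from conditions (2) and (3) of Definition~\ref{Definition: Step of a facet}), so $\vert\cK_F\vert$ is exactly the number of steps of $F$; and $F_0$ is the \emph{only} stepless facet, since by Discussion~\ref{Discussion: facet} a facet with no step has all four of its chains $\mathfrak{c}_1,\dots,\mathfrak{c}_4$ descent-free and no junction steps, which pins it down to $F_0$. In particular $\cK_{F_j}\neq\emptyset$ for every $j\geq 1$.

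For the inclusion $\langle\cK_F\rangle\subseteq\langle\cS(F)\rangle\cap\langle F\rangle$, fix a step $F'=\{(a,j),(i,j),(i,b)\}$ of $F$ with lower right corner $v=(i,j)$. By Lemma~\ref{Lemma: a step gives an inner interval}, $[(a,j),(i,b)]$ is an inner interval of $\cP$ whose anti-diagonal corners are $v$ and $(a,b)$; since $(a,j),(i,b),v\in F$, this forces $(a,b)\notin F$. I would then check that $G:=(F\setminus\{v\})\cup\{(a,b)\}$ is again a face of $\Delta(\cP)$: as $F\setminus\{v\}$ is a face, any new forbidden pair would be of the form $\{(a,b),u\}$ with $u\in F\setminus\{v\}$, and such a pair is ruled out by the ``clean-segment'' conditions (2) and (3) of Definition~\ref{Definition: Step of a facet} combined with part (2) of Proposition~\ref{Prop: Grobner basis of frame polyomino and C.M.}. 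As $\vert G\vert=\vert F\vert$ and $\Delta(\cP)$ is pure, $G$ is in fact a facet; moreover $G\neq F$ since $(a,b)\in G\setminus F$, and a direct comparison at the first vertex where the two facets differ---where the chain of $G$ has already turned while that of $F$ is still proceeding straight---gives $F<_{\mathrm{lex}}G$, so $G\in\cS(F)$. Hence $F\setminus\{v\}\in\langle\cS(F)\rangle\cap\langle F\rangle$.

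For the reverse inclusion take an arbitrary $G\in\cS(F)$; since $\langle G\rangle\cap\langle F\rangle=\langle F\cap G\rangle$, it is enough to exhibit a step of $F$ whose lower right corner is not in $G$. I would compare the chain descriptions of $F$ and of $G$ from Discussion~\ref{Discussion: facet}, scanning the common ordered vertex set until the two chains first part. Because $F<_{\mathrm{lex}}G$, at that point the chain of $G$ turns while the chain of $F$ continues straight, and the vertex $v$ at which the $F$-chain next turns is the lower right corner of a step of $F$---conditions (1)--(4) of Definition~\ref{Definition: Step of a facet} follow from the maximality of $F$, from Lemma~\ref{Lemma: a step gives an inner interval} applied to the inner interval spanned by this turn, and from the fact that the $G$-chain has bypassed $v$---so $v\notin G$ and $F\cap G\subseteq F\setminus\{v\}$. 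The delicate point, and the main obstacle of the whole proof, is exactly this last analysis: since $\cP$ is a frame rather than a parallelogram, the first divergence may occur at one of the up-to-four junction steps described in Discussion~\ref{Discussion: facet} or inside a long, non unit-cell step, so it must be treated piecewise along $\mathfrak{c}_1,\dots,\mathfrak{c}_4$ with these special positions handled separately. Granting this, $\langle\cS(F)\rangle\cap\langle F\rangle=\langle\cK_F\rangle$, and since for $F\neq F_0$ the set $\cK_F$ is a non-empty family of maximal proper faces of $F$, the order $F_0,\dots,F_r$ is a shelling; this is (1).

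Part (2) then follows by applying Proposition~\ref{Thm:McMullen-Walkup} to this shelling: for $j\geq 1$ the number $r_j$ of facets of $\langle F_0,\dots,F_{j-1}\rangle\cap\langle F_j\rangle=\langle\cK_{F_j}\rangle$ equals $\vert\cK_{F_j}\vert$, i.e. the number of steps of $F_j$, while $r_1=0$ matches $F_0$ having no step. Therefore $h_i=\vert\{j:r_j=i\}\vert$ is the number of facets of $\Delta(\cP)$ with exactly $i$ steps, and since the $h$-polynomial of $K[\cP]$ is $\sum_i h_i t^i$, the proof is complete.
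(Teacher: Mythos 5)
Your proposal is correct and follows essentially the same route as the paper: the inclusion $\langle\cK_F\rangle\subseteq\langle\cS(F)\rangle\cap\langle F\rangle$ is proved by the same swap of a step's lower right corner $(i,j)$ for the opposite anti-diagonal corner $(a,b)$ of the inner interval from Lemma~\ref{Lemma: a step gives an inner interval}, and your reverse inclusion rests on the same chain analysis from Discussion~\ref{Discussion: facet} (the paper phrases it as: a vertex $v$ can be deleted from $F$ and replaced by a lexicographically larger one only when $v$ is the lower right corner of a step, which is the mirror image of your first-divergence scan), with both treatments leaving the final piecewise check along $\mathfrak{c}_1,\dots,\mathfrak{c}_4$ at a comparable level of detail. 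Your explicit observations that distinct steps have distinct lower right corners and that $\cK_F\neq\emptyset$ for every $F\neq F_0$ (needed for the shelling definition) are worthwhile additions that the paper leaves implicit.
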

	
	\begin{proof}
 	(1) Firstly, we show that $\langle \cK_F\rangle\subseteq \langle \cS(F)\rangle \cap \langle F\rangle$. Let $F\backslash \{(i,j)\}$, where $(i,j)$ is the lower right corner of a step $F'=\{(i,b),(i,j),(a,j)\}$ of $F$. Trivially $F\backslash \{(i,j)\}\subset F$. 
 	We may assume that $(i,j), (i,b)\in V(\cP_2)\setminus (V(\cP_{[(1,1),(a_0,b_0)]})\cup V(\cP_{[(a_k,b_k),(m,n)]}))$ and $(a,j)\in V(\cP_{[(1,1),(a_0,b_0)]})$, as in Figure Figure \ref{Figure: proof for shelling order}, since all other cases can be proved by similar arguments. From Lemma \ref{Lemma: a step gives an inner interval}, $[(a,j),(i,b)]$ is an inner interval of $\cP$. Since $(i,b),(a,j),(i,j)\in F$, observe that no vertex in 
    \begin{align*}
        \mathcal{N}=& \Big([(i,1),(m,b-1)]\setminus\{(i,j)\}\Big)\cup \Big([(a+1,1),(i,j)]\setminus\{(i,j)\}\Big)\cup \\
        &\cup \Big([(1,j+1),(a-1,n)]\Big)\cup \Big([(a,j),(i,b)]\setminus \{(a,j),(i,j),(i,b)\}\Big)
    \end{align*} belongs to $F$. For instance, $\mathcal{N}$ consists of the white vertices in the blue, red, and grey parts in Figure \ref{Figure: proof for shelling order}. We consider the set $H=(F\backslash \{(i,j)\})\cup \{(a,b)\}$ and we prove that $H$ is a facet of $\Delta(\cP)$. In order to show that $H$ is a face of $\Delta(\cP)$, it is sufficient to note that there does not exist an inner interval of $\cP$ having $(a,b)$ and another vertex $w\in F\setminus \{(i,j)\}$ as anti-diagonal corners; in fact, if such an inner interval of $\cP$ exists, then $w\in \mathcal{N}$, which is a contradiction. To prove the maximality of $H$, we observe that if $H\subset K$ for some face $K$ of $\Delta(\cP)$ then there exists a facet $K_{\mathrm{max}}$ of $\Delta(\cP)$ such that $H\subset K_{\mathrm{max}}$ , so $\vert K_{\mathrm{max}}\vert >\vert H\vert =\vert F\vert$, which is a contradiction with the pureness of $\Delta(\cP)$. Therefore $H$ is a facet of $\Delta(\cP)$. Observe that $F <_{\mathrm{lex}} H$ since we replace $(i,j)$ in $F$ with $(a,b)$, so $H\in \cS(F)$. Hence $F\backslash\{(i,j)\} \in \langle \cS(F)\rangle$ and $\langle \cK_F\rangle\subseteq \langle \cS(F)\rangle \cap \langle F\rangle$.
 	
 	\begin{figure}[h!]
 		\centering
 		\includegraphics[scale=1]{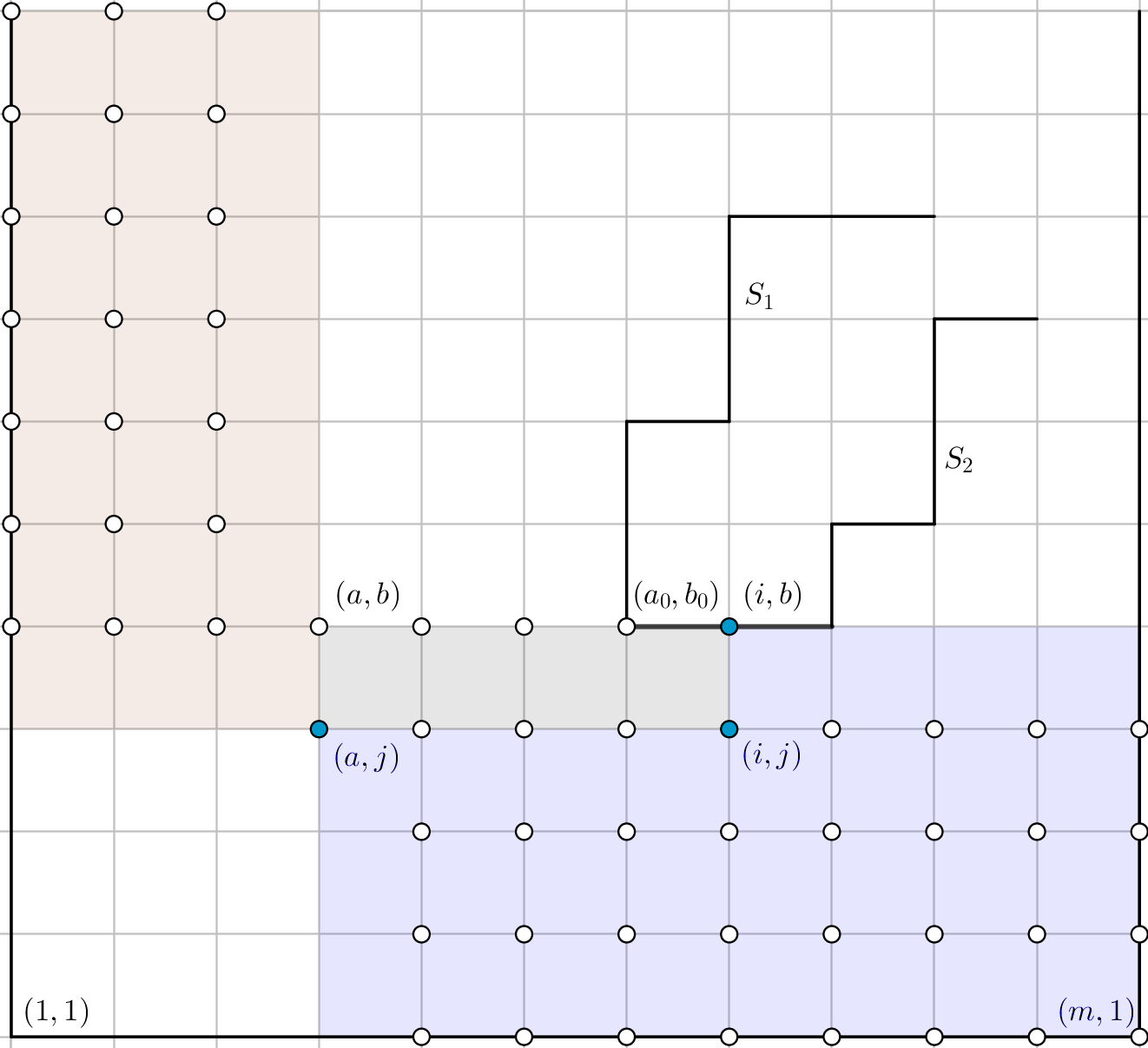}
 		\caption{An arrangement of intervals.}
 		\label{Figure: proof for shelling order}
 	\end{figure}

   \noindent Now, we prove $\langle \cS(F)\rangle \cap \langle F\rangle\subseteq \langle \cK_F\rangle$. Let $G$ be in $\langle \cS(F)\rangle \cap \langle F\rangle$. Since $G\in \langle F\rangle$, then $G\subseteq F$. Moreover, $G\neq F$ because $G\in \cS(F)$ and $F\notin \cS(F)$, so $G\subset F.$ Therefore, $G=F\backslash \{v_h:h\in [t]\}$, where $t\in [\vert F\vert]$ and $v_h\in F$ for all $h\in [t]$.  We discuss two cases.
   \begin{itemize}
       \item[(a)] Assume that $t=1$, so $G=F\setminus\{v_1\}$. We set $v_1=(i,j)$. Since $G=F\backslash \{v_1\}\in \langle \cS(F)\rangle$, then there exists a facet $H\in \cS(F)$ such that $F\backslash \{v_1\}\subset H$. Moreover, we recall that $\Delta(\cP)$ is pure and $F$ and $H$ are two facets of $\Delta(\cP)$ with $F<_{\mathrm{lex}}H$, so we can obtain $H$ from $G$ adding a vertex $w=(k,l)$, where $l>j$ or $j=l$ and $k>i$. We want to show that $v_1$ is the lower right corner of a step in $F$. Suppose by contradiction that $v_1$ is not the lower right corner of a step in $F$. With reference to Discussion \ref{Discussion: facet}, if we add to $G=F\setminus\{v_1\}$ a vertex $w=(k,l)$ (with $l>j$ or $j=l$ and $k>i$) then we find an inner interval of $\cP$ having $w$ and a vertex in $F\setminus\{v_1\}$ as anti-diagonal corners, so we have a contradiction with (2) of Proposition \ref{Prop: Grobner basis of frame polyomino and C.M.}. The operation to get $H$ from $G$ adding a vertex $w=(k,l)$ (with $l>j$ or $j=l$ and $k>i$) can be done just when $v_1$ is the lower right corner of a step $F'$ of $F$. In fact, in such a case, we can replace $v_1$ in $F$ with the anti-diagonal corner of the inner interval given by the step $F'$ to get $H$. Hence $v_1$ is necessarily the lower right corner of a step of $F$.
    \item[(b)] Assume that $t>1$, so $G=F\setminus\{v_1,v_2,\dots,v_t\}.$ Arguing as in case (a), it is easy to show that there exists $q\in [t]$ such that $v_{q}$ is the lower right corner of a step of $F$.
    \end{itemize}
    Hence we have that $G\in \langle \cK_F\rangle$ and $\langle \cS(F)\rangle \cap \langle F\rangle\subseteq \langle \cK_F\rangle$. In conclusion $\langle \cS(F)\rangle \cap \langle F\rangle = \langle \cK_F\rangle$.\\
    (2) It follows easily from (1) and Proposition \ref{Thm:McMullen-Walkup}. 
	\end{proof}

    \begin{rmk}\rm 
    The previous theorem does not hold in general. Consider the polyomino in Figure \ref{Figure: grid} and assume that $V(\cP)=[(1,1),(6,4)]$. Observe that $\cP$ is a prime polyomino (see \cite{Trento}) and $G(\cP)$ forms the reduced Gr\"obner basis of $I_{\cP}$ with respect to $<$, so $\Delta(\cP)$ is a shellable simplicial complex. The first three facets of $\Delta(\cP)$ lexicographically ordered in descending are:
    \begin{itemize}
        \item $F_0=\{(6, 4),(5, 4),(4, 4),(3, 4),(2, 4),(1, 4),(1, 3),(5, 2),(3, 2),(1, 2),(1, 1)\}$;
        \item $F_1=\{(6, 4),(5, 4),(4, 4),(3, 4),(2, 4),(1, 4),(1, 3),(5, 2),(3, 2),(3, 1),(1, 1)\}$;
        \item $F_2=\{(6, 4),(5, 4),(4, 4),(3, 4),(2, 4),(1, 4),(1, 3),(5, 2),(5, 1),(3, 1),(1, 1)\}$.  
    \end{itemize}
    Note that $F_2$ contains the two steps $\{(1,1),(3,1),(3,4)\}$ and $\{(3,1),(5,1),(5,2)\}$ but $\langle F_0,F_1\rangle\cap \langle F_2\rangle$ is generated just by $\{(6, 4), (5, 4), (4, 4), (3, 4), (2, 4), (1, 4), (1, 3), (5, 2), (3, 1), (1, 1)\}=F_2\setminus\{(5,1)\}$ and not by $F_2\setminus\{(3,1)\}$ and $F_2\setminus\{(5,1)\}$.

     \begin{figure}[h!]
    	\centering
    	\includegraphics[scale=1.1]{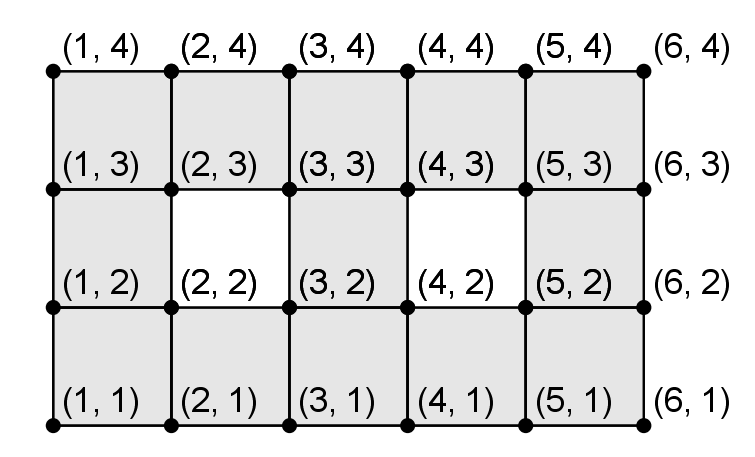}
    	\caption{A polyomino.}
    	\label{Figure: grid}
    \end{figure}
    \end{rmk}

    \section{Hilbert series and rook polynomial of frame polyominoes}

    \noindent In this section we study the Hilbert-Poincar\'e series of the coordinate ring attached to a frame polyomino. Let us start by introducing some definitions, following \cite{Parallelogram Hilbert series}.\\
    Let $\cP$ be a polyomino. Two rooks $R_1$ and $R_2$ are in \textit{attacking position} in $\cP$ if there exists a block $[A,B]$ of $\cP$ such that $R_1$ and $R_2$ are placed in $A$ and $B$ respectively. In such a case we say that $R_1$ and $R_2$ are two \textit{attacking rooks}. Moreover, two rooks are in \textit{non-attacking position} in $\cP$ (or they are two \textit{non-attacking rooks}) if they are not in attacking position in $\cP$. For instance see Figure \ref{Figure: exa attacking rooks}.
    
    \begin{figure}[h]
		\centering
		\subfloat[Attacking rooks]{\includegraphics[scale=0.8]{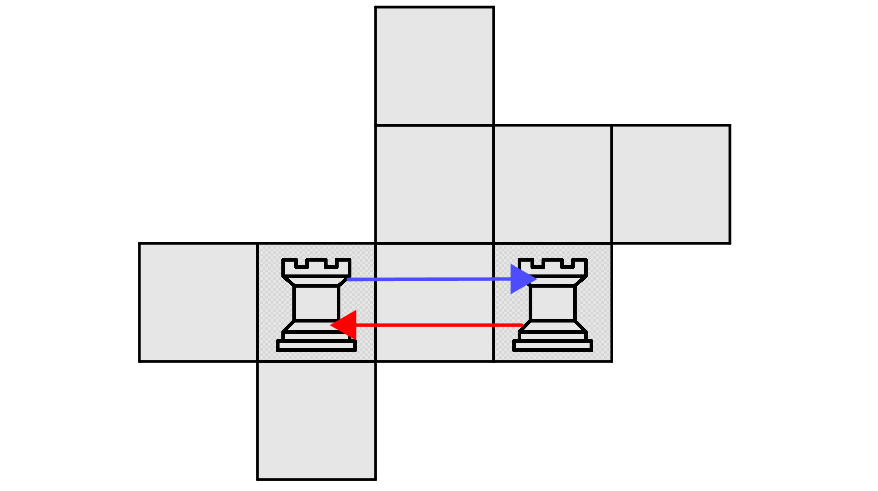}}\quad
         \subfloat[Non-attacking rooks]{\includegraphics[scale=0.8]{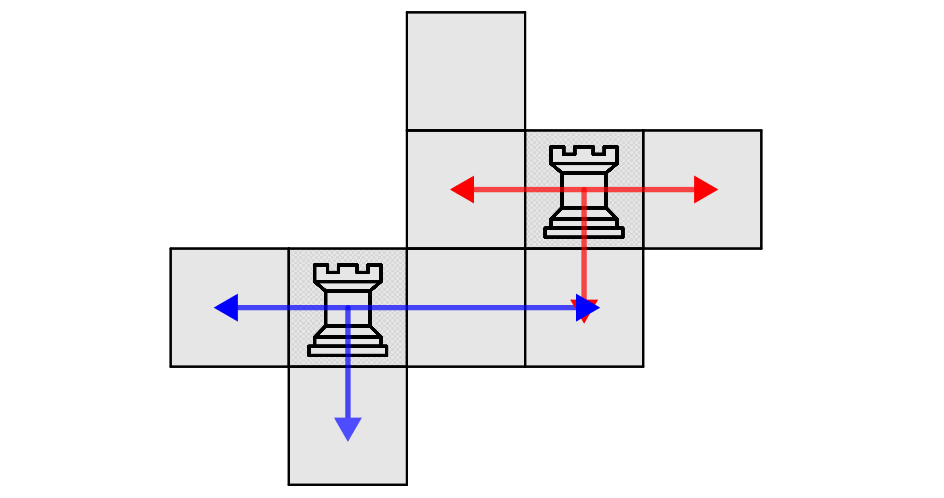}}	
  \caption{Positions of two rooks in a polyomino.}
		\label{Figure: exa attacking rooks}
	\end{figure}

    \noindent A \textit{$j$-rook configuration} in $\cP$ is a configuration of $j$ rooks which are arranged in $\cP$ in non-attacking positions. Figure~\ref{Figura:esempio rook configuration} shows a 6-rook configuration.
    
	\begin{figure}[h]
		\centering
		\includegraphics[scale=0.85]{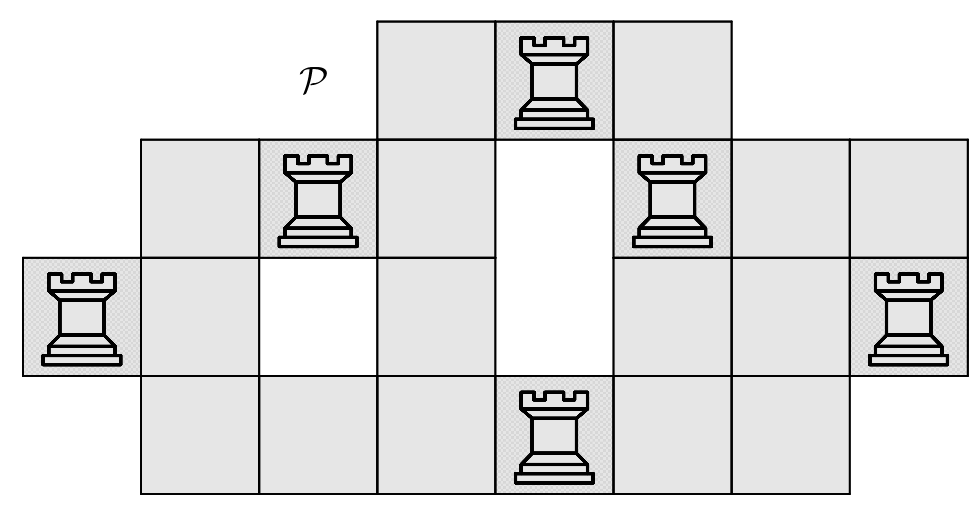}
		\caption{An example of a $6$-rook configuration in $\cP$.}
		\label{Figura:esempio rook configuration}
	\end{figure}
	
	\noindent The rook number $r(\cP)$ is the maximum number of rooks that can be placed in $\cP$ in non-attacking position. We denote by $\cR_j$ the set of all $j$-rook configurations in $\cP$ for all $j\in [r(\cP)]$ and we set conventionally $R_0=\emptyset$. Note that $\cR_0\cup \cR_1\dots \cup\cR_{r(\cP)}$ is a simplicial complex, called \textit{rook complex}.\\
 Two non-attacking rooks in $\cP$ are said to be in \textit{switching position} or they are called \textit{switching rooks} if they are placed in the diagonal (resp. anti-diagonal) cells of $\cP_I$, where $I$ is an inner interval of $\cP$.
 In such a case we say that the rooks are in a diagonal (resp. anti-diagonal) position.\\
 Fix $j\in \{0,\dots, r(\cP)\}$. Let $F\in \cR_j$ and $R_1$ and $R_2$ be two switching rooks of $F$ in diagonal (resp. anti-diagonal) position in $\cP_I$, where $I$ is an inner interval of $\cP$. Let $R_1'$ and $R_2'$ be the rooks in anti-diagonal (resp. diagonal) cells of $\cP_I$. Then the set $(F\backslash \{R_1, R_2\}) \cup \{R_1', R_2'\}$ belongs to $\cR_j$. The operation of replacing $R_1$ and $R_2$ by $R_1'$ and $R_2'$ is called \textit{switch of $R_1$ and $R_2$}. This induce the following equivalence relation $\sim$ on $\cR_j$: let $F_1, F_2 \in \cR_j$, so $F_1\sim F_2$ if $F_2$ can be obtained from $F_1$ after some switches. Look at Figure \ref{Figure: exa switch attacking rooks} for an example of four $3$-rook configurations which are equivalent with respect to $\sim$. 
 
 \begin{figure}[h!]
		\centering
		\subfloat{\includegraphics[scale=0.8]{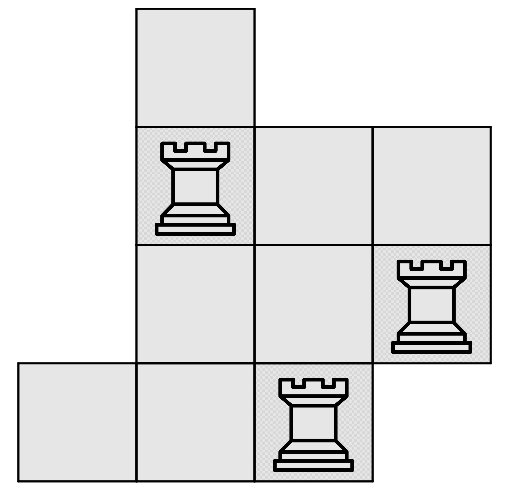}}\qquad
         \subfloat{\includegraphics[scale=0.8]{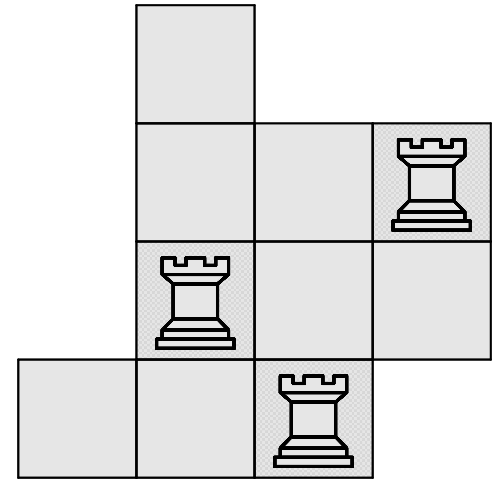}}\qquad
        \subfloat{\includegraphics[scale=0.8]{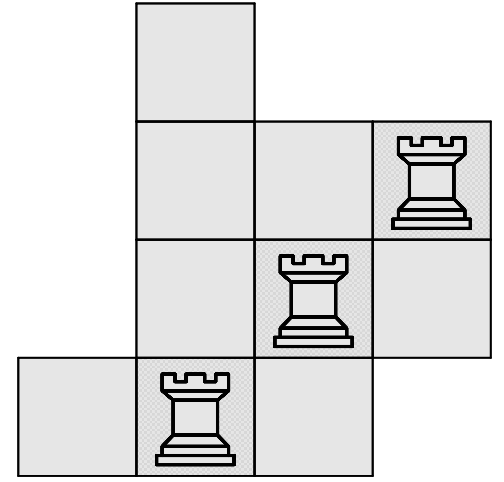}}	\qquad
        \subfloat{\includegraphics[scale=0.8]{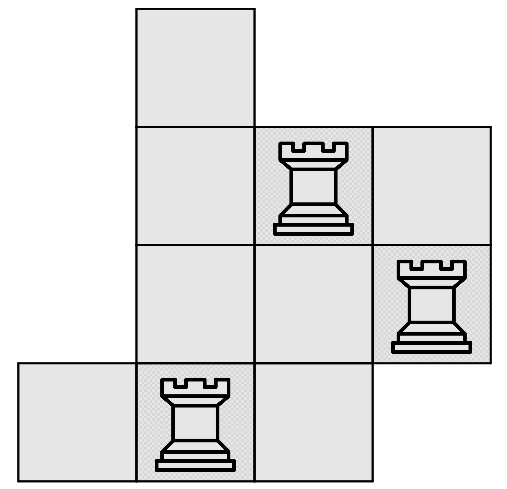}}	
  \caption{}
		\label{Figure: exa switch attacking rooks}
	\end{figure}
 
 \noindent Let $\tilde{\cR}_j = \cR_j/\sim$ be the quotient set. We set $\tilde{r}_j=\vert \tilde{\cR}_j\vert $ for all $j\in [r(\cP)]$; conventionally $\tilde{r}_0=1$. The \textit{switching rook-polynomial} of $\cP$ is the polynomial in $\mathbb{Z}[t]$ defined as $\tilde{r}_{\cP}(t)=\sum_{j=0}^{r(\cP)}\tilde{r}_jt^j$.	\\
 
\noindent In the following proposition, we show a natural representative of an equivalence class of $\tilde {\cR}_j$ associated with a frame polyomino.
 
	\begin{prop} \label{Prop: equivalent class for switching rooks}
 
    Let $\cP$ be a frame polyomino and $\cR$ be a $j$-rook configuration in $\cP$, with $j\in [r(\cP)]$. Then there exists a $j$-rook configuration $\cC$ in $\cP$ such that $\cC \sim \cR$ and any two switching rooks of $\cC$ are placed in diagonal position. 	
 \end{prop}
	
	\begin{proof}
  If $j=1$, then the claim follows trivially. Suppose that $2\leq j\leq r(\cP)$. We distinguish the following cases with reference to Figure \ref{Figure: proof for equivalance rook}.
    \begin{itemize}
    \item[Case 1] Assume that no rook of $\cR$ is in $\cQ$. Then, every rook of $\cR$ is placed in $\cP_1\setminus \cQ$ or in $\cP_2 \setminus \cQ$. Moreover, the rooks in $\cP_1\setminus \cQ$ are never in attacking and switching position with the rooks in $\cP_2 \setminus \cQ$. Hence, the claim follows immediately from \cite[Lemma 3.12]{Parallelogram Hilbert series} applying to $\cP_1\setminus \cQ$ and to $\cP_2 \setminus \cQ$. 
    \item[Case 2] Suppose that the rooks of $\cR$ belong just to $\cQ$. Then the claim follows trivially from \cite[Lemma 3.12]{Parallelogram Hilbert series}, since the cell intervals $\cP_{[(1,1),(a_0,b_0)]}$ and $\cP_{[(a_k,b_k),(m,n)]}$ associated respectively to $[(1,1),(a_0,b_0)]$ and $[(a_k,b_k),(m,n)]$ are parallelogram polyominoes.
    \item[Case 3] The same conclusion holds if all rooks of $\cR$ are either in $\cP_1$ or in $\cP_2$.
    \item[Case 4] Suppose that there exists a rook of $\cR$ in $\cP_1\setminus \cQ$, another one 
    in $\cP_2\setminus \cQ$ and another in $\cQ$. We consider non-empty sets of rooks $\cR_1,\cR_2,\tilde{\cR} \subset \cR$ such that the rooks in $\cR_1$, $\cR_2$ and $\tilde{\cR}$ are placed in $\cP_1$, $\cP_2$ and $\cQ$ respectively, $\cR=\cR_1\cup\cR_2\cup \tilde{\cR}$ and $\tilde{\cR}=\cR_1\cap\cR_2$. We proceed in two steps.
    \begin{itemize}
        \item[Step 1.] First of all, we focus on $\cP_1$ and $\cR_1$. Since $\cP_1$ is a parallelogram polyomino, then from \cite[Lemma 3.12]{Parallelogram Hilbert series} there exists a $\vert \cR_1\vert$-rook configuration $\cC_1$ in $\cP_1$ such that $\cC_1\sim\cR_1$ and any two switching rooks in $\cC_1$ are placed in diagonal position. Denote by $\cW$ the set of the rooks of $\cC_1$ placed in $\cQ$. We show that $\vert \cW\vert =\vert \tilde{\cR}\vert$. With reference to Figure \ref{Figura:arragement switching rook}, let $R_1$ and $R_2$ be two switching rooks of $\cR_1$ in anti-diagonal position and $R_1'$ and $R_2'$ the related switching rooks in a diagonal position.   
    
    \begin{figure}[h]
		\centering
		\includegraphics[scale=0.7]{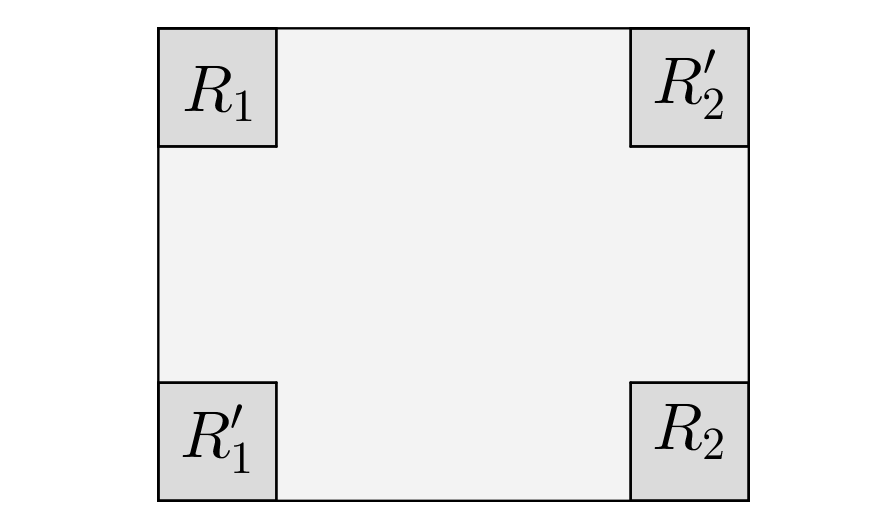}
		\caption{Placement of switching rooks of $\cR_1$.}
		\label{Figura:arragement switching rook}
	\end{figure}
    
    \noindent From the structure of $\cP_1$ the following remarks follow:
    \begin{itemize}
        \item[-] if $R_1$ and $R_2$ are placed in $\cP_1\setminus \cQ$, then $R_1'$ and $R_2'$ are also in $\cP_1\setminus\cQ$;
       \item[-] if $R_1$ and $R_2$ are in $\cQ$, then $R_1'$ and $R_2'$ are also in $\cQ$;
        \item[-] if $R_1$ and $R_2$ are placed respectively in $\cP_1\setminus \cP_{[(1,1),(a_0,b_0)]}$ and $\cP_{[(1,1),(a_0,b_0)]}$, then $R_1'$ and $R_2'$ are respectively in $\cP_{[(1,1),(a_0,b_0)]}$ and $\cP_1\setminus \cP_{[(1,1),(a_0,b_0)]}$;
        \item[-] if $R_1$ and $R_2$ are respectively in $\cP_1\setminus\cP_{[(a_k,b_k),(m,n)]}$ and $\cP_{[(a_k,b_k),(m,n)]}$, then $R_1'$ and $R_2'$ are respectively in $\cP_1\setminus\cP_{[(a_k,b_k),(m,n)]}$ and $\cP_{[(a_k,b_k),(m,n)]}$.
    \end{itemize}
      Therefore it easy to see that $\vert \cW\vert =\vert \tilde{\cR}\vert$. 
      \item[Step 2] Now, we consider $\cP_2$ and the $\vert \cR_2\vert$-rook configuration $(\cR_2\setminus \tilde{\cR})\cup \cW$ in $\cP_2$. As before, since $\cP_2$ is a parallelogram polyomino, there exists a $\vert\cR_2\vert$-rook configuration $\cC_2$ in $\cP_2$ such that $\cC_2\sim (\cR_2\setminus \tilde{\cR})\cup\cW$ and any two switching rooks in $\cC_2$ are placed in diagonal position. Moreover, if $\mathcal{Z}$ is the subset of $\cC_2$ of the rooks placed in $\cQ$, then $\vert \mathcal{Z}\vert=\vert \cW\vert =\vert \tilde{\cR}\vert$.
          \end{itemize}

    \noindent Set $\cC=(\cC_1\setminus \cW)\cup \cC_2$ and we show that $\cC$ is the desired $j$-rook configuration. Observe trivially that $\cC\sim \cR$ because we get $\cC$ from $\cR$ with suitable switches. We need to prove that any two switching rooks in $\cC$ are in a diagonal position. Let $T_1$ and $T_2$ be two switching rooks in $\cC$. We analyze the following cases.\\
    (1) If $T_1$ and $T_2$ are placed in $\cP_1\setminus \cQ$ then they are in diagonal position by Step 1, as well as if $T_1$ and $T_2$ are in $\cP_2$ by Step 2.\\
    (2) Suppose that $T_1$ is in $\cP_{[(1,1),(a_0,b_0)]}$ and $T_2$ in  $\cP_1\setminus \cQ$. We denote by $[B_1,B_r]$ the maximal vertical block of $\cP_{[(1,1),(a_0,b_0)]}$ where the rook $T_1$ is placed. After Step 1, a rook $R$ is placed in $[B_1,B_r]$. In fact, if no rook is in $[B_1,B_r]$ after Step 1, then there is no rook in $[B_1,B_r]$ after Step 2; but this is not possible since we are assuming that the rook $T_1$ is in $[B_1,B_r]$ after Step 2.
    Moreover, by Step 1 we have that $R$ and $T_2$ are in diagonal position (see Figure \ref{Figure: Case 4 proof} (A)). Now, observe that applying Step 2 we remove $R$ and we put $T_1$ in $[B_1,B_r]$. Since $R$ and $T_2$ were in diagonal position, then also $T_1$ and $T_2$ are in diagonal position (see Figure \ref{Figure: Case 4 proof} (B)). 
    \begin{figure}[h!]
		\centering
		\subfloat[After Step 1]{\includegraphics[scale=0.78]{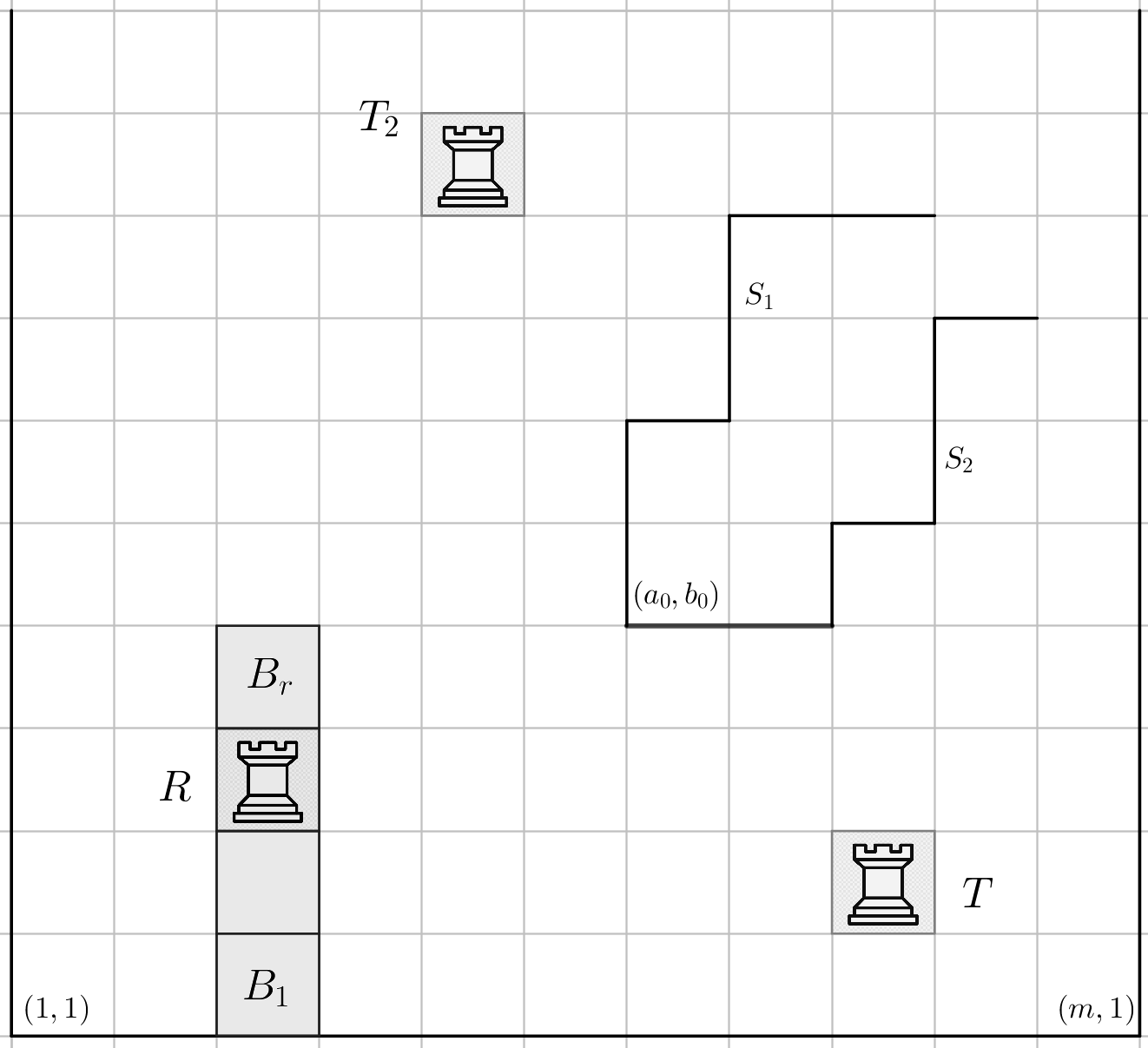}}\
         \subfloat[After Step 2]{\includegraphics[scale=0.78]{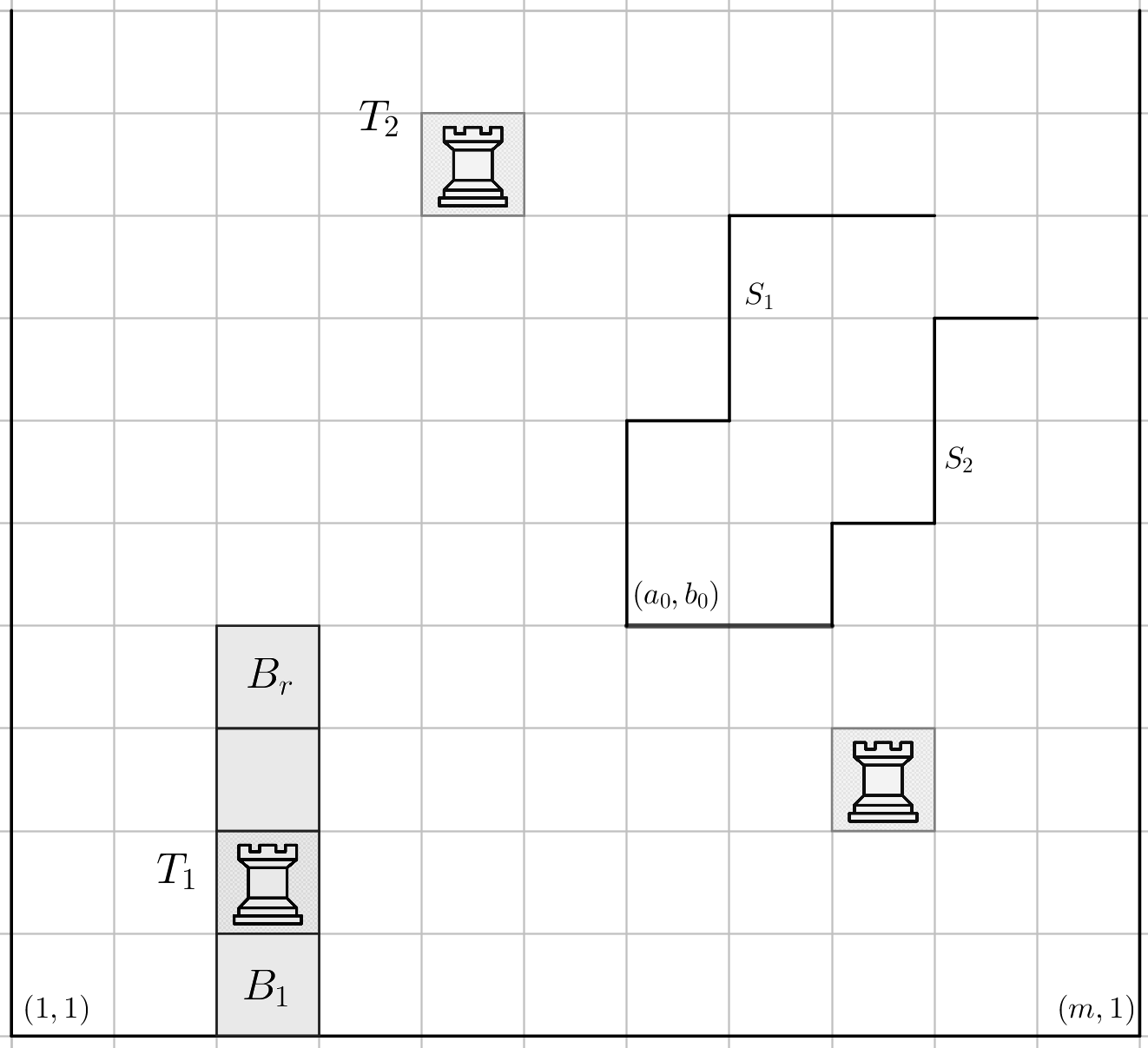}}	
  \caption{$T_1$ in $\cP_{[(1,1),(a_0,b_0)]}$ and $T_2$ in $\cP_1\setminus \cQ$.}
		\label{Figure: Case 4 proof}
	\end{figure}  
 
     \noindent (3) If $T_1$ is in $\cP_1\setminus \cQ$ and $T_2$ in $\cP_{[(a_k,b_k),(m,n)]}$, then we get the claim arguing as done in (2).\\
     
    \noindent Therefore, $\cC$ is a $j$-rook configuration such that $\cC \sim \cR$ and any two switching rooks of $\cC$ are placed in a diagonal position, as claimed.  	
    \end{itemize}

    \end{proof}
	
    \begin{defn}\rm 
    Let $\cP$ be a frame polyomino and $\cR$ be a $j$-rook configuration in $\cP$, with $j\in[r(\cP)]$. We say that the $j$-rook configuration $\cC$ defined in Proposition \ref*{Prop: equivalent class for switching rooks} is a \textit{canonical configuration} of $\cR$.
    \end{defn} 
   
    \noindent Now we provide some lemmas and a remark which is useful to prove Theorem \ref{Theorem: bijection between facets and canonical positions}.
    
	\begin{lemma}\label{Lemma: If two l.r.corners are on the Horiz. edge then...}
	Let $\cP$ be a frame polyomino and $\Delta(\cP)$ be the simplicial complex attached to $\cP$. Let $F$ be a facet of $\Delta(\cP)$ with $j$ steps, where $j\geq 2$.  
	If $v$ and $w$ are the lower right corners of two distinct steps of $F$ belonging on the same maximal horizontal edge interval of $\cP$ with $v<w$ then $v\in V(\cP_{[(1,1),(a_0,b_0)]})$ and $w\notin V(\cP_{[(1,1),(a_0,b_0)]})$.
	\end{lemma}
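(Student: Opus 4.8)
\emph{Plan.} The idea is to read off $v$ and $w$ from the four‑chain description of $F$ in Discussion~\ref{Discussion: facet} and to use that a north‑east lattice path meets each height in a single (possibly one‑point) horizontal run and can turn east‑then‑north at most once there. Write $F=\mathfrak{c}_1\cup\mathfrak{c}_2\cup\mathfrak{c}_3\cup\mathfrak{c}_4$ as in that discussion, with $f=(i,j)$ and $g=(t,l)$; recall that $\mathfrak{c}_1=F\cap V(\cP_{[(1,1),(a_0,b_0)]})$ lives at heights $\le b_0$ only, that $\mathfrak{c}_4$ lives at heights $\ge b_k$ only, that $\mathfrak{c}_2\subseteq\cP_1\setminus\cQ$ runs from height $b_0+1$ up to the height $l$ of $g$ (so $l\ge b_k$) and at heights below $b_k$ lies in the part of $\cP_1\setminus\cQ$ to the left of the removed parallelogram $\cS$, and that $\mathfrak{c}_3\subseteq\cP_2\setminus\cQ$ runs from the height $j$ of $f$ (so $j\le b_0$) up to height $b_k-1$, at heights above $b_0$ lies in the part of $\cP_2\setminus\cQ$ to the right of $\cS$, and—being a north‑east path issued from $(a_0+1,j)$—has all its vertices with first coordinate $\ge a_0+1$. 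Besides the descent corners internal to the chains, the only step corners of $F$ are the corners of the at most four transition steps joining consecutive chains, and these sit at heights $j$, $l$ and $b_k-1$; ascribing each transition step to the chain carrying its corner, one checks that every single chain $\mathfrak{c}_r$ produces at most one step corner at any prescribed height.

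Let $d$ be the common height of $v$ and $w$. First I would rule out $d>b_0$. If $b_0<d<b_k$, then rows $d-1$ and $d$ both meet $\cS$, and since $\cS$ is connected its cells in those two rows overlap in a column; hence the horizontal edge $y=d$ of $\cP$ is disconnected into a left piece, which carries only vertices of $\mathfrak{c}_2$, and a right piece, which carries only vertices of $\mathfrak{c}_3$. As $d$ is an interior height of $\mathfrak{c}_2$ and (apart from the single transition corner $(t,b_k-1)$, which lies on the right piece) of $\mathfrak{c}_3$, each of the two pieces carries at most one step corner, contradicting that $v\ne w$ lie on the same one. If $d\ge b_k$, then only $\mathfrak{c}_2$ and $\mathfrak{c}_4$ reach height $d$; since $\mathfrak{c}_2$ stops exactly at the height $l$ where $\mathfrak{c}_4$ begins, for $d<l$ only $\mathfrak{c}_2$ can contribute a step corner at height $d$, for $d>l$ only $\mathfrak{c}_4$ can, and for $d=l$ only $\mathfrak{c}_4$ together with the lone transition step $\mathfrak{c}_2\to\mathfrak{c}_4$ can, these two not occurring simultaneously—so again there is at most one step corner at height $d$, a contradiction. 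Therefore $d\le b_0$, and the maximal horizontal edge interval through $v$ and $w$ is the full segment $[(1,d),(m,d)]$.

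Finally, at heights $\le b_0$ the only chains present are $\mathfrak{c}_1$, occupying heights $\le j$, and $\mathfrak{c}_3$, occupying heights $\ge j$; since each of them produces at most one step corner per height, the two distinct corners $v,w$ at height $d$ force $d=j$ and force one of them to come from $\mathfrak{c}_1$—hence to lie in $V(\cP_{[(1,1),(a_0,b_0)]})$, because $\mathfrak{c}_1\subseteq V(\cP_{[(1,1),(a_0,b_0)]})$—and the other to come from $\mathfrak{c}_3$—hence to have first coordinate $\ge a_0+1$, and, having second coordinate $\le b_0$, not to lie in $V(\cP_{[(1,1),(a_0,b_0)]})$. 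Relabelling so that $v$ is the corner coming from $\mathfrak{c}_1$ gives $v\in V(\cP_{[(1,1),(a_0,b_0)]})$ and $w\notin V(\cP_{[(1,1),(a_0,b_0)]})$. The delicate part of the argument is this transition‑step bookkeeping—checking at each threshold height $j$, $b_k-1$, $l$, and in the degenerate cases where two of these coincide or one of them equals $b_0$ or $b_k$, that no transition step contributes a second step corner on the same maximal horizontal edge interval—together with the use of the connectedness of $\cS$ to guarantee that the line $y=d$ is genuinely split whenever $b_0<d<b_k$.
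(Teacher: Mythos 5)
Your route is genuinely different from the paper's. The paper argues directly by contradiction: if $v$ and $w$ were both inside, or both outside, $V(\cP_{[(1,1),(a_0,b_0)]})$, then the top vertex $g_2$ of the step at $v$ and the corner $w$ would be anti-diagonal corners of an inner interval, contradicting the face condition. You instead count step corners height by height via the four-chain decomposition of Discussion~\ref{Discussion: facet}. The difficulty is that your pivotal claim --- that each chain $\mathfrak{c}_r$ contributes at most one step corner at any given height, with only the four transition corners as exceptions --- is essentially the content of the lemma, and it is asserted (``one checks that \dots'') rather than proved; the ``delicate bookkeeping'' you defer in your last sentence is the whole problem. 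Concretely, Definition~\ref{Definition: Step of a facet} only requires the corner $(c,d)$ to have \emph{some} nearest upper neighbour of $F$ on the line $x=c$. For an interior vertex $(q,j)$ of the horizontal run of $\mathfrak{c}_3$ at height $j\le b_0$ (so $q>a_0$), that upper neighbour can be supplied by $\mathfrak{c}_2$ on the far side of the hole: every rectangle joining such a point $(q,h)\in\mathfrak{c}_2$ (above $\cS$) to a point of $\mathfrak{c}_3$ further east at height $j$ (below $\cS$) straddles the hole and so is not an inner interval, hence the face condition does not forbid the configuration. Then both $(q,j)$ and the east end of the run are lower right corners of steps at height $j$, and neither lies in $V(\cP_{[(1,1),(a_0,b_0)]})$ --- exactly what your count, and the lemma, exclude.

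This is not hypothetical. Take $I=[(1,1),(6,6)]$ and $\cS=\cP_{[(2,2),(4,4)]}$, so $a_0=b_0=2$, $a_k=b_k=4$. Granting the description of $\lt_{<}(I_{\cP})$ in Proposition~\ref{Prop: Grobner basis of frame polyomino and C.M.}, the set $\{(1,1),(2,1),(3,1),(4,1),(4,2),(3,4),(6,6)\}$ is a face of $\Delta(\cP)$: each anti-diagonal pair in it, e.g.\ $(3,4)$ and $(4,1)$, spans a rectangle ($[(3,1),(4,4)]$) meeting the hole. In any facet $F$ containing this face, $(3,2)$ is excluded by $(4,1)$ and $(3,3)\notin V(\cP)$, so $\{(2,1),(3,1),(3,4)\}$ and $\{(3,1),(4,1),(4,2)\}$ are both steps of $F$, with corners $(3,1)$ and $(4,1)$ on the same maximal horizontal edge interval and both outside $V(\cP_{[(1,1),(2,2)]})$. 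Note that the paper's own Case~3 silently assumes the rectangle spanned by $g_2$ and $w$ is an inner interval, which fails in precisely this situation, so you are not missing a trick that the paper supplies; but as written your argument does not establish the lemma, and any repair must explicitly rule out, or otherwise handle, upper neighbours that jump over $\cS$.
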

								
	\begin{proof}
	Let $G=\{g_1,v,g_2\}$ and $U=\{u_1,w,u_2\}$ be the two steps of $F$ having respectively $v$ and $w$ as lower right corners. 
    Since $v$ and $w$ are on the same maximal horizontal edge interval $H$ of $\cP$ and $v<w$, then $g_1,u_1\in H$ and $g_1<v\leq u_1<w$. By contradiction suppose that $v\notin V(\cP_{[(1,1),(a_0,b_0)]})$ or $w\in V(\cP_{[(1,1),(a_0,b_0)]})$. We examine the three cases. If $v\notin V(\cP_{[(1,1),(a_0,b_0)]})$ and $w\in V(\cP_{[(1,1),(a_0,b_0)]})$, then necessarily $w<v$, which is a contradiction with the assumption that $v<w$. Assume that $v\in V(\cP_{[(1,1),(a_0,b_0)]})$ and $w\in V(\cP_{[(1,1),(a_0,b_0)]})$. In such a case, for the structure of $\cP$, $g_2$ and $w$ are the anti-diagonal corner of an inner interval of $\cP$ but $g_2, w\in F$, so we get a contradiction. The same comes when 
	$v\notin V(\cP_{[(1,1),(a_0,b_0)]})$ and $w\notin V(\cP_{[(1,1),(a_0,b_0)]})$. 
	In every case, a contradiction arises. Hence the conclusion follows
 	\end{proof}
 
 	\begin{lemma}\label{Lemma: If two l.r.corners are on the Vert. edge then...}
 	Let $\cP$ be a frame polyomino and $\Delta(\cP)$ be the simplicial complex attached to $\cP$. Let $F$ be a facet of $\Delta(\cP)$ with $j$ steps, with $j\geq 2$. 
 	If $v$ and $w$ are the lower right corners of two distinct steps of $F$ belonging on the same maximal vertical edge interval of $\cP$ with $w<v$ then $v\in V(\cP_{[(a_k,b_k),(m,n)]})$ and $w\notin V(\cP_{[(a_k,b_k),(m,n)]})$.
 	\end{lemma}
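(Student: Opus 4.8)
The plan is to deduce the statement from Lemma~\ref{Lemma: If two l.r.corners are on the Horiz. edge then...} by means of the anti-diagonal symmetry of a frame polyomino. Consider the order-reversing bijection $\sigma\colon\ZZ^2\to\ZZ^2$ given by $\sigma(x,y)=(n+1-y,\,m+1-x)$. It maps $\cP_I=\cP_{[(1,1),(m,n)]}$ onto $\cP_{[(1,1),(n,m)]}$, and it carries a north-east lattice path to the reverse of a north-east lattice path, so it sends the removed parallelogram polyomino $\cS$ onto a parallelogram polyomino with endpoints $\sigma(a_k,b_k)$ and $\sigma(a_0,b_0)$; since $1<a_0<a_k<m$ and $1<b_0<b_k<n$, the polyomino $\cP'=\sigma(\cP)$ is again a frame polyomino. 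Because $\sigma$ sends a proper interval with anti-diagonal corners $c,d$ to a proper interval with anti-diagonal corners $\sigma(c),\sigma(d)$, it maps the inner intervals of $\cP$ onto those of $\cP'$; using the description of $\lt_{<}(I_{\cP})$ and of $\lt_{<}(I_{\cP'})$ provided by Proposition~\ref{Prop: Grobner basis of frame polyomino and C.M.}, it follows that $\sigma$ induces an isomorphism of simplicial complexes $\Delta(\cP)\xrightarrow{\sim}\Delta(\cP')$.

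Next I would check Definition~\ref{Definition: Step of a facet} clause by clause, obtaining that $\sigma$ takes a step $\{(a,j),(i,j),(i,b)\}$ of a facet $F$ of $\Delta(\cP)$ to the step $\{\sigma(i,b),\sigma(i,j),\sigma(a,j)\}$ of $\sigma(F)$, with lower right corner $\sigma(i,j)$; that $\sigma$ interchanges the maximal horizontal and the maximal vertical edge intervals of $\cP$; and that $\sigma$ carries the top-right box $\cP_{[(a_k,b_k),(m,n)]}$ of $\cP$ onto the bottom-left box of $\cP'$. Granting these correspondences, the lemma is immediate: if $F$ is a facet of $\Delta(\cP)$ with $k\geq 2$ steps and $v,w$ are the lower right corners of two distinct steps of $F$ lying on a common maximal vertical edge interval of $\cP$, then $\sigma(F)$ is a facet of $\Delta(\cP')$ and $\sigma(v),\sigma(w)$ are the lower right corners of two distinct steps of $\sigma(F)$ lying on a common maximal horizontal edge interval of $\cP'$; Lemma~\ref{Lemma: If two l.r.corners are on the Horiz. edge then...} applied to $\cP'$ gives $\sigma(v)$ in the bottom-left box of $\cP'$ and $\sigma(w)$ outside it, and applying $\sigma^{-1}$ yields $v\in V(\cP_{[(a_k,b_k),(m,n)]})$ and $w\notin V(\cP_{[(a_k,b_k),(m,n)]})$, as desired.

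The only real work --- and the step I expect to require care --- is the bookkeeping of the previous paragraph: verifying against the definitions that $\sigma$ preserves the notions of face of $\Delta(\cP)$, of step, and of lower right corner, and that it realizes the stated correspondence of boxes. Alternatively, one can give a self-contained proof by reproducing the argument of Lemma~\ref{Lemma: If two l.r.corners are on the Horiz. edge then...} with $\cP_{[(1,1),(a_0,b_0)]}$ replaced throughout by $\cP_{[(a_k,b_k),(m,n)]}$: writing $G=\{g_1,v,g_2\}$ and $U=\{u_1,w,u_2\}$ for the two steps, one may assume $w<u_2\leq v<g_2$ along the common maximal vertical edge interval, and then exclude by contradiction the case where $v\notin V(\cP_{[(a_k,b_k),(m,n)]})$ while $w\in V(\cP_{[(a_k,b_k),(m,n)]})$ (immediate, since the top-right box is upward closed along a column), the case where both $v,w$ lie in that box, and the case where neither does. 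In the last two cases one exhibits an inner interval of $\cP$ with anti-diagonal corners $g_1,w\in F$, using the structural fact that each of the four arms of a frame polyomino --- the cells lying weakly above $y=b_k$, weakly below $y=b_0$, weakly left of $x=a_0$, or weakly right of $x=a_k$ --- is a rectangular sub-polyomino of $\cP$, so that any interval contained in one of them is automatically inner; the delicate point on this route is the case $v,w\notin V(\cP_{[(a_k,b_k),(m,n)]})$, where one invokes the facet description of Discussion~\ref{Discussion: facet} to guarantee that $v$, $w$ and $g_1$ all lie in one and the same arm.
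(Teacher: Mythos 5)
Your proposal is correct. The paper's own proof of this lemma is a single sentence — ``argue similarly as in the horizontal case'' — so your \emph{alternative} route (redoing the horizontal argument with $\cP_{[(1,1),(a_0,b_0)]}$ replaced by $\cP_{[(a_k,b_k),(m,n)]}$, and the contradiction now produced by an inner interval with anti-diagonal corners $g_1,w$ rather than $g_2,w$) is exactly what the authors intend. Your \emph{primary} route goes further than the paper: it makes the implicit symmetry explicit via the order-reversing anti-diagonal reflection $\sigma(x,y)=(n+1-y,m+1-x)$, checks that the class of frame polyominoes is closed under $\sigma$, that $\sigma$ preserves inner intervals together with their anti-diagonal corners (hence, by Proposition \ref{Prop: Grobner basis of frame polyomino and C.M.}(2), induces an isomorphism $\Delta(\cP)\cong\Delta(\cP')$), and that it exchanges steps, lower right corners, maximal horizontal/vertical edge intervals, and the two corner boxes. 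This buys a genuinely rigorous reduction to Lemma \ref{Lemma: If two l.r.corners are on the Horiz. edge then...} at the cost of some bookkeeping, and it also silently resolves the one point the terse statement leaves ambiguous: since the hypothesis is symmetric in $v$ and $w$ but the conclusion is not, one must fix the convention that $v$ is the higher of the two corners on the common vertical edge interval (this is indeed how the lemma is invoked in the proof of Theorem \ref{Theorem: bijection between facets and canonical positions}), and your reduction forces exactly that convention because $\sigma$ reverses the order along the edge interval. Your flagged ``delicate case'' ($v,w$ both outside the top-right box) is real — the paper glosses over it in the horizontal proof as well — but it is handled either by your symmetry reduction or by observing, as you do, that a step forces an inner interval (Lemma \ref{Lemma: a step gives an inner interval}), which confines the relevant lower right corners to a common rectangular arm.
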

	
	\begin{proof}
	The claim follows arguing similarly as done in the proof of Lemma \ref{Lemma: If two l.r.corners are on the Horiz. edge then...}.
	\end{proof}

	\begin{rmk}\rm \label{Remark: facets vs chains in parallelogram}
	Let $\cP$ be a parallelogram polyomino and $\Delta(\cP)$ be the simplicial complex attached to $\cP$. Recall that, for all $j\geq 0$, every maximal chain of $\cP$ with $j$ descents as distributive lattice is a facet of $\Delta(\cP)$ with $j$ steps, and vice versa. As a consequence of \cite[Proposition 3.11, Lemma 3.12]{Parallelogram Hilbert series}, we observe that there is a one-to-one correspondence between the canonical configurations in $\cP$ of $j$ rooks and the facets of $\Delta(\cP)$ with $j$ steps.
	\end{rmk}

 \noindent  Now we are ready to prove one of the crucial results of this work, which provides a bijection between the facets with $j$ steps of the simplicial complex attached to a frame polyomino $\cP$ and the canonical configurations of $j$ rooks in $\cP$.  
	
	\begin{thm}\label{Theorem: bijection between facets and canonical positions}
	Let $\cP$ be a frame polyomino and $\Delta(\cP)$ be the simplicial complex attached to $\cP$. For all $j\geq 0$, there exists a bijection between
	the facets with $j$ steps of $\Delta(\cP)$ and the canonical configurations in $\cP$ of $j$ rooks.  
	\end{thm}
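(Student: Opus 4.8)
The plan is to build the bijection by exploiting the elementary decomposition $\cP = \cP_1 \cup \cP_2$ with $\cP_1 \cap \cP_2 = \cQ$, where $\cP_1, \cP_2$ are parallelogram polyominoes, and to reduce everything to the known parallelogram case recorded in Remark \ref{Remark: facets vs chains in parallelogram}. Recall from Discussion \ref{Discussion: facet} that any facet $F$ of $\Delta(\cP)$ decomposes into four chains $\mathfrak{c}_1, \mathfrak{c}_2, \mathfrak{c}_3, \mathfrak{c}_4$, the first and fourth living in the corner boxes $\cP_{[(1,1),(a_0,b_0)]}$ and $\cP_{[(a_k,b_k),(m,n)]}$ (these are sub-boxes of $\cQ$), the second in $\cP_1 \setminus \cQ$, the third in $\cP_2 \setminus \cQ$; moreover $\mathfrak{c}_1 * \mathfrak{c}_2 * \mathfrak{c}_4$ (concatenated appropriately through the junction vertices $f$ and $g$) is a maximal chain of the parallelogram lattice $\cP_1$, and likewise $\mathfrak{c}_1 * \mathfrak{c}_3 * \mathfrak{c}_4$ is a maximal chain of $\cP_2$. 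So a facet $F$ of $\Delta(\cP)$ is exactly the data of a maximal chain $\gamma_1$ of $\cP_1$ and a maximal chain $\gamma_2$ of $\cP_2$ that agree on $\cQ$ (i.e. restrict to the same chain in the two corner boxes). On the rook side, Proposition \ref{Prop: equivalent class for switching rooks} produces a canonical $k$-rook configuration $\cC$, and by the case analysis in its proof the rooks of $\cC$ split into those in $\cP_1$ and those in $\cP_2$, overlapping only in $\cQ$, and each group is a canonical configuration for its parallelogram; so a canonical configuration in $\cP$ is likewise the data of a canonical configuration in $\cP_1$ and one in $\cP_2$ agreeing on $\cQ$.

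The construction of the map then goes as follows. Given a facet $F$ with $k$ steps, restrict it to $\cP_1$ to get a maximal chain $\gamma_1$ with, say, $k_1$ descents, and to $\cP_2$ to get $\gamma_2$ with $k_2$ descents; apply the parallelogram bijection of Remark \ref{Remark: facets vs chains in parallelogram} to each, obtaining canonical configurations $\cC_1 \subseteq \cP_1$ and $\cC_2 \subseteq \cP_2$. The key point is that $\gamma_1$ and $\gamma_2$ agree on $\cQ$, and — because the parallelogram bijection is compatible with restriction to a sub-box — the induced configurations $\cC_1$ and $\cC_2$ agree on $\cQ$ as well, so $\cC_1 \cup \cC_2$ is a legitimate $k$-rook configuration in $\cP$ which is canonical in the sense of Proposition \ref{Prop: equivalent class for switching rooks}. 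First I would check this is well-defined and injective: distinct facets give distinct pairs $(\gamma_1, \gamma_2)$, hence distinct $(\cC_1, \cC_2)$, hence distinct $\cC_1 \cup \cC_2$. Surjectivity is the reverse assembly: a canonical configuration $\cC$ restricts to canonical configurations in $\cP_1$ and $\cP_2$ agreeing on $\cQ$, pull these back through the parallelogram bijection to maximal chains $\gamma_1, \gamma_2$ agreeing on $\cQ$, and glue to a facet $F$; one must verify $F$ is genuinely a face (no inner interval of $\cP$ has two anti-diagonal corners in $F$) and maximal, which follows from the structure in Discussion \ref{Discussion: facet} together with part (2) of Proposition \ref{Prop: Grobner basis of frame polyomino and C.M.}.

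The step-count bookkeeping needs care, and I expect it to be the main obstacle. The subtlety is that $n(F)$ is \emph{not} simply $k_1 + k_2$: by Discussion \ref{Discussion: facet} we have $\sum_{i=1}^4 n(\mathfrak{c}_i) \le n(F) \le \sum_{i=1}^4 n(\mathfrak{c}_i) + 4$, the extra steps coming from the at most four ``corner'' steps of $F$ whose lower right corner sits at a junction between two of the chains $\mathfrak{c}_i$ but which is not a descent inside any single $\mathfrak{c}_i$. Correspondingly, Lemmas \ref{Lemma: If two l.r.corners are on the Horiz. edge then...} and \ref{Lemma: If two l.r.corners are on the Vert. edge then...} control exactly when two steps share a maximal edge interval, which is what governs whether a descent of $\gamma_1$ in the box $\cP_{[(1,1),(a_0,b_0)]}$ and a descent of $\gamma_2$ in that same box get ``counted twice'' when we glue. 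The plan is to set up the accounting so that a step of $F$ lying in $\cP_1 \setminus \cQ$ corresponds bijectively to a switching-class descent of $\cC_1$ off $\cQ$, a step in $\cP_2 \setminus \cQ$ to one of $\cC_2$ off $\cQ$, and a step whose lower right corner lies in $\cQ$ is, by the two Lemmas, attached to a uniquely determined one of the two corner boxes and hence contributes to exactly one of the two parallelogram counts — so that the total number of steps of $F$ equals the total number of rooks in $\cC_1 \cup \cC_2$, i.e. $k$. Verifying this matching of step-counts on both sides of the bijection, using the two Lemmas to rule out double-counting at $\cQ$, is the technical heart of the argument; the rest is assembly.
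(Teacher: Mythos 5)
Your overall plan---decompose $\cP$ into the two parallelogram polyominoes $\cP_1,\cP_2$ with $\cP_1\cap\cP_2=\cQ$ and reduce to the parallelogram correspondence of Remark \ref{Remark: facets vs chains in parallelogram}---is indeed how the paper handles \emph{surjectivity} (via a long case analysis on where the rooks of a canonical configuration sit). But the forward map in the paper is not ``restrict and glue'': it assigns a rook directly to each step of $F$, placed in the cell whose lower right corner is the lower right corner of that step, \emph{except} when two steps have their lower right corners on a common maximal edge interval, in which case the rook coming from the corner box is displaced to the cell with lower right corner $(i,b_0)$ (resp.\ $(a_k,j)$). Your proposal has no analogue of this displacement, and that is a genuine gap rather than deferred bookkeeping.

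Concretely, two things fail. First, the structural claim underlying your construction---that a facet $F$ is exactly a pair of maximal chains of $\cP_1$ and $\cP_2$ agreeing on $\cQ$---is false: by Discussion \ref{Discussion: facet} the chain $\mathfrak{c}_1$ ends at $f=(i,j)$ while $\mathfrak{c}_2$ resumes at $(i,b_0+1)$, so when $j<b_0$ the restriction $F\cap V(\cP_1)$ skips $(i,j+1),\dots,(i,b_0)$ and is not a saturated chain of the lattice $\cP_1$; moreover the two natural saturations (vertical up to $(i,b_0)$ inside $\cP_1$, horizontal across to $(a_0,j)$ inside $\cP_2$) do \emph{not} agree on $\cQ$. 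The asserted ``compatibility of the parallelogram bijection with restriction to a sub-box'' is never justified and is exactly where this breaks. Second, and decisively, consider the situation of Lemma \ref{Lemma: If two l.r.corners are on the Horiz. edge then...}: two steps of $F$ with lower right corners $v=(i,j)\in V(\cP_{[(1,1),(a_0,b_0)]})$ and $w\notin V(\cP_{[(1,1),(a_0,b_0)]})$ on the same maximal horizontal edge interval, with $j<b_0$. The cells having $v$ and $w$ as lower right corners lie in the \emph{same} unbroken row of $\cP$ (below the hole), so placing one rook per step at its own cell---which is what ``apply the parallelogram bijection to each piece and take the union'' produces---yields two attacking rooks; and if instead $\cC_1$ and $\cC_2$ genuinely agreed on $\cQ$, then $\cC_2$ alone would contain both rooks and could not be non-attacking in $\cP_2$. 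Either way your map does not land in the set of $k$-rook configurations. The displacement of the corner-box rook to row $b_0$ (resp.\ column $a_k$), governed by Lemmas \ref{Lemma: If two l.r.corners are on the Horiz. edge then...} and \ref{Lemma: If two l.r.corners are on the Vert. edge then...}, is the actual content of the bijection, and it---together with the step-count identity you correctly flag via $\sum_i n(\mathfrak{c}_i)\le n(F)\le\sum_i n(\mathfrak{c}_i)+4$, and the injectivity/surjectivity verifications---is precisely what the proposal leaves unproved.
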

	
	\begin{proof}
	The first part of the proof is devoted to defining a desired bijective function that uniquely assigns to a facet of $\Delta(\cP)$ a canonical configuration in $\cP$. Let $F$ be a facet of $\Delta(\cP)$ with $j$ steps. 
    \begin{itemize}
        \item  If $j=0$, then $F$ has no step. Observe from Discussion \ref{Discussion: facet} that $F=F_I\cup S^*$, where $F_I\cup S^*$ is the facet defined in the proof of Proposition \ref{Prop: Grobner basis of frame polyomino and C.M.}. We associate to $F$ the $0$-rook configuration, which is the empty set. 
        \item If $j=1$, then the facet $F$ has just one step, whose lower right corner is denoted by $v_F$. In such a case, we attach to $F$ the $1$-rook configuration defined by placing a rook in the cell of $\cP$ having $v_F$ as the lower right corner. 
        \item Assume that $j \geq 2$. Consider a step $F'$ of $F$ having $w$ as the lower right corner and we denote by $H_w$ and $V_w$ respectively the maximal horizontal and vertical edge intervals of $\cP$ containing $w$. We look at the following two possibilities.
 \begin{enumerate}
     \item There does not exist any step with a lower right corner belonging to $H_w$ and to $V_w$. In such a case, we place a rook in the cell of $\cP$ having $w$ as the lower right corner. 
    \item There exists a step with a lower right corner $v=(i,h)$ belonging to $H_w$ or to $V_w$. 
    \begin{itemize}
        \item[a)] Assume that $v\in H_w$. It is not restrictive to suppose that $v<w$ because the other case will follow similarly. From Lemma \ref{Lemma: If two l.r.corners are on the Horiz. edge then...}, we have that $v\in V(\cP_{[(1,1),(a_0,b_0)]})$ and $w\notin V(\cP_{[(1,1),(a_0,b_0)]})$. Therefore we assign a rook to the cell of $\cP$ having $w$ as the lower right corner and another one to the cell having $(i,b_0)$ as a lower right corner.
        \item[b)] Assume that $v\in V_w$. We may take $w<v$ because the other case is similar. From Lemma \ref{Lemma: If two l.r.corners are on the Vert. edge then...}, we get $v\in V(\cP_{[(a_k,b_k),(m,n)]})$ and $w\notin V(\cP_{[(a_k,b_k),(m,n)]})$. Therefore we attach a rook to the cell of $\cP$ having $w$ as the lower right corner and another one to the cell having $(a_k,h)$ as a lower right corner. 
    \end{itemize}
 \end{enumerate}
    
    \end{itemize}
In this way, we define a configuration $\cR$ of $j$ rooks in $\cP$, related to the facet $F$ of $\Delta(\cP)$ with $j$ steps. We need to show that $\cR$ is a canonical configuration in $\cP$. Firstly, we prove that every pair of rooks of $\cR$ is in non-attacking position. Let $R_1, R_2\in \cR$ and $F_1=\{a,v_1,b\}, F_2=\{c,v_2,d\}$ be the two steps related respectively to $R_1$ and $R_2$ having $v_1=(i_1,j_1)$ and $v_2=(i_2,j_2)$ as lower right corners, with $v_1<v_2$. The only case that we need to examine is when $v_1$ and $v_2$ are on the same maximal edge interval of $\cP$. We may assume that $j_1=j_2$ because the case $i_1=i_2$ can be shown similarly. From Lemma \ref{Lemma: If two l.r.corners are on the Horiz. edge then...}, we have that $v_1\in V(\cP_{[(1,1),(a_0,b_0)]})$ and $v_2\notin V(\cP_{[(1,1),(a_0,b_0)]})$. Hence, $R_1$ is in the cell of $\cP$ having $(i_1,b_0)$ as lower right corner and $R_2$ is in the cell of $\cP$ with $v_2$ as lower right corner, so $R_1$ and $R_2$ are not in attacking position. Moreover, we cannot find any rook in the vertical and horizontal blocks of $\cP$ containing $R_1$ or $R_2$. We show just that there is no rook in the vertical and horizontal blocks of $\cP$ containing $R_1$, because the other case for $R_2$ can be shown similarly. By contradiction, if there is a rook different from $R_1$ in the vertical block of $\cP$ containing $R_1$, then there exists a step $G=\{p,q,r\}$, where $q$ is its lower right corner with $q=(i_1,h_1)$ and $1\leq h_1\leq n$. But in such a case, we have that either $p,v_1$ or $a,q$ are the anti-diagonal corners of an inner interval of $\cP$, a contradiction. In a similar way, we show that there is no rook in the horizontal block of $\cP$ containing $R_1$.  Hence all pairs of rooks in $\cR$ are in non-attacking position. Moreover, we observe that two switching rooks of $\cR$ cannot be in anti-diagonal position; in fact, by contradiction, if two switching rooks of $\cR$ are in anti-diagonal position then the lower right corners of the related steps are the anti-diagonal corners of an inner interval of $\cP$, which is a contradiction with (2) of Proposition \ref{Prop: Grobner basis of frame polyomino and C.M.}. We conclude that $\cR$ is a canonical configuration.\\
Set $\cR=\cC_F$ and denote by $\cF_{\cP,j}$ the set of the facets of $\Delta(\cP)$ with $j$ steps and by $\mathfrak{C}_{\cP,j}$ the set of all canonical configurations in $\cP$ of $j$ rooks. We introduce the map $\psi: \cF_{\cP,j}\rightarrow\mathfrak{C}_{\cP,j}$ where $\psi(F)$ is the canonical configuration $\cC_F$ defined before, for all $F\in \cF_{\cP,j}$.
	We prove that $\psi$ is bijective.\\
 Firstly, we show that $\psi$ is injective. Let $F_1, F_2\in \cF_{\cP,j}$ such that $F_1\neq F_2$.  We prove that $\cC_{F_1}\neq \cC_{F_2}$. Since $F_1\neq F_2$, there exists $a\in V(\cP)$ such that $a\in F_1$ and $a\notin F_2$. Set $a=(a_x,a_y)$. Firstly, assume that $a\in V(\cP_{[(1,1),(a_0,b_0)]})\setminus [(1,b_0),(a_0,b_0)]$. We distinguish two cases. 

 \begin{itemize}
 
    \item[Case 1] Suppose that $a$ is the lower right corner of a step $\{p,a,q\}$ of $F_1$. Set $p=(p_x,a_y)$ and $q=(a_x,q_y)$. It follows from Discussion \ref{Discussion: facet} that $p_x=a_x-1$ and $q_y$ is either $a_y+1$ or $b_0+1$. We examine the following two sub-cases.
    
    \begin{itemize}
    \item[(1)] If $q_y=a_y+1$, then a rook $R_1$ of $\cC_{F_1}$ is in the cell $C$ of $\cP$ with $a$ as lower right corner. Observe from the definition of $\psi$ that the only possibility in order to $R_1\in \cC_{F_2}$ is that  
    $\{p,a,q\}$ is a step of $F_2$. Since $a\notin F_2$, $\{p,a,q\}$ is not a step of $F_2$, so $R_1\notin \cC_{F_2}$, that is $\cC_{F_1}\neq \cC_{F_2}$.
    \item[(2)] If $q_y=b_0+1$, then from Discussion \ref{Discussion: facet} there exists a step of $F_1$ with lower right corner $a'$ such that $a$ and $a'$ are on the same maximal horizontal edge interval of $\cP$ and $a'\notin V(\cP_{[(1,1),(a_0,b_0)]})$. Hence a rook $R_1$ of $\cC_{F_1}$ is in the cell of $\cP$ having $(a_x,b_0)$ as lower right corner and another rook $R_2$ of $\cC_{F_1}$ is in the cell of $\cP$ having $a'$ as lower right corner. We show that $R_1$ or $R_2$ do not belong to $\cC_{F_2}$. If $R_1\notin \cC_{F_2}$, then we have finished. Suppose that $R_1\in \cC_{F_2}$ and we prove that $R_2\notin\cC_{F_2}$. Since $a\notin F_2$ and $R_1\in \cC_{F_2}$, then from Discussion \ref{Discussion: facet} the only possibility is that $\{(a_x-1,b_0),(a_x,b_0),(a_x,b_0+1)\}$ is a step of $F_2$. As a consequence, the vertices in $\{(i,j)\in V(\cP):i\geq a_x,j<b_0\}$ do not belong to $F_2$, so no rook of $\cC_{F_2}$ is placed in a cell of the cell interval $\cP_{[(a_0,1),(m,b_0)]}$ and $R_2\notin \cC_{F_2}$. Therefore, $\cC_{F_1}\neq \cC_{F_2}$. 
    \end{itemize}
    
    \item[Case 2] Suppose that $a$ is not the lower right corner of a step of $F_1$. Recall that $a\neq (1,1)$ since $a\notin F_2$. We examine the following sub-cases.
    
    \begin{itemize}
        \item[(1)] If $1<a_x\leq a_0$ and $a_y=1$, then $[(1,1),(a_x,1)]\subset F_1$ as explained in Discussion \ref{Discussion: facet}. We show firstly that $(a_x+1,1)$ belongs to $F_1$. Suppose by contradiction that $(a_x+1,1)\notin F_1$. From Discussion \ref{Discussion: facet} it follows that $(a_x,2)\in F_1$ or $(a_x,b_0+1)\in F_1$ and no vertex in $[(a_x,2),(a_x,b_0)]$ is in $F_1$. In both cases $(a_x,1)$ is a lower right corner of a step of $F_1$, which is a contradiction. Hence $(a_x+1,1)\in F_1$. Let $k_1$ be the maximum integer in $\{a_x+1,\dots,a_0\}$ such that $(k_1,1)\in F_1$. So, we have only two possibilities: either $(k_1,2)\in F_1$ or $(k_1,2)\notin F_1$. If $(k_1,2)\in F_1$ then $\{(k_1-1,1),(k_1,1),(k_1,2)\}$ is a step of $F_1$ so a rook $Q_1$ of $\cC_{F_1}$ is in the cell of $\cP$ having $(k_1,1)$ as lower right corner. If $(k_1,2)\notin F_1$, then it easily follows from Discussion \ref{Discussion: facet} that $\{(k_1-1,1),(k_1,1),(k_1,b_0+1)\}$ is a step of $F_1$ and there exists another step of $F_1$ whose lower right corner $v$ belongs to $[(a_0+1,1),(m,1)]$. Hence, from the definition of $\psi$, there is a rook $T_1$ of $\cC_{F_1}$ in the cell of $\cP$ with $(k_1,b_0)$ as lower right corner and another rook $T_2$ of $\cC_{F_1}$ in the cell of $\cP$ with $v$ as lower right corner. \\
        Now, we consider $F_2$. Let $N=\{(i,j)\in V(\cP): i<a_x,j>1\}$. We prove that there exists a vertex $w$ in $N$ belonging to $F_2$. In fact, by contradiction, if there does not exist any vertex in $N$ belonging to $F_2$, then there is no inner interval of $\cP$ having as anti-diagonal corners $a$ and a vertex in $N$, hence $a$ belongs to $F_2$ due to the maximality of $F_2$. But this is a contradiction because $a\notin F_2$. Therefore, let $w=(w_x,w_y)\in N\cap F_2$. If $w\in [(1,2),(a_x-1,b_0)]$, then no vertex in $[(w_x+1,1),(m,w_y-1)]$ is in $F_2$ so no rook of $\cC_{F_2}$ is in a cell of $\cP_{[(w_x,1),(m,w_y)]}$. Hence neither $Q_1$ or $T_2$ belong to $\cC_{F_2}$, that is $\cC_{F_1}\neq \cC_{F_2}$. If $w\in N\setminus [(1,2),(a_x-1,b_0)]$, then any vertex in $[(w_x+1,1),(a_0,b_0)]$ is in $F_2$ so no rook of $\cC_{F_2}$ is in a cell of $\cP_{[(w_x+1,1),(a_0,b_0+1)}$ and neither $Q_1$ or $T_1$ belong to $\cC_{F_2}$, so $\cC_{F_1}\neq \cC_{F_2}.$
        \item[(2)] If $a_x=1$ and $1<a_y< b_0$, then we use similar arguments as done in the previous sub-case (1) to prove that no rook of $\cC_{F_1}$ is in a cell of $\cP_{[(1,1),(m,a_y)]}$ and at least one rook of $\cC_{F_2}$ is in a cell of a sub-polyomino of $\cP_{[(1,1),(m,a_y)]}$.
        \item[(3)] If $1<a_x\leq a_0$ and $1<a_y< b_0$, then we can argue as done in (1) for $F_1$. For the discussion about $F_2$, we may consider $N_1=\{(i,j)\in V(\cP): i<a_x,j>a_y\}$ and $N_2=\{(i,j)\in V(\cP): i>a_x,j<a_y\}$. In particular, if $N_1\cap F_2\neq \emptyset$, so we get the claim arguing similarly as done in the sub-case (1). If $N_1\cap F_2= \emptyset$ and $N_2\cap F_2\neq \emptyset$, then we can argue as in sub-case (2). In both we get the desired claim.  
    \end{itemize}
 \end{itemize}
 \noindent The two examined cases lead to $\cC_{F_1}\neq \cC_{F_2}.$ Moreover, all the other situations when $a\notin V(\cP_{[(1,1),(a_0,b_0)]})\setminus[(1,b_0),(a_0,b_0)]$ give us $\cC_{F_1}\neq \cC_{F_2}$, using an approach similar to the previous one. Hence we conclude that $\cC_{F_1}\neq \cC_{F_2}$, so $\psi$ is injective.\\
\noindent We prove that $\psi$ is surjective. Let $\cT$ be a canonical configuration of $j$ rooks and we prove that there exists a facet $F$ of $\Delta(\cP)$ of $j$ steps such that $\psi(F)=\cT$. If $j=0$, then we set $F=F_I\cup S^*$, where $F_I\cup S^*$ is the facet defined in the proof of Proposition \ref{Prop: Grobner basis of frame polyomino and C.M.}, so $F$ has no step and $\psi(F)=\cT$. \\
Suppose that $j\geq 1$. Recall that the parallelogram polyomino $\cS$, which is the hole of $\cP$, is determined by the north-east paths $S_1$ and $S_2$ with endpoints $(a_0,b_0)$ and $(a_k,b_k)$. Referring to Figure \ref{Figure: proof for equivalance rook}, we consider several cases. 

 \begin{itemize}
     \item[Case 1] Assume that all rooks of $\cT$ are in $\cP_1$. From Remark \ref{Remark: facets vs chains in parallelogram}, there exists a facet $F_1$ of $\Delta(\cP_1)$ of $j$ steps corresponding to $\cT$. Set $F=F_1\cup (S_2\setminus\{(a_0,b_0),(a_k,b_k)\})$. It is easy to see that $F$ is a facet with $j$ steps of $\Delta(\cP)$ and, in particular, $\psi(F)=\cT$, that is the claim. 
     \item[Case 2] Suppose that all rooks of $\cT$ are in $\cP_2$. Applying Remark \ref{Remark: facets vs chains in parallelogram}, there exists a facet $F_2$ of $\Delta(\cP_2)$ of $j$ steps corresponding to $\cT$. Now we examine four sub-cases depending on the placement of the rooks in $\cQ$. 
     \begin{enumerate}
         \item Assume that there is no rook in $\cQ$. Since $j\geq 1$, then at least one rook is in $\cP_2\setminus \cQ$. We denote by $(t_x, t_y)$ and $(r_x, r_y)$ respectively the lower right corner of the cells in $\cP_2$ where the most left and the most right rooks of $\cT$ are placed. We set $L_1=[(1,b_0+1),(1,n)]\cup [(1,n),(a_k-1,n)]$ and we distinguish the following sub-cases. 
         \begin{enumerate}
             \item If $t_y\leq b_0$ and $r_x>a_k$, then $F=\big( F_2\setminus ([(2,t_y),(a_0,t_y)]\cup[(r_x,b_k),(r_x,n-1)])\big)\cup L_1$ (see Figure \ref{Figure: Sur zero, configuration facet});

             \begin{figure}[h!]
		\centering
		\includegraphics[scale=1]{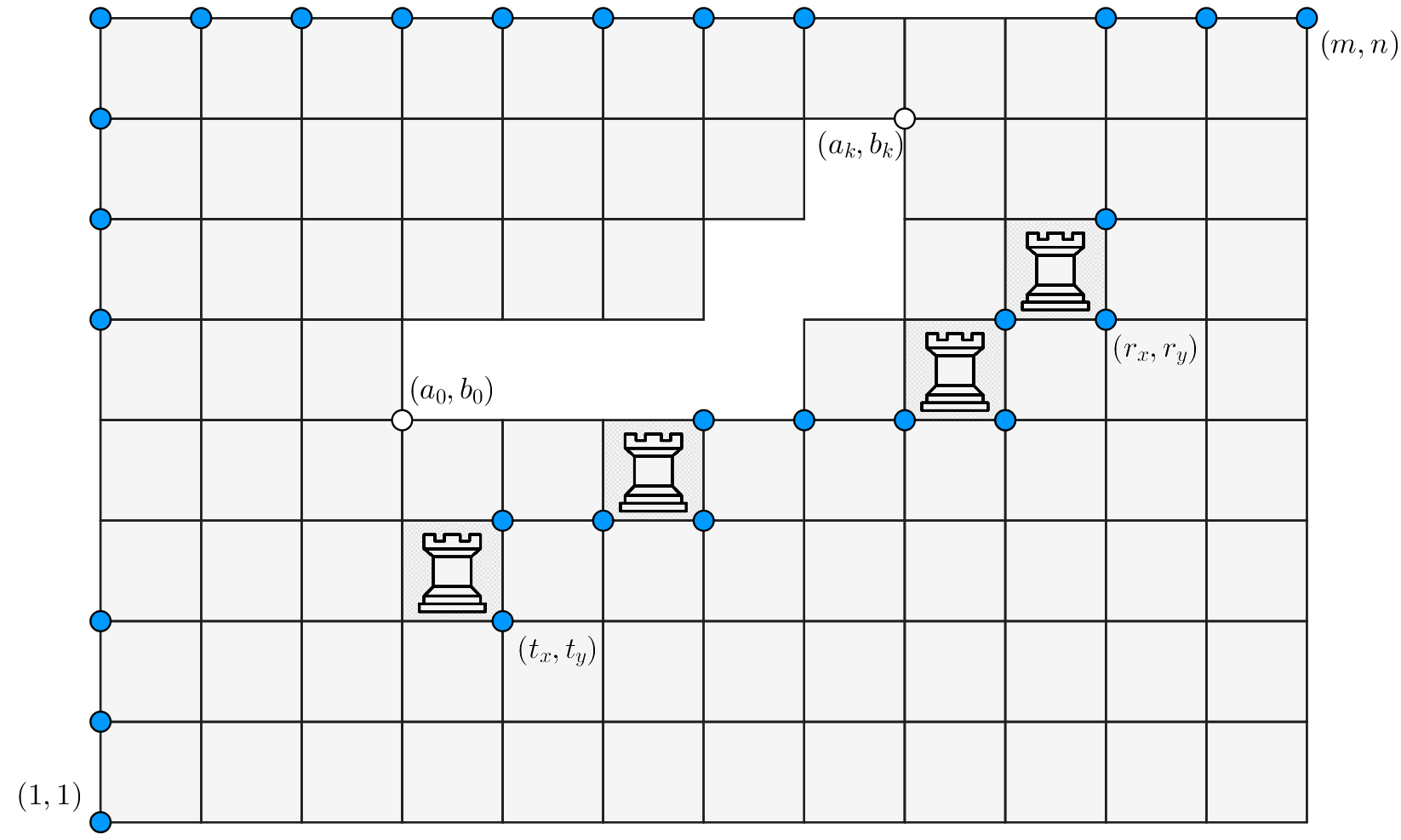}
		\caption{A canonical configuration and the related facet.}
		\label{Figure: Sur zero, configuration facet}
	\end{figure}
 
             \item If $t_y\leq b_0$ and $r_x\leq a_k$, then $F=\big( F_2\setminus ([(2,t_y),(a_0,t_y)]\cup[(a_k,b_k),(a_k,n-1)])\big)\cup L_1$;
             
             \item If $t_y> b_0$ and $r_x\leq a_k$, then $F=\big( F_2\setminus ([(2,b_0),(a_0,b_0)]\cup[(a_k,b_k),(a_k,n-1)])\big)\cup L_1$;
             
             \item If $t_y> b_0$ and $r_x> a_k$, then $F=\big( F_2\setminus ([(2,b_0),(a_0,b_0)]\cup[(r_x,b_k),(r_x,n-1)])\big)\cup L_1$.
         \end{enumerate}
         
         It is easy to see in every previous sub-case that $F$ is a facet of $j$ steps of $\Delta(\cP)$ and $\psi(F)=\cT$, which is the desired claim.
         \item Suppose that at least one rook is in $\cP_{[(1,1),(a_0,b_0)]}$ and no one in $\cP_{[(a_k,b_k),(m,n)]}$. We denote by $(a',b')$ the lower right corner of the cell of $\cP$ where it is placed the most right rook of $\cT$ in $\cP_{[(1,1),(a_0,b_0)]}$. Set $L_2=[(a',b_0+1),(a',n)]\cup [(a',n),(a_k-1,n)]$.
         \begin{itemize}
             \item If no rook is in $\cP_2\setminus \cQ$, then we define $F=\big( F_2\setminus ([(a'+1,b_0),(a_0,b_0)]\cup[(a_k,b_k),(a_k,n-1)])\big)\cup L_2$.  
             
             \item If a rook is in $\cP_2\setminus \cQ$, we indicate by $(t_x, t_y)$ and $(r_x, r_y)$ respectively the lower right corner of the cells in $\cP_2$ where the most left and the most right rooks of $\cT$ are placed. We have the following four sub-cases.
         \begin{enumerate}
             \item If $t_y\leq b_0$ and $r_x>a_k$, then $F=(F_2\setminus [(a'+1,t_y),(a_0,t_y)]\cup[(r_x,b_k),(r_x,n-1)])\cup L_2$ (see Figure \ref{Figure: Sur one, configuration facet}).
             
        \begin{figure}[h!]
		\centering
		\includegraphics[scale=1]{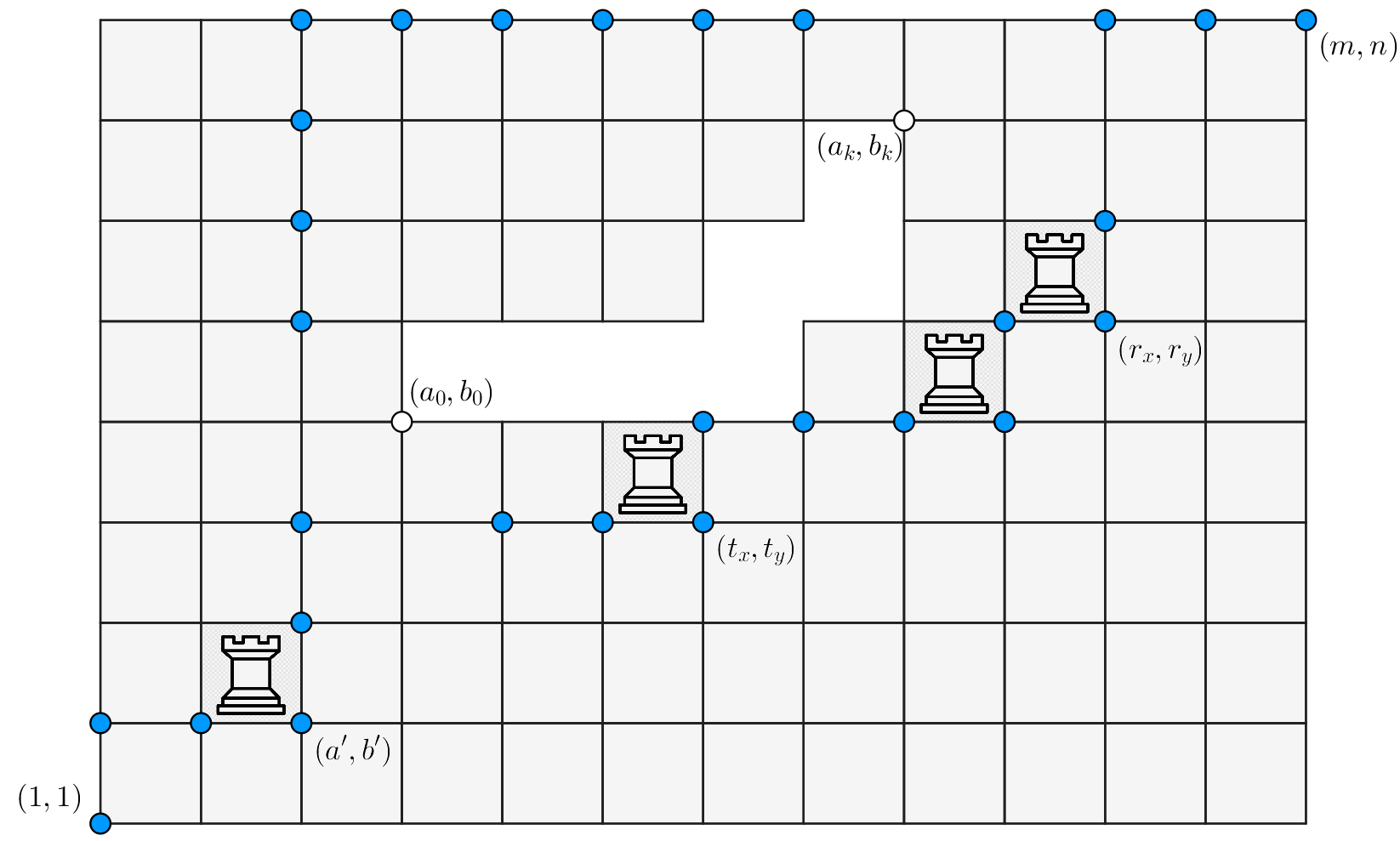}
		\caption{A canonical configuration and the related facet.}
		\label{Figure: Sur one, configuration facet}
	\end{figure}

             \item If $t_y\leq b_0$ and $r_x\leq a_k$, then $F=(F_2\setminus [(a'+1,t_y),(a_0,t_y)]\cup[(a_k,b_k),(a_k,n-1)])\cup L_2$;
             
             \item If $t_y> b_0$ and $r_x\leq a_k$, then $F=(F_2\setminus [(a'+1,b_0),(a_0,b_0)]\cup[(a_k,b_k),(a_k,n-1)])\cup L_2$;
             
             \item If $t_y> b_0$ and $r_x> a_k$, then $F=(F_2\setminus [(a'+1,b_0),(a_0,b_0)]\cup[(r_x,b_k),(r_x,n-1)])\cup L_2$.
         \end{enumerate}

          In every case $F$ is a facet of $j$ steps of $\Delta(\cP)$ and $\psi(F)=\cT$.
         \end{itemize}
         
         \item The case when no rook is placed in $\cP_{[(1,1),(a_0,b_0)]}$ and at least one in $\cP_{[(a_k,b_k),(m,n)]}$ can be proved similarly, as well as when a rook is both in $\cP_{[(1,1),(a_0,b_0)]}$ and in $\cP_{[(a_k,b_k),(m,n)]}$.
         
     \end{enumerate}
     
     \item[Case 3] Now, we can assume that one rook of $\cT$ is in $\cP_1$ and another one in $\cP_2$. Consider the following two cell intervals: $E_{\mathrm{h}}$ is the set of the cells of $\cP$ with lower right corner $(i,b_0)$ for all $i=2,\dots, a_0$ and $E_{\mathrm{v}}$ is that one of the cells of $\cP$ with lower right corner $(a_k,l)$ for all $l=b_k,\dots, n-1$. Here we need to distinguish some cases depending on the placement of the rooks in $\cQ$, $E_{\mathrm{h}}$, or $E_{\mathrm{v}}$. 
     \begin{enumerate}
         \item Suppose firstly that no rook of $\cT$ is in $\cQ\cup E_{\mathrm{h}} \cup E_{\mathrm{v}}$. 
         Consider the parallelogram sub-polyomino $\cK_1=\cP_1\setminus (\cQ\cup E_{\mathrm{h}} \cup E_{\mathrm{v}})$ of $\cP_1$. 
         Note that all rooks of $\cT$ are placed in $\cK_1$ and $\cP_2\setminus \cQ$. From Remark \ref{Remark: facets vs chains in parallelogram} it follows that there exist two facets $K_1$ and $K_2$ respectively of $\Delta(\cK_1)$ and $\Delta(\cP_2)$ corresponding to the canonical configurations in $\cK_1$ and $\cP_2$. Denote by $(t_x,t_y)$ and $(r_x,r_y)$ respectively the lower right corner of the cells in $\cP_2$ where the most left and the most right rooks of $\cT$ are placed. We define $F$ as follows.
         \begin{enumerate}
             \item If $t_y\leq b_0$ and $r_x>a_k$, then $F=K_1\cup (K_2\setminus ([(2,t_y),(a_0,t_y)]\cup [(r_x,b_k),(r_x,n-1)])$ (see Figure \ref{Figure: Sur two, configuration facet});
             
        \begin{figure}[h!]
		\centering
		\includegraphics[scale=1]{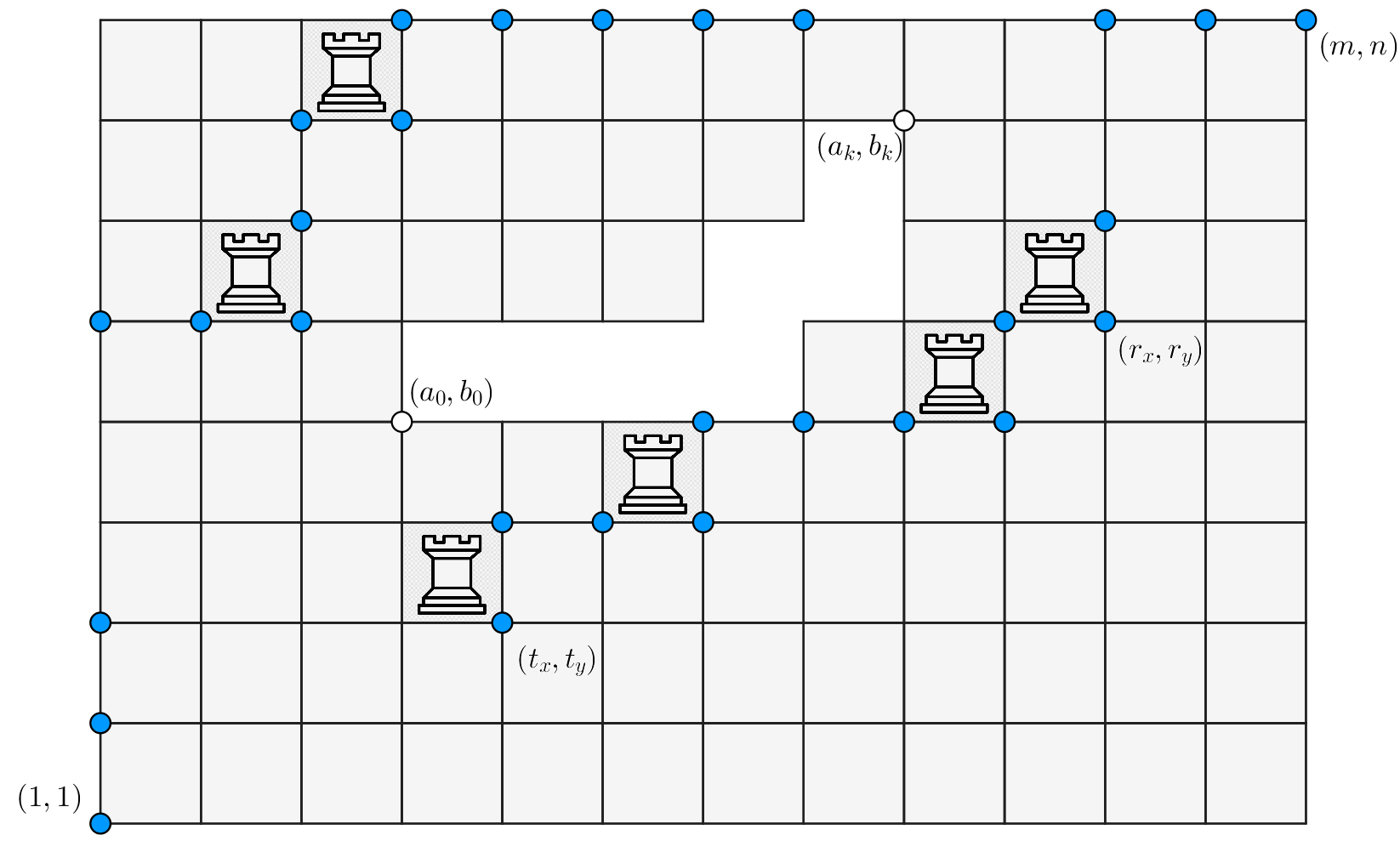}
		\caption{A canonical configuration and the related facet.}
		\label{Figure: Sur two, configuration facet}
	\end{figure}
 
             \item If $t_y\leq b_0$ and $r_x\leq a_k$, then $F=K_1\cup (K_2\setminus ([(2,t_y),(a_0,t_y)]\cup [(a_k,b_k),(a_k,n-1)])$;
             \item If $t_y> b_0$ and $r_x\leq a_k$, then $F=K_1\cup (K_2\setminus ([(2,b_0),(a_0,b_0)]\cup [(a_k,b_k),(a_k,n-1)])$;
             \item If $t_y> b_0$ and $r_x> a_k$, then $F=K_1\cup (K_2\setminus ([(2,b_0),(a_0,b_0)]\cup [(r_x,b_k),(r_x,n-1)])$.
         \end{enumerate}
         It is easy to see that, in every case, $F$ is a facet of $j$ steps of $\Delta(\cP)$ and, moreover, that $\psi(F)=\cT$, which is the claim.
         
         \item Assume now that no rook of $\cT$ is placed in $E_{\mathrm{h}}\cup E_{\mathrm{v}}$ but there exists at least one in $\cQ$. Moreover, we may suppose that a rook is in $\cP_{[(1,1),(a_0,b_0)]}$ and another one in $\cP_{[(a_k,b_k),(m,n)]}$, because the other cases can be proved similarly. Observe that if there are no rooks in $\cP_1\setminus\cQ$ (resp. $\cP_2\setminus\cQ$) then we are in Case 2 (resp. Case 1) so the proof is done. Assume that at least a rook is both in $\cP_1\setminus\cQ$ and in $\cP_2\setminus\cQ$. Let $(t_x,t_y)$ and $(r_x,r_y)$ be respectively the lower right corner of the cells in $\cP_2$ where the most left and the most right rooks of $\cT$ are placed. Denote by $(c_x,c_y)$ the lower right corner of the cell in $\cP_{[(1,1),(a_0,b_0)]}$ where the most right rook of $\cT$ is placed, and by $(d_x,d_y)$ the lower right corner of the cell in $\cP_{[(a_k,b_k),(m,n)]}$ where the most left rook of $\cT$ is placed. Observe that $c_y< t_y$ and $r_x<d_x$. In fact, $c_y\leq t_y$ and $r_x\leq d_x$ from Discussion \ref{Discussion: facet} and, moreover, $c_y\neq t_y$ and $r_x\neq d_x$ since no rook of $\cT$ is placed in $E_{\mathrm{h}}\cup E_{\mathrm{v}}$. Now, we examine the following sub-cases.
         \begin{enumerate}
             \item If $t_y<b_0$ and $r_x>a_k$, then consider $\cP_1$ and the parallelogram polyomino $\cK_2$ given by the north-east paths $[(a_0,t_y),(r_x,t_y)]\cup [(r_x,t_y),(r_x,b_k)]$ and $[(a_0,t_y),(a_0,b_0)]\cup (S_2\setminus\{(a_0,b_0),(a_k,b_k)\}\cup [(a_k,b_k),(r_x,b_k)]$. From Remark \ref{Remark: facets vs chains in parallelogram} there exist two facets $K_1$ and $K_2$ respectively of $\Delta(\cP_1)$ and $\Delta(\cK_2)$ corresponding to the canonical configurations in $\cP_1$ and $\cK_2$. Then we set $F=\big(K_1\setminus([(c_x,t_y+1),(c_x,b_0)]\cup [(a_k,d_y),(r_x-1,d_y)])\big)\cup \big(K_2\setminus\{(a_0,t_y),(r_x,b_k)\}\big)$ (see Figure \ref{Figure: Sur tre, configuration facet}). By construction, it is easy to see that $F$ is a facet of $j$ steps of $\Delta(\cP)$ and, moreover, that $\psi(F)=\cT$.

              \begin{figure}[h!]
		\centering
		\includegraphics[scale=1]{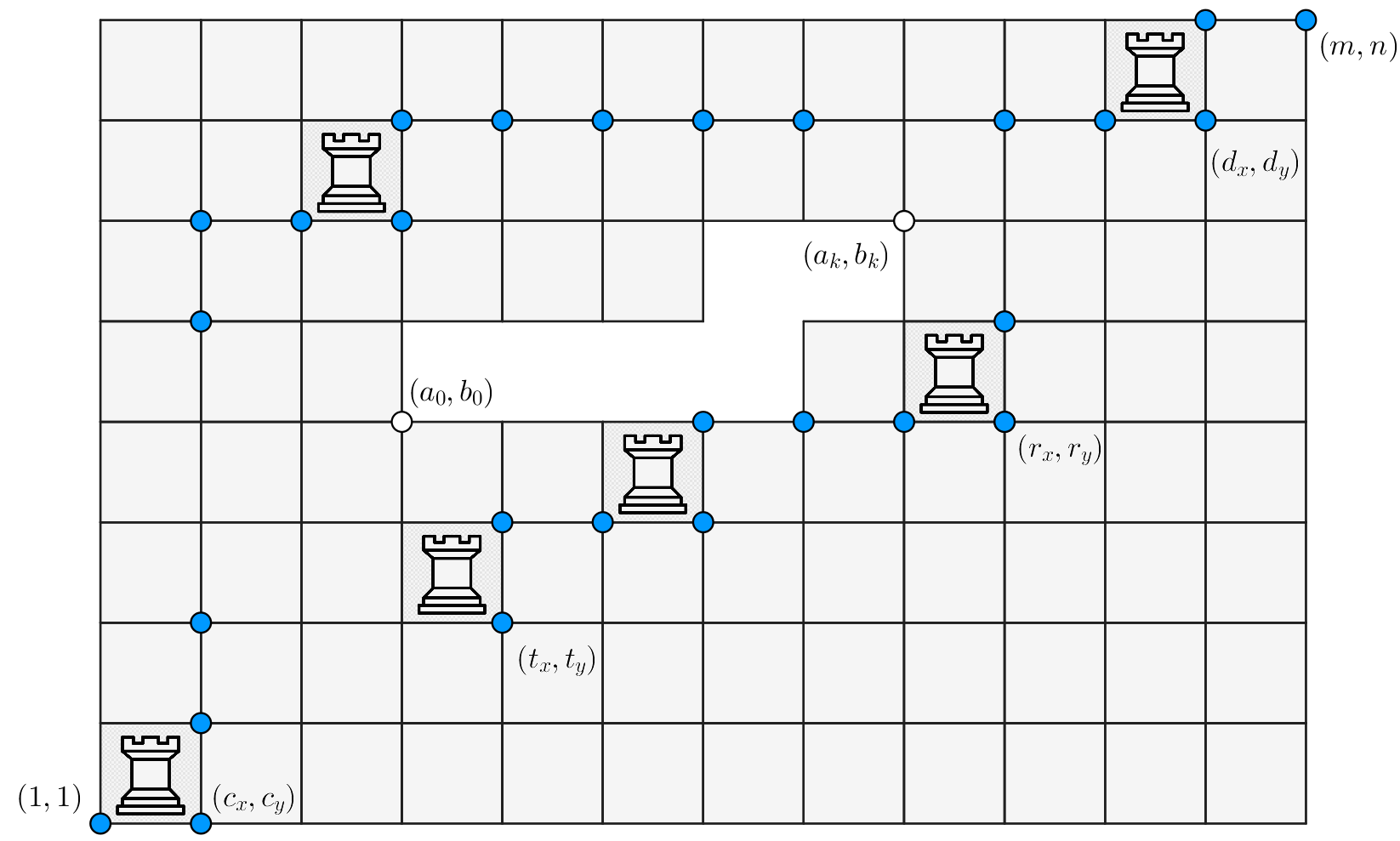}
		\caption{A canonical configuration and the related facet.}
		\label{Figure: Sur tre, configuration facet}
	\end{figure}
 
             \item If $t_y\geq b_0$ and $r_x\leq a_k$, then we need to consider $\cP_1$ and the parallelogram polyomino $\cK_2$ given by the north-east paths $[(a_0+1,b_0-1),(a_k+1,b_0-1)]\cup [(a_k+1,b_0-1),(a_k+1,b_k-1)]$ and $[(a_0+1,b_0-1),(a_0+1,b_0)]\cup (S_2\setminus\{(a_0,b_0),(a_k,b_k)\}\cup [(a_k,b_k-1),(a_k+1,b_k-1)]$. As before, there exist two facets $K_1$ and $K_2$ respectively of $\Delta(\cP_1)$ and $\Delta(\cK_2)$ corresponding to the canonical configurations in $\cP_1$ and $\cK_2$, and in such a case the desired facet is $F=K_1\cup K_2\setminus(\{(a_0+1,b_0-1),(a_k+1,b_k-1)\})$.
             \item If $t_y<b_0$ and $r_x\leq a_k$, then it is sufficient to consider $\cP_1$ and the parallelogram polyomino $\cK_2$ given by the north-east paths $[(a_0,t_y),(a_k+1,t_y)]\cup [(a_k+1,t_y),(a_k+1,b_k-1)]$ and $[(a_0,t_y),(a_0,b_0)]\cup (S_2\setminus\{(a_0,b_0),(a_k,b_k)\}\cup [(a_k,b_k-1),(a_k+1,b_k-1)]$. As before, $K_1$ and $K_2$ are the two facets respectively of $\Delta(\cP_1)$ and $\Delta(\cK_2)$ corresponding to the canonical configurations in $\cP_1$ and $\cK_2$ and  $F=\big( K_1\setminus[(c_x,t_y+1),(c_x,b_0)]\big) \cup \big(K_2\setminus(\{(a_0,t_y),(a_k+1,b_k-1)\})\big)$.
             \item If $t_y\geq b_0$ and $r_x>a_k$, it is clear how the facet $F$ can be defined similarly. 
         \end{enumerate}        
     
         \item Suppose now that there exists a rook in $E_{\mathrm{h}} \cup E_{\mathrm{v}}$. It is not restrictive to assume that there is a rook both in $E_{\mathrm{h}}$ and in $E_{\mathrm{v}}$, one in $\cP_{[(1,1),(a_0,b_0)]}$ and another one in $\cP_{[(a_k,b_k),(m,n)]}$. Moreover, if there is not any rook in $\cP_2\setminus \cQ$ then we are in Case 1, so we can assume that a rook is also in $\cP_2\setminus \cQ$. We denote by $(f_x,b_0)$ and $(a_k,g_y)$ respectively the lower right corner of the rook in $E_{\mathrm{h}}$ and $E_{\mathrm{v}}$, and we set $(t_x,t_y)$, $(r_x,r_y)$, $(c_x,c_y)$ and $(d_x,d_y)$ as before. Here we examine just the case when $t_x<b_0$ and $r_x>a_k$ because the other ones can be proved using the same approach and some considerations as done in (2) of Case 3. Consider now the following four parallelogram polyominoes: $\cB_1$ and $\cB_2$ are the rectangular polyominoes given respectively by $[(1,1),(f_x,t_y)]$ and $[(r_x,d_y),(m,n)]$, $\cG_1$ is the parallelogram sub-polyomino of $\cP_1$ determined by the north-east paths $[(f_x-1,b_0),(f_x-1,g_y+1)]\cup [(f_x-1,g_y+1),(a_k,g_y+1)]$ and $[(f_x-1,b_0),(a_0,b_0)]\cup S_1\cup [(a_k,b_k),(a_k,g_y+1)]$ and $\cG_2$ is the parallelogram sub-polyomino of $\cP_2$ determined by $[(a_0,t_y),(a_0,b_0)]\cup S_2\cup [(a_k,b_k),(r_x,b_k)]$ and $[(a_0,t_y),(r_x,t_y)]\cup [(r_x,t_y),(r_x,b_k)]$. From Remark \ref{Remark: facets vs chains in parallelogram}, there exist four facets $F_1$, $F_2$, $F_3$ and $F_4$ of respectively $\Delta(\cB_1)$, $\Delta(\cB_2)$, $\Delta(\cG_1)$ and $\Delta(\cG_2)$  corresponding to the canonical configurations in that four parallelogram polyominoes. Consider $F=F_1\cup F_2\cup (F_3\setminus \{(f_x-1,b_0),(f_x,b_0),(a_k,g_y),(a_k,g_y+1)\}) \cup (F_4\setminus \{(a_0,t_y),(a_k,b_k)\})$. Look at Figure \ref{Figure: arrangement vs facet} for an example of such a case. We want to point out that when we remove $(f_x-1,b_0),(f_x,b_0)$ from $F_3$, we are removing the step in $F_3$ corresponding to the rook in $E_{\mathrm{h}}$, but it is substituted by $\{(f_x-1,t_y),(f_x,t_y),(f_x,b_0+1)\}$ as step of $F$. The same holds for the vertices $(a_k,g_y),(a_k,g_y+1)$. In light of this, it is easy to see that $F$ is a facet of $\Delta(\cP)$ with $j$ steps and $\psi(F)=\cT$.    
     \end{enumerate}
 \end{itemize}
  Hence $\psi$ is surjective. In conclusion, $\psi$ is bijective.	
	\end{proof}

	   \noindent In Figure \ref{Figure: arrangement vs facet} we figure out a canonical configuration $\cC$ in $\cP$ of seven rooks and the facet of $\Delta(\cP)$ attached to $\cC$.\\
	
	\begin{figure}[h!]
		\centering
		\includegraphics[scale=1]{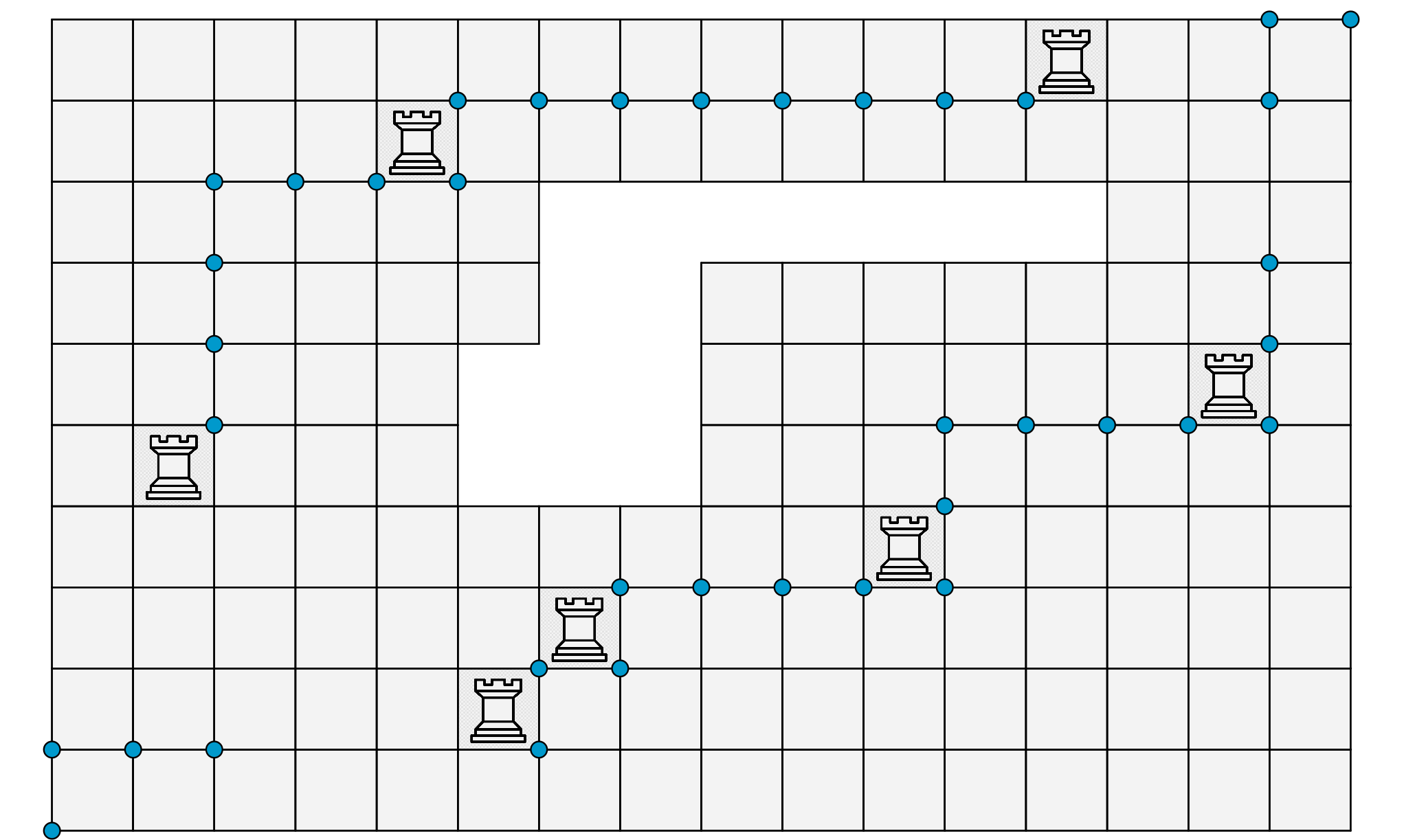}
		\caption{A canonical configuration of seven rooks and the related facet.}
		\label{Figure: arrangement vs facet}
	\end{figure}	 

 \noindent Finally we are ready to prove the main result of this work. 
 
	\begin{thm}\label{mainthm}
	Let $\cP$ be a frame polyomino. The $h$-polynomial of $K[\cP]$ is the switching rook polynomial of $\cP$.
	\end{thm}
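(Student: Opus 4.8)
The plan is to derive the result by chaining together the three structural theorems proved above, so that the bulk of the work has already been done. First, by part~(2) of Theorem~\ref{Proposition: The facet less the lower right corner generates the intersection}, the $k$-th coefficient $h_k$ of the $h$-polynomial $h(t)$ of $K[\cP]$ equals the number of facets of $\Delta(\cP)$ having exactly $k$ steps, i.e.\ $h_k=\vert\cF_{\cP,k}\vert$ for every $k\geq 0$ (and $h_k=0$ once $k$ exceeds the maximal number of steps occurring in a facet). Then, by Theorem~\ref{Theorem: bijection between facets and canonical positions}, for each $k$ the map $\psi$ is a bijection between $\cF_{\cP,k}$ and $\mathfrak{C}_{\cP}$, the set of canonical positions in $\cP$ of $k$ rooks; hence $h_k=\vert\mathfrak{C}_{\cP}\vert$.

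It then remains only to identify the number of canonical positions of $k$ rooks with $\tilde{r}_k=\vert\tilde{\cR}_k\vert$, the number of $\sim$-classes of $k$-rook configurations of $\cP$. Consider the map sending a canonical position $\cC$ to its $\sim$-class $[\cC]$. By Proposition~\ref{Prop: equivalent class for switching rooks} every $k$-rook configuration is $\sim$-equivalent to a canonical one, so this map is surjective. Granting that it is also injective --- i.e.\ that no $\sim$-class contains two distinct canonical positions --- we conclude $\vert\mathfrak{C}_{\cP}\vert=\tilde{r}_k$ for all $k$, and therefore $h(t)=\sum_{k\geq 0}h_kt^k=\sum_{k\geq 0}\tilde{r}_kt^k=\tilde{r}_{\cP}(t)$, which is the assertion.

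For the injectivity I would argue as in the proof of Proposition~\ref{Prop: equivalent class for switching rooks}, passing to the decomposition $\cP=\cP_1\cup\cP_2$ with $\cP_1\cap\cP_2=\cQ$ into parallelogram sub-polyominoes. A single switch moves two rooks between the diagonal and the anti-diagonal cells of a rectangle of $\cP$, and any such rectangle lies inside $\cP_1$ or inside $\cP_2$; in particular a rook of $\cP_1\setminus\cQ$ is never in switching position with a rook of $\cP_2\setminus\cQ$, as already noted in Case~1 of that proof. Consequently, if $\cC$ and $\cC'$ are canonical with $\cC\sim\cC'$, then the portions of $\cC$ and $\cC'$ lying in $\cP_1$ (resp.\ in $\cP_2$) are, up to further switches, again canonical positions of the parallelogram polyomino $\cP_1$ (resp.\ $\cP_2$). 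Since $\cP_1$ and $\cP_2$ are parallelogram polyominoes, the uniqueness of the canonical representative inside each of them follows from \cite[Proposition~3.11, Lemma~3.12]{Parallelogram Hilbert series}, which is the statement underlying Remark~\ref{Remark: facets vs chains in parallelogram}; combining these local uniqueness assertions forces $\cC=\cC'$.

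The main obstacle is precisely this last injectivity step: while surjectivity onto the set of $\sim$-classes is immediate from Proposition~\ref{Prop: equivalent class for switching rooks}, excluding two distinct canonical positions in one class requires tracking carefully how a sequence of switches acts across the overlap $\cQ$, since a switch may carry a rook of $\cQ$ into $\cP_1\setminus\cQ$ or $\cP_2\setminus\cQ$ and later bring it back, so that the restrictions to $\cP_1$ and $\cP_2$ are preserved only up to further switches. This is handled along the lines of Case~4 of the proof of Proposition~\ref{Prop: equivalent class for switching rooks}; everything else in the argument is a formal consequence of Theorems~\ref{Proposition: The facet less the lower right corner generates the intersection} and~\ref{Theorem: bijection between facets and canonical positions}.
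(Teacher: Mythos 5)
Your proposal is correct and follows exactly the same chain as the paper's proof: part~(2) of Theorem~\ref{Proposition: The facet less the lower right corner generates the intersection} gives $h_k$ as the number of facets with $k$ steps, and Theorem~\ref{Theorem: bijection between facets and canonical positions} converts this into the number of canonical positions of $k$ rooks. The one place you go beyond the paper is in flagging that the identification of canonical positions with $\sim$-classes needs not only surjectivity (Proposition~\ref{Prop: equivalent class for switching rooks}) but also uniqueness of the canonical representative in each class --- a point the paper's proof leaves implicit --- and your sketch via the decomposition into the parallelogram sub-polyominoes $\cP_1$, $\cP_2$ and the uniqueness results of \cite[Proposition~3.11, Lemma~3.12]{Parallelogram Hilbert series} is the right way to close that gap.
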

	
	\begin{proof}
	From (2) of Theorem \ref{Proposition: The facet less the lower right corner generates the intersection} we know that the $j$-th coefficient of the $h$-polynomial of $K[\cP]$ is the number of the facets of  $\Delta(\cP)$ having $j$ steps. From Theorem \ref{Theorem: bijection between facets and canonical positions}, the latter is equal to the number of the canonical configurations in $\cP$ of $j$ rooks, which is the $j$-th coefficient of the switching rook polynomial of $\cP$. 
	\end{proof}

  \noindent It is established that, if $\cP$ is a frame polyomino, then $K[\cP]$ is a Cohen-Macaulay domain so the Castelnuovo-Mumford regularity of $K[\cP]$ is the degree of the $h$-polynomial of $K[\cP]$, which represents the rook number of $\cP$ for Theorem \ref{mainthm}. 

 \begin{coro}\label{Coro: rook number}
     Let $\cP$ be a frame polyomino. Then the Castelnuovo-Mumford regularity of $K[\cP]$ is the rook number of $\cP$.
 \end{coro}
 
 \noindent We conclude this work observing that \cite[Conjecture 3.2]{Parallelogram Hilbert series} is given just from simple polyominoes. Actually, a frame polyomino is a non-simple polyomino so it is natural to think that \cite[Conjecture 3.2]{Parallelogram Hilbert series} could be extended to every polyomino. 
 
 \begin{conj}\label{Conj}
     Let $\cP$ be a polyomino. The $h$-polynomial of $K[\cP]$ is the switching rook polynomial of $\cP$.
 \end{conj}


\end{document}